\newtheorem{proposition}{Proposition}[section]
\newtheorem{theorem}{Theorem}[section]
\newtheorem{definition}{Definition}[section]
\newtheorem{corollary}{Corollary}[section]
\newtheorem{lemma}{Lemma}[section]
\newtheorem{remark}{Remark}[section]
\newtheorem{example}{Example}[section]
 \newcommand{\Dl}{\Delta}
 \newcommand{\om}{\omega}
 \newcommand{\al}{\alpha}
 \newcommand{\ffi}{\varphi}
 \newcommand{\gt}{\geqslant}
 \newcommand{\td}{\tilde}
  \newcommand{\gm}{\gamma}
\newcommand{\lt}{\leqslant}
\newcommand{\nab}{\nabla}
\newcommand{\mto}{\mapsto}
\newcommand{\eps}{\varepsilon}
\newcommand{\ind}{\mathbb I}
\renewcommand{\le}{\lt}
\renewcommand{\ge}{\gt}
\def\Rnu{{\mathbb R}}
\def\aa#1{ \begin{align*} #1 \end{align*} }
\def\aaa#1{ \begin{align} #1 \end{align} }
\def\mm#1{ \begin{multline*} #1 \end{multline*} }
\def\mmm#1{ \begin{multline} #1 \end{multline} }
\numberwithin{equation}{section}
\begin{document}
\markboth{}{}

\title[Path-dependent It\^o formulas under $(p,q)$-variations] {Path-dependent It\^o formulas under finite $(p,q)$-variation regularity}




\author{Alberto Ohashi}

\address{Departamento de Matem\'atica, Universidade Federal da Para\'iba, 13560-970, Jo\~ao Pessoa - Para\'iba, Brazil}\email{alberto.ohashi@pq.cnpq.br; ohashi@mat.ufpb.br}

\author{Evelina Shamarova}

\address{Departamento de Matem\'atica, Universidade Federal da Para\'iba, 13560-970, Jo\~ao Pessoa - Para\'iba, Brazil}\email{evelina@mat.ufpb.br}

\author{Nikolai N. Shamarov}

\address{Lomonosov Moscow State University, Faculty of Mechanics and Mathematics, 119991,
Vorob'evy Gory, Moscow,  Russia}\email{nshamarov@yandex.ru}

\date{\today}

\keywords{Functional stochastic calculus, 2D Young integrals, local-time} \subjclass{Primary: C02; Secondary: G12}

\begin{center}

\end{center}

\begin{abstract}
In this work, we establish pathwise functional It\^o formulas for non-smooth functionals of real-valued continuous semimartingales. Under finite $(p,q)$-variation regularity assumptions in the sense of two-dimensional Young integration theory, we establish a pathwise local-time decomposition

$$F_t(X_t) = F_0(X_0)+ \int_0^t\nabla^hF_s(X_s)ds + \int_0^t\nabla^wF_s(X_s)dX(s) - \frac{1}{2}\int_{-\infty}^{+\infty}\int_0^t(\nabla^w_xF_s)(^{x}X_s)d_{(s,x)}\ell^x(s).$$
Here, $X_t= \{X(s); 0\le s\le t\}$ is the continuous semimartingale path up to time $t\in [0,T]$, $\nabla^h$
is the horizontal derivative, $(\nabla^w_x F_s)(^{x}X_s)$ is a weak derivative of $F$ with respect to the terminal value $x$ of the modified path $^{x}X_s$
and $\nabla^w F_s(X_s) = (\nabla^w_x F_s)(^{x} X_s)|_{x=X(s)}$.
The double integral is interpreted as a space-time 2D-Young integral with differential $d_{(s,x)}\ell^x(s)$, where $\ell$ is the local-time of $X$. Under less restrictive joint variation assumptions on $(\nabla^w_x F_t)(^{x} X_t)$, functional It\^o formulas are established when $X$ is a stable symmetric process. Singular cases when $x\mapsto (\nabla^w_x F_t)(^{x}X_t)$ is smooth off random bounded variation curves are also discussed. The results of this paper extend previous change of variable formulas in Cont and Fourni\'e~\cite{cont} and also Peskir~\cite{peskir}, Feng and Zhao~ \cite{feng} and Elworhty, Truman and Zhao~\cite{elworthy} in the context of path-dependent functionals. In particular, we provide a pathwise path-dependent version of the classical F\"{o}llmer-Protter-Shiryaev~\cite{follmer} formula for continuous semimartingales.
\end{abstract}

\maketitle

\section{Introduction}
The celebrated It\^o formula is the fundamental change of variables formula deeply connected with the concept of quadratic variation of semimartingales. It was initially conceived by Kiyosi It\^o and since then many authors have been extending his formula either relaxing smoothness of the transformation or generalizing to more general stochastic processes.

After It\^o, perhaps the major contribution towards a change of variables formula without $C^2$ assumption was due to the classical works by Tanaka, Wang and Meyer by making a beautiful use of the local time concept earlier introduced by Paul Levy. They proved that if $F:\mathbb{R}\rightarrow \mathbb{R}$ is convex then

$$F(B(t))= F(B(0)) + \int_0^t\nabla_-F(B(s))dB(s) + \frac{1}{2}\int_{-\infty}^{\infty}\ell^x(t)\rho(dx)$$
where $B$ is the Brownian motion, $\ell^x(t)$ is the correspondent local time two-parameter process at $(t,x)\in \mathbb{R}_+\times \mathbb{R}$ and $\rho$ is the Radon measure related to the generalized second-order derivative of $F$. A different extension to absolutely continuous functions with bounded derivatives is due to Bouleau and Yor~\cite{bouleau}

\begin{equation}\label{pe}
F(B(t))=F(B(0)) + \int_0^t\nabla F(B(s))dB(s) - \frac{1}{2}\int_{-\infty}^{\infty}\nabla F(x)d_x\ell^x(t)
\end{equation}
and later on extended by F\"{o}llmer, Protter and Shiryaev~\cite{follmer} and Eisenbaum~\cite{eisenbaum} to functions in the Sobolev space $\text{H}^{1,2}_{loc}(\mathbb{R})$ of generalized functions with weak derivatives in $L^2_{loc}(\mathbb{R})$. In this case, the correction term in~(\ref{pe}) is given by an $d_x\ell^x(t)$-integral in $L^2(\mathbb{P})$-sense where $\mathbb{P}$ is the Wiener measure. See also Bardina and Rovira~\cite{bardina} for the case of elliptic diffusions and Russo and Vallois \cite{russo} for the general semimartingale case composed with $C^1$ functions.


Inspired by the two-dimensional Lebesgue-Stieltjes integration methodology of Elworthy, Truman and Zhao~\cite{elworthy}, a different pathwise argument was introduced by Feng and Zhao~\cite{feng,feng1} based on Young/Rough Path~(see~e.g~\cite{friz}) integration theory. They proved that the local time curves $x\mapsto \ell^x(t)$ of any continuous semimartingale $X$ admits $p$-variation $(p >2)$ almost surely for any $t \ge 0$. In this case, the pathwise rough path integral $\int_{-\infty}^{+\infty}\nabla_- F(x)d_x\ell^x(t)$ can be used as the correction term in the change of variable formula for $X$ as follows

$$F(X(t))-F(X(0)) = \int_0^t\nabla_-F(X(s))dX(s) -\frac{1}{2}\int_{-\infty}^{\infty}\nabla_- F(x)d_x\ell^x(t), 0\le t\le T,$$
where $F:\mathbb{R}\rightarrow \mathbb{R}$ is an absolutely continuous function with left-continuous left derivative $\nabla_-F$
with finite $p$-variation where $1\le p \le 3$.

One important class of semimartingale transformations which cannot be recovered by the previous methods is the following one

\begin{equation}\label{pathDF}
X_t\mapsto F_t(X_t); t\ge 0
\end{equation}
where $X_t=\{X(u); 0\le u\le t\}$ is the semimartingale path up to time $t$ and $F_t:C([0,t];\mathbb{R})\rightarrow\mathbb{R}; t\ge 0$ is a family of functionals defined on the space of real-valued continuous functions $C([0,t];\mathbb{R})$ on the intervals $[0,t]; t\ge 0$. Path-dependent transformations of type (\ref{pathDF}) have been studied in the context of the so-called functional stochastic calculus introduced by Dupire~\cite{dupire} and systematically studied by Cont and Fournie \cite{cont,cont1}. In fact, this approach has been recently studied by many authors in the context of path-dependent PDEs and path-dependent optimal stochastic control problems. We refer the reader to e.g~\cite{touzi1, touzi2, touzi3, LOS, cosso,cosso1, flandoli, buck, keller} for a detailed account on this literature. In this case, the usual space-time derivative operators are replaced by the so-called horizontal and vertical derivative operators, given by $\nabla^h F$ and $\nabla^v F$, respectively. Under suitable regularity conditions ($\mathbb{C}^{1,2}$ in the functional sense), one can show that if $X$ is a continuous semimartingale then

\begin{equation}\label{c12formula}
F_t(X_t) = F_0(X_0)+ \int_0^t\nabla^hF_s(X_s)ds + \int_0^t\nabla^vF_s(X_s)dX(s) + \frac{1}{2}\int_0^t\nabla^{v,2}F_s(X_s)d[X,X](s); t\ge 0
\end{equation}
where $\nabla^{v,2}F$ is the second order vertical derivative  and $[X,X]$ is the standard quadratic variation of $X$. See \cite{cont,dupire} for further details. Under weaker regularity assumptions, Le\~ao, Ohashi and Simas \cite{LOS} have extended (\ref{c12formula}) for functionals $F$ which do not admit second order vertical derivatives. By means of a weaker version of functional calculus, the authors show that path dependent functionals with rough regularity in the sense of $(p,q)$-variation are weakly differentiable and, in particular, they satisfy


\begin{equation}\label{pathFPS}
F_t(B_t) = F_0(B_0) + \int_0^t\mathcal{D}F_s(B_s)dB(s) + \int_0^t\mathcal{D}^{\mathcal{F},h}F_s(B_s)ds -\frac{1}{2}\int_0^t\int_{-\infty}^{+\infty} \partial_x F_s(^{x}B_s)d_{(s,x)}\ell^{x}(s),
\end{equation}
where the operators $(\mathcal{D}F,\mathcal{D}^{\mathcal{F},h}F)$ are similar in nature to $(\nabla^vF(B),\nabla^h F(B))$. The $d_{(s,x)}\ell^x(s)$-integral in (\ref{pathFPS}) is considered in the $(p,q)$-variation sense based on the pathwise 2D Young integral (see~\cite{young1}) where $\ell$ is the Brownian local-time. The integrand is a suitable space derivative of $F$ composed with a ``terminal value modification" $^{x}B_t$ defined by the following pathwise operation: For a given path $\eta_t:C([0,t];\mathbb{R})\rightarrow \mathbb{R}$, then

$$
^{x}\eta_t(u):=\left\{
\begin{array}{rl}
\eta(u); & \hbox{if}~0\le u<t \\
x;& \hbox{if}~u=t.
\end{array}
\right.
$$
In this work, our goal is to study a number of path-dependent It\^o formulas $F(X)$ beyond the smooth case of functionals with $\mathbb{C}^{1,2}$-regularity, where $X$ is an arbitrary semimartingale with continuous paths. Based on the framework of pathwise functional calculus, we establish a pathwise local-time decomposition

\begin{equation}\label{pathFPS1}
F_t(X_t) = F_0(X_0)+ \int_0^t\nabla^hF_s(X_s)ds + \int_0^t\nabla^wF_s(X_s)dX(s) - \frac{1}{2}\int_{-\infty}^{+\infty}\int_0^t(\nabla^w_xF_s)(^{x}X_s)d_{(s,x)}\ell^x(s)
\end{equation}
where $(\nabla^w_x F_s)(^{x}X_s)$
is a weak derivative of $F$ with respect to the terminal value $x$ of the modified path $^{x}X_s$
and $(\nabla^w F_s)(X_s) = (\nabla^w_x F_s)(^{x} X_s)|_{X(s)=x}$. The double integral is interpreted as a space-time 2D-Young integral with differential $d_{(s,x)}\ell^x(s)$ where $\ell$ is the local time of $X$. We study differential representations of form (\ref{pathFPS1}) under a set of assumptions related to rough variations in time and space: \textit{Two-parameter H\"{o}lder control}: For each $L>0$, there exists a constant $C$ such that

\begin{equation}\label{roughc}
|\Delta_i\Delta_j(\nabla^w_x F_{t_i})(^{x_j}c_{t_i})|\leq C|t_i-t_{i-1}|^{1/\tilde{p}}|x_j-x_{j-1}|^{1/\tilde{q}}
\end{equation}
for every partition $\{t_i\}_{i=0}^N\times \{x_j\}_{j=0}^{N^{'}}$ of $[0,T]\times [-L,L]$ and $c\in C([0,T];\mathbb{R})$. Here, $\Delta_j$ is the usual first difference operator and $\tilde{p},\tilde{q}\ge 1$ are constants such that

$$\alpha + \frac{1}{\tilde{p}}>1\quad \hbox{and}\quad\frac{(1-\alpha)}{2+\delta} + \frac{1}{\tilde{q}}>1$$
for some $\alpha \in (0,1)$ and $\delta >0$.
\textit{General}~$(a,b)$-\textit{variation}: In the particular case when $X$ is a continuous symmetric stable process with index $1<\beta\le 2 $, we establish formula (\ref{pathFPS1}) under general $(a,b)$-variation regularity assumption

\begin{equation}\label{gcase}
\sup_{\pi}\Bigg\{\Bigg[\sum_{j=1}^{N'}\Big[\sum_{i=1}^N|\Delta_i\Delta_j  (\nabla^w_x F_{t_i})(^{x_j}c_{t_i})|^a \Big]^{\frac{b}{a}} \Bigg]^{\frac{1}{b}} \Bigg\} < \infty;\quad c\in C([0,T];\mathbb{R})
\end{equation}
for $1\le a < \frac{2\beta}{\beta+1}$ and $1\le b < \frac{2}{3-\beta}$, where sup in (\ref{gcase}) is computed over the set of partitions $\pi$ of $[0,T]\times [-L,L]$ for each $L>0$. Other types of singularities are also discussed when $x\mapsto F_t(^{x}c_t)$ is smooth off path-dependent bounded variation curves.

The formulas presented in this article extend previous versions of path-dependent pathwise It\^o formulas given by Cont and Fournie~\cite{cont} and Dupire~\cite{dupire}. In relation to non-smooth path-dependent cases, we also extend Prop. 8.4 in Le\~ao, Ohashi and Simas~\cite{LOS} in the case when the path-dependent calculus is treated on the basis of functionals with a priori $(p,q)$-variation regularity rather than processes. In \cite{LOS}, the authors show that Wiener functionals with finite $(p,q)$-regularity of the form (\ref{roughc}) are weakly differentiable. In the present work, in the context of pathwise functional calculus, we show that this type of regularity also provides differential representations for path-dependent functionals driven by generic continuous semimartingales.

The level of regularity that we impose on the path-dependent functionals can be compared with the pioneering works of Elworthy, Truman and Zhao~\cite{elworthy}, Peskir \cite{peskir} and Feng and Zhao~\cite{feng,feng1} who obtain extensions of non-path dependent change of variables formulas by means of pathwise arguments based on Lebesgue-Stieltjes/Young/rough path type integrals. Our first result (Theorem~\ref{Ito4}) extends the classical result due to~\cite{peskir, elworthy} for functionals with singularity at path-dependent bounded variation curves. Applications to some path-dependent payoffs in Mathematical Finance are briefly discussed. The change of variable formulas under $(a,b)$-regularity (\ref{gcase}) (Proposition \ref{corPQ}) extend \cite{feng,feng1} with the restriction that the underlying noise is a continuous symmetric stable process. The general semimartingale case is treated in Theorem \ref{thYoung} under more restrictive assumptions on $\nabla^w F$ based on (\ref{roughc}),

One typical class of examples which fits into the assumptions of our theorems can be represented by

$$\int_{-\infty}^{X(t)}Z_t(X_t;y)dy$$
where $X$ is the semimartingale noise which induce the underlying filtration and $Z = \{Z(\cdot; x):C([0,t];\mathbb{R})\rightarrow \mathbb{R}; (t,x)\in[0,T]\times \mathbb{R}\}$ is a family of functionals satisfying some two-parameter variation regularity of the forms (\ref{gcase}) or (\ref{roughc}). This can be seen as a pathwise path-dependent version of the classical F\"{o}llmer-Protter-Shiryaev formula (see \cite{follmer}) for continuous semimartingales.

This paper is organized as follows. Section \ref{molsec} presents basic notations and some preliminary results. In Section \ref{smoothcase}, we investigate It\^o formulas for path-dependent functionals which are regular off path-dependent bounded variation curves. Applications to some running maximum/minimum functionals arising in Mathematical Finance are presented. Section \ref{bivariationsection} presents It\^o formulas under $(p,q)$-variation assumption of the particular form (\ref{roughc}). Section \ref{jointsection} treats the general case (\ref{gcase}) under the assumption that the underlying driving noise is a symmetric stable process.

\section{ Functional Mollification}\label{molsec}

Throughout this paper we are given a stochastic basis $(\Omega, \mathbb{F}, \mathcal{F}, \mathbb{P})$. Here, the set $\Omega:=\{\omega\in C([0,+\infty);\mathbb{R}); \omega(0) = z\}$ is the set of real-valued continuous paths on $\mathbb{R}_+$ which starts at a given $z\in \mathbb{R}$, $X$ is the canonical process, $\mathbb{F} := (\mathcal{F}_t)_{t\ge 0}$ is the natural filtration generated by $X$, $\mathcal{F}$ is a sigma-algebra such that $\mathcal{F}_t \subset \mathcal{F}~\forall t\ge 0$ and $\mathbb{P}$ is the semimartingale measure on $\Omega$. The usual quadratic variation will be denoted by $[X,X]$ and we recall the local time of $X$ is the unique random field $\{\ell^x(t); (x,t)\in \mathbb{R}\times \mathbb{R}_+\}$ which realizes

$$\int_0^t f(X(s))d[X,X](s) = \int_{\mathbb{R}}\ell^x(t)f(x)dx; t \ge 0$$
for every bounded Borel measurable function $f:\mathbb{R}\rightarrow \mathbb{R}$. Frequently, localization procedures will be necessary to handle the path-dependence. For this reason, for a given $M> 0$, we set

$$T_M:=\inf\{t\ge 0; |X(t)|> M\}\wedge T$$
where $0 < T < + \infty$ is a fixed terminal time and $a\wedge b:=min\{a,b\}$. The stopped semimartingale will be denoted by $X^M(t):=X(T_M\wedge t); 0\le t\le T$. We denote $D([0,t];\mathbb{R})$ ($C([0,t];\mathbb{R})$) as the linear space of $\mathbb{R}$-valued c\`adl\`ag (continuous) paths on $[0,t]$ and we set $\Lambda:=\cup_{0\le t\le T}D([0,t];\mathbb{R})$ and $\hat{\Lambda}:=\cup_{0\le t\le T}C([0,t];\mathbb{R})$. In order to make clear the information encoded by a path $x \in D([0,t];\mathbb{R})$ up to a given time $0\le r\le t$, we denote $x_r:= \{x(s): 0 \le s \le r \}$ and the value of $x$ at time $0 \le u \le t$ is denoted by $x(u)$. This notation is naturally extended to processes. Throughout this paper, if $f$ is a real-valued function defined on a metric space $E$, then

$$\Delta_jf(x_j):=f(x_j) - f(x_{j-1})$$
for every sequence $\{x_j\}_{j=0}^m\subset E$. In particular, if $\varphi:[0,T]\times \mathbb{R}\rightarrow \mathbb{R}$ then 

$$\Delta_j \Delta_i \varphi(t_i,x_j):= \varphi(t_i,x_j) - \varphi(t_{i-1},x_j)- \big( \varphi(t_i, x_{j-1}) - \varphi(t_{i-1},x_{j-1})   \big)$$
for any sequence $\{t_{i}\}_{i=0}^m\times \{x_k\}_{k=0}^p\subset[0,T]\times \mathbb{R}$. 

For reader's convenience, let us recall some basic objects of the pathwise functional calculus. We refer the reader to Dupire~\cite{dupire} and Cont and Fournie~\cite{cont,cont1} for further details. Throughout this article, if $w\in \Lambda$, then for a given $\gamma>0$ and $h\in\mathbb{R}$, we denote

$$
w_{t,\gamma}(u) := w(u);0\le u\le t \quad \text{and}~w_{t,\gamma}(u):=w(t); t < u \le t+\gamma,
$$

$$
w^h_{t}(u) :=w(u);0\le u< t\quad \text{and}~w^h_{t}(t):= w(t) + h.
$$
If $x\in\mathbb{R}$, we denote

$$
^{x}w_t(u) :=w(u);0\le u< t\quad \text{and}~^{x}w_t(u):= x; u=t.
$$
A natural metric on $\Lambda$ is given by

$$d_{\infty}\big((t,w); (s,v)   \big):= |t-s| + \sup_{0\le u\le T}|w_{t,T-t}(u) - v_{s,T-s}(u)|;$$
for $(w, v)$ in $\Lambda\times \Lambda$. Throughout this article, a functional $F = \{F_t; 0\le t\le T\}$ is just a family of mappings $F_t:D([0,t];\mathbb{R})\rightarrow \mathbb{R}$ indexed by $t\in[0,T]$. In the sequel, continuity of functionals is defined as follows (see e.g~\cite{cont}):

\begin{definition}
A functional $F = \{F_t; 0\le t\le T\}$ is said to be $\Lambda$-continuous at $c\in \Lambda$ if $\forall \varepsilon >0$, there exists $\delta(c,\varepsilon)=\delta>0$ such that if $\eta\in \Lambda$ satisfies $d_{\infty}((t,c);(\eta,y))< \delta$ then $|F_t(c) - F_{s}(\eta)|<\varepsilon $. We say that $F$ is $\Lambda$-continuous if it is continuous for each $c\in\Lambda$.

\end{definition}

We recall the vertical derivative of a functional $F\in \Lambda$ is defined as

\begin{equation}\label{verdef}
\nabla^{v}F_t(c_t):= \lim_{h\rightarrow 0}\frac{F_{t}(c^h_t) - F_t(c_t)}{h}
\end{equation}
whenever the right-hand side of (\ref{verdef}) exists for every $c\in \Lambda$. We define $\nabla^{v, (2)}F:= \nabla^{v} (\nabla^v F)$ whenever this operation exists. The horizontal derivative is defined by the following limit

\begin{equation}\label{hordef}
\nabla^{h}F_t(c_t):= \lim_{\gamma\rightarrow 0^+}\frac{F_{t+\gamma}(c_{t,\gamma}) - F_t(c_t)}{\gamma}
\end{equation}
whenever the right-hand side of (\ref{hordef}) exists for every $c\in \Lambda$.

An $\mathbb{F}$-adapted continuous process $Y$ may be represented by the identity

\begin{equation}\label{repfunc}
Y(t) = \hat{F}_t(X_t); 0\le t\le T,
\end{equation}
where $\hat{F} =\{\hat{F}_t; 0\le t\le T\}$ is a family of functionals $\hat{F}_t:C([0,t];\mathbb{R})\rightarrow \mathbb{R}$ representing the dependence of $Y$.

Since $Y$ is non-anticipative, $Y(\omega,t)$ only depends on the restriction of $\omega$ over $[0,t]$. In order to perform the standard pathwise functional calculus in the sense of Dupire~\cite{dupire} and Cont and Fournie~\cite{cont}, one has to assume there exists a family of functionals $F = \{F_t; 0\le t\le T\}$ defined on $\Lambda$ which is consistent to $\hat{F}$ in the sense that

$$F_t(c_t) = \hat{F}_t(c_t)\quad \forall c\in \hat{\Lambda}.$$
Indeed, the concept of vertical derivative forces us to assume this. Throughout this article, whenever we write $Y = F(X)$ for $F$ defined on $\Lambda$, it is implicitly assumed that $F$ is a consistent extension of a functional representation $\hat{F}$ which realizes (\ref{repfunc}). This motivates the following definition.

\begin{definition}
A non-anticipative functional is a family of functionals $F = \{F_t; 0\le t\le T\}$ where

$$F_t:D([0,t]; \mathbb{R})\rightarrow \mathbb{R}; c\mapsto F_t(c_t)$$
is measurable w.r.t the canonical filtration $\mathcal{B}_t$ in $D([0,t];\mathbb{R})$ for each $t\in [0,T]$.
\end{definition}

In the sequel, let $\mathbb{C}^{1,2}$ be the space of functionals $F$ which are $\Lambda$-continuous and it has $\Lambda$-continuous derivatives $\nabla^h F, \nabla^{v,(i)}F$ for $i=1,2 $. The above notion of continuity is enough to apply the standard functional stochastic calculus techniques in the smooth case $F\in \mathbb{C}^{1,2}$. However, in order to employ mollification techniques to treat non-smooth dependence (in the sense of differentiation) of $F$ w.r.t $X$, we need the following notion of continuity.

\begin{definition}
We say that a family of functionals $\{H^x:\Lambda \rightarrow \mathbb{R}; x\in \mathbb{R}\}$ is state-dependent $\Lambda$-continuous
at $v\in \Lambda$ if there exists $\phi\in L^1_{loc}(\mathbb{R})$ such that for every $\varepsilon >0$, there exists $\delta>0$ such that

$$d_{\infty}\big((t{'}, c); (t, v)\big) < \delta\Longrightarrow |H^x_{t{'}}(c_{t'})  - H^x_t(v_{t})|\le \varepsilon \phi(x); \forall x\in \mathbb{R}.$$
When the family $\{H^x; x\in \mathbb{R}\}$ is state-dependent $\Lambda$-continuous for every $v\in \Lambda$, we say that it is state-dependent $\Lambda$-continuous.
\end{definition}

\begin{remark}
If $\{H^x; x\in\mathbb{R}\}$ is state-dependent $\Lambda$-continuous,then it is $\Lambda$-continuous for each $x\in \mathbb{R}$.
\end{remark}

\noindent \textbf{Example:} Let us give an example of a state-dependent $\Lambda$-continuous family of functionals. In the sequel, $(x)^+:= max\{x;0\}, x\in \mathbb{R}$. For a given constant $K$, we consider $F_t(c_t) = \Big(\underset{ 0\le s\le t}{\operatorname{sup}}c(s)-K\Big)^+$. Then, $F_t(^{x}c_t) = \Big( \underset{ 0\le s \le t}{\operatorname{sup}}^{x}c(s) - K \Big)^+$ for each $x\in \mathbb{R}, c\in \Lambda$ and we readily see that the family $c\mapsto F(^{x}c); x\in \mathbb{R}$ is state-dependent $\Lambda$-continuous.

\

For the remainder of this paper it will be convenient to use the following notation: For a given family of functionals $F = \{F_t; 0\le t\le T\}$, we define

\aaa{
\label{new-f}
\mathcal{F}^x(c_t):=F_t(^{x}c_t)
}
for $c\in \Lambda$ and $x\in \mathbb{R}$. This notation will be useful to compute horizontal derivatives from a state-dependent $\Lambda$-continuous family of the form $\{\mathcal{F}^x; x\in \mathbb{R}\}$.

The strategy to get functional It\^o formulas under non-smooth conditions will be based on path-dependent mollification techniques on the state of the functional. Indeed, in this article we are only interested in relaxing vertical smoothness of path-dependent functionals. In this case, it will be sufficient for us to deal with one parameter mollification.

For a given non-negative smooth function $\rho\in C^{\infty}_c(\mathbb{R})$ such that $supp~\rho\subset (0,2)$, $\int_{\mathbb{R}}\rho(x)ds=1$, we set $\rho_n(x):= n\rho(nx); x\in \mathbb{R}; n\ge 1.$ If $x\mapsto \mathcal{F}^x_t(c_t) \in L^1_{loc}(\mathbb{R})$ for every $c\in\Lambda$, then we define

\aaa{
\label{mol}
F^n_t(c_t; x):=  \big(\rho_n\star \mathcal{F}^\cdot_t(c_t) \big)(x); x\in \mathbb{R}, c\in \Lambda, t\in [0,T],
}
where $\star$ denotes the usual convolution operation on the real line. From this convolution operator, we define the following non-anticipative functional

$$F^n_t(c_t) := \int_{\mathbb{R}}\rho_n(c(t)-y)\mathcal{F}^y_t(c_t)dy; 0\le t\le T.$$
One should notice that $F^n_t(^{x}c_t) = F^n(c_t; x); c\in \Lambda,~x\in\mathbb{R}$. In the sequel, we need a notion of boundedness to treat path-dependent functionals.

\begin{definition} \label{lbp}
We say that a family of functionals $F = \{F_t; 0\le t\le T\}$ is boundedness-preserving if for every compact subset $K$ of $\mathbb{R}$, there  exist $C_K > 0$ such that $|F_\cdot(c_\cdot)| \le C_K$ for every $c_\cdot \in D([0,\cdot]; K)$. A family of functionals $H^x:\Lambda \rightarrow \mathbb{R}; x\in \mathbb{R}$ is state boundedness-preserving if for every compact sets $K_1, K_2\subset \mathbb{R}$, there exists a constant $C_{K_1,K_2}>0$ such that

$$|H^x_\cdot(c_{\cdot})|\le C_{K_1,K_2}\quad \forall c\in D([0,\cdot]; K_1)~\text{and}~\forall x\in K_2.$$
\end{definition}

Let us now introduce the following hypotheses


\

\noindent \textbf{Assumption A1:}

\noindent (i) The family of functionals $\{\mathcal{F}^y;y\in \mathbb{R}\}$ is state-dependent $\Lambda$-continuous and state-boundedness-preserving.

\noindent (ii) $x\mapsto  \mathcal{F}^x_t(c_t)$ is a continuous map for every $c\in \Lambda$ and $t\in [0,T]$.

\noindent (iii) $x\mapsto  \mathcal{F}^x_t(c_t)$ has weak derivative for every $c\in \Lambda$ and $t\in [0,T]$.

\

\noindent \textbf{Assumption A2:} For each $y\in \mathbb{R}$, $\mathcal{F}^y$ has horizontal derivative $\nabla^h\mathcal{F}^y(c)~\forall c\in \Lambda$. Moreover, the family $\{\nabla^h\mathcal{F}^y; y\in\mathbb{R}\}$ is state boundedness-preserving. The map $y\mapsto \nabla^h \mathcal{F}^y_t(c_t)$ is continuous for every $c\in \Lambda$. The family of functionals $\{\nabla^h\mathcal{F}^y; y\in \mathbb{R}\}$ is state-dependent $\Lambda$-continuous.

\

Throughout this paper, the weak derivative of $x\mapsto \mathcal{F}^x_t(c_t)$ will be denoted by $(\nabla^w_x F_t)(^{x}c_t)$ and we set

$$\nabla^w F_t(c_t):=(\nabla^w_x F_t)(^{x}c_t)|_{x=c(t)}; c\in \Lambda.$$
Of course, $(\nabla^w_x F_t)(^{\cdot}c_t)\in L^1_{loc}(\mathbb{R})$ is uniquely specified by the property

$$\int_{\mathbb{R}}\mathcal{F}^x_t(c_t)\varphi'(x)dx = -\int_{\mathbb{R}}(\nabla^w_x F_t)(^{x}c_t)\varphi(x)dx;~c\in \Lambda,$$
for every real-valued smooth function $\varphi \in C^1_{c}(\mathbb{R})$.

\

If Assumptions A1.(iii) holds, then $F^n_t(^{\cdot}c_t)\in C^\infty(\mathbb{R})~\forall c\in \Lambda, t\in [0,T], n\ge 1$ and integration by parts yields

$$\nabla_x F^n_t(^{x}c_t) = \int_{\mathbb{R}}\rho_n(x-y)(\nabla^w_yF_t)(^{y}c_t)dy.$$
Moreover, the vertical derivative of functional mollification is given by

$$\nabla^{v,i}F^n_t(c_t) =\nabla^{i}_x F^n_t(^{x}c_t)|_{x=c(t)}$$
for $i=1, 2$. To compute the horizontal derivative of mollifiers, the following simple lemma will be useful.

\begin{lemma}
\label{lem1}
Let $\mathcal{A}$ be a parameter set, and let $f: \mathcal A\times \mathbb{R} \to \Rnu$ be a function, continuous on the second variable and such that for each $a\in \mathcal{A}$, there exists the right derivative $\nab^+_x f(a,x); x\in \mathbb{R}$ which is bounded on $\mathcal A\times \mathbb{R}$. Suppose for each $(a,x)\in \mathcal{A}\times\mathbb{R}$, there exists $a_x \in \mathcal A$ such that $f(a,x+h) = f(a_x,h)$. Then, the ratio $\frac{f(a,x+h) - f(a,x)}{h}$ is bounded over $\mathcal{A}\times \mathbb{R}\times\mathbb{R}_+$. The analogous result also holds for the ratio $\frac{f(a,x-h) - f(a,x)}{-h}$ under boundedness condition on $\nabla^-f(a,x)$ over $\mathcal{A}\times \mathbb{R}$.

\end{lemma}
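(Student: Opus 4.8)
The plan is to exploit the self-similarity hypothesis $f(a,x+h)=f(a_x,h)$ to reduce the difference quotient at an arbitrary point $x$ to a difference quotient based at $0$, and then estimate the latter via the mean value theorem using the assumed uniform bound on the right derivative. First I would fix $a\in\mathcal A$, $x\in\mathbb R$ and $h>0$, and write, using the hypothesis both at the base point $x$ (giving $f(a,x+h)=f(a_x,h)$) and at $h=0$ (giving $f(a,x)=f(a,x+0)=f(a_x,0)$),
\[
\frac{f(a,x+h)-f(a,x)}{h}=\frac{f(a_x,h)-f(a_x,0)}{h}.
\]
Thus it suffices to bound $\bigl|f(b,h)-f(b,0)\bigr|/h$ uniformly over all $b\in\mathcal A$ and all $h>0$, which removes the dependence on $x$ entirely.

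Next I would handle the reduced quotient. Since $f(b,\cdot)$ is continuous on $\mathbb R$ and possesses a right derivative $\nabla^+_x f(b,\cdot)$ everywhere, I would invoke the one-sided mean value inequality: a continuous function on $[0,h]$ with a right derivative at every point of $[0,h)$ bounded in absolute value by a constant $C$ satisfies $|f(b,h)-f(b,0)|\le C h$. (This is the standard consequence of the mean value theorem for Dini derivatives; it does not require differentiability, only continuity plus a one-sided derivative controlled by $C$.) Taking $C:=\sup_{\mathcal A\times\mathbb R}|\nabla^+_x f|$, which is finite by hypothesis, gives $\bigl|f(b,h)-f(b,0)\bigr|/h\le C$ for every $b$ and every $h>0$, hence the original ratio is bounded by $C$ over $\mathcal A\times\mathbb R\times\mathbb R_+$.

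For the companion statement about $\dfrac{f(a,x-h)-f(a,x)}{-h}$, I would proceed symmetrically: write $f(a,x)=f(a,(x-h)+h)=f(a_{x-h},h)$ and $f(a,x-h)=f(a,(x-h)+0)=f(a_{x-h},0)$, so the quotient equals $\bigl(f(a_{x-h},h)-f(a_{x-h},0)\bigr)/h$, and then apply the one-sided mean value inequality using the boundedness of $\nabla^- f$ (equivalently, after the reduction, of the relevant one-sided derivative) over $\mathcal A\times\mathbb R$. The only point requiring a little care — and the main, though modest, obstacle — is making sure the reduction is applied with the correct base points ($x$ and $h=0$ in the first case, $x-h$ and $h=0$ in the second) so that the self-similarity identity genuinely collapses the two-point difference to one anchored at $0$; once that bookkeeping is done, the estimate is immediate from the mean value inequality for one-sided derivatives.
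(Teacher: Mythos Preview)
Your argument is correct, and it takes a genuinely different route from the paper's own proof. You use the self-similarity hypothesis \emph{once}, up front, to reduce the general difference quotient to one based at $0$, and then invoke the standard one-sided mean value inequality (for a continuous function with uniformly bounded right Dini derivative) as a black box. The paper, by contrast, does not cite the mean value inequality; instead it fixes $(a,x)$, sets $C=1+\sup_{\mathcal A\times\mathbb R}|\nabla^+_x f|$, considers the set $H(a,x)=\{h\ge 0:|f(a,x+h)-f(a,x)|\le Ch\}$, and argues by a maximality/extension trick: if $L_{a,x}$ is the length of the maximal interval $[0,L_{a,x}]\subset H(a,x)$ and $L_{a,x}<\infty$, then the self-similarity identity is used to bridge past $L_{a,x}$ by a small positive $k\in H(a_{L_{a,x}},x)$, contradicting maximality. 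In effect the paper reproves a tailored form of the one-sided mean value inequality from scratch, with the shift property doing the work of the extension step, whereas your argument separates the two ingredients cleanly. Your approach is shorter and conceptually more transparent if one is willing to quote the Dini-derivative mean value inequality; the paper's approach is more self-contained. One small remark on your treatment of the left-derivative case: after the reduction you obtain $\bigl(f(a_{x-h},h)-f(a_{x-h},0)\bigr)/h$, and the bound indeed follows because a continuous function on $[0,h]$ with \emph{left} derivative bounded on $(0,h]$ also satisfies the Lipschitz estimate; you allude to this but it is worth stating explicitly, since after the shift the increment runs forward from $0$ to $h$ while the available derivative bound is on $\nabla^-f$.
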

\begin{proof}
Let us fix an arbitrary pair $(a,x)\in \mathcal{A}\times \mathbb{R}$ and we define the set
\aa{
H(a,x) = \{h\ge 0: |f(a,x+h) - f(a,x)| \lt Ch\},
}
where $C = 1 + \sup_{a\in \mathcal A, x\in \mathbb{R}}|\nab^+_x f(a,x)|$. The set $H(a,x)$ is closed and it contains a closed interval $[0,l_{a,x}]$. Let $L_{a,x}$ be the length of the maximal interval of the form $[0,l_{a,x}]$
contained in $H(a,x)$. Suppose $L_{a,x} < \infty$.
Take $h = L_{a,x} + k$, where $k\in H(a_{L_{a,x}},x)$, and $a_{L_{a,x}}$ is such that
$f(a,L_{a,x} + y) = f(a_{L_{a,x}}, y)$, $y\in\Rnu$. We have
\mm{
|f(a,x+h) - f(a,x)| \le |f(a,L_{a,x}+x+k) - f(a,L_{a,x}+x)|  + |f(a,L_{a,x}+x) - f(a,x)| \\
= |f(a_{L_{a,x}},x+k) - f(a_{L_{a,x}},x)| + |f(a,x+L_{a,x}) - f(a,x)|
\lt Ck + CL_{a,x} = Ch.
}
Thus, $L_{a,x}$ is not maximal and we have a contradiction. This implies that $L_{a,x}=\infty$ and therefore, the ratio
$\frac{f(a,x+h) - f(a,x)}{h}$ is bounded on $\mathcal A\times \mathbb{R}\times\mathbb{R}_+$. The proof for the ratio related to the left-derivative is obviously the same.
\end{proof}

\begin{remark}
\label{rem1}
\rm
Lemma \ref{lem1} also holds for functions $f: \mathcal A\times [A,B] \to \Rnu$, where $[A,B]\subset \Rnu$.
Indeed, extend the function $f$ to the whole real line by setting $f(a,x) = f(a,A)$ for $x<A$ and $f(a,x) = f(a,B)$ for $x>B$.
\end{remark}

\begin{remark}
\label{rem2}
\rm
Note that the horizontal derivative $\nab^h F_t(c_t)$ can be regarded as
the right derivative $\nab^+_\gm F_{t+\gm}(c_{t,\gm})$ of the function
$\gm\mto F_{t+\gm}(c_{t,\gm})$ at point $\gamma=0$, where the pair $(t,c) \in [0,T]\times \Lambda$ is interpreted
as a parameter. The assumption $f(a,x+h) = f(a_x,h)$ in Lemma \ref{lem1} is interpreted in the setup of functional calculus as follows:

$$F_{t+\gm+h}(c_{t,\gm+h}) = F_{(t+\gm)+h}(\td c_{t+\gm,h}),$$
where $\td c_{t+\gm} = c_{t,\gm}.$ The same remark holds for the functional $\mathcal{F}^y_t(c_t)$ with parameters $(t,c,y)\in [0,T]\times \Lambda \times U$ in some bounded open subset $U\subset\mathbb{R}$.
\end{remark}

\begin{lemma}\label{horderm}
Assume that for each $y\in \mathbb{R}$, $\mathcal{F}^y$ is $\Lambda$-continuous, $\mathcal{F}^y$ has horizontal derivative and the family $\{\nabla^h\mathcal{F}^y; y\in\mathbb{R}\}$ is state boundedness-preserving. Then, for each $n\ge 1$, $t\in [0,T]$,
and $c\in \Lambda$ taking values in a compact subset of $\mathbb{R}$, we have

\begin{equation}\label{hnder}
\nabla^h F^n_t(c_t) = \int_{\mathbb{R}}\rho_n(c(t)-y)\nabla^h \mathcal{F}^y_t(c_t)dy = \big(\rho_n\star \nabla^h \mathcal{F}^\cdot_t(c_t)\big)(c(t))
\end{equation}
\end{lemma}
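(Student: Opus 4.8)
\textbf{Proof plan for Lemma \ref{horderm}.}
The plan is to compute the horizontal derivative of $F^n_t(c_t)$ directly from its definition as an incremental ratio, interchange the limit with the convolution integral, and identify the pointwise limit of the integrand with $\nabla^h\mathcal{F}^y_t(c_t)$. Concretely, fix $n\ge 1$, $t\in[0,T)$ and $c\in\Lambda$ taking values in a compact set $K_1\subset\mathbb{R}$. Since $\operatorname{supp}\rho_n$ is compact, the integral defining $F^n_t(c_t)=\int_\mathbb{R}\rho_n(c(t)-y)\mathcal{F}^y_t(c_t)\,dy$ runs over a fixed compact set $K_2$ of $y$'s (depending only on $n$ and $K_1$). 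First I would write, for $\gamma>0$ small,
\[
\frac{F^n_{t+\gamma}(c_{t,\gamma}) - F^n_t(c_t)}{\gamma}
= \int_{\mathbb{R}}\rho_n\big(c(t)-y\big)\,\frac{\mathcal{F}^y_{t+\gamma}(c_{t,\gamma}) - \mathcal{F}^y_t(c_t)}{\gamma}\,dy,
\]
using that $c_{t,\gamma}(t+\gamma) = c(t)$, so the mollifier weight $\rho_n(\,\cdot\, - y)$ is evaluated at the same point $c(t)$ for both $F^n_{t+\gamma}(c_{t,\gamma})$ and $F^n_t(c_t)$; this is the key algebraic simplification that lets the difference quotient pass under the integral sign cleanly.

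Next I would justify passing the limit $\gamma\to 0^+$ inside the integral by dominated convergence. For the pointwise convergence of the integrand: for each fixed $y$, the hypothesis that $\mathcal{F}^y$ has a horizontal derivative gives $\frac{\mathcal{F}^y_{t+\gamma}(c_{t,\gamma}) - \mathcal{F}^y_t(c_t)}{\gamma}\to\nabla^h\mathcal{F}^y_t(c_t)$ as $\gamma\to 0^+$. For the domination: here I would invoke Lemma \ref{lem1} together with Remark \ref{rem2}. Taking the parameter set $\mathcal{A}$ to encode the pair $(t,c)$ (more precisely the family of ``shifted'' paths of the form $c_{t,\gamma}$) and the extra bounded parameter $y\in K_2$, the map $f\big((t,c,y),\gamma\big):=\mathcal{F}^y_{t+\gamma}(c_{t,\gamma})$ has right derivative in $\gamma$ equal to $\nabla^h\mathcal{F}^y_{t+\gamma}(c_{t,\gamma})$, which by the state-boundedness-preserving assumption on $\{\nabla^h\mathcal{F}^y; y\in\mathbb{R}\}$ is bounded uniformly over the relevant range of paths (all staying in a fixed compact set, since $c$ is constant on $[t,t+\gamma]$) and over $y\in K_2$. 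The self-similarity hypothesis $f(a,x+h)=f(a_x,h)$ of Lemma \ref{lem1} holds in the functional setting precisely as spelled out in Remark \ref{rem2}: $\mathcal{F}^y_{t+\gamma+h}(c_{t,\gamma+h}) = \mathcal{F}^y_{(t+\gamma)+h}(\tilde c_{t+\gamma,h})$ with $\tilde c_{t+\gamma}=c_{t,\gamma}$. Hence Lemma \ref{lem1} yields a bound $\big|\frac{\mathcal{F}^y_{t+\gamma}(c_{t,\gamma}) - \mathcal{F}^y_t(c_t)}{\gamma}\big|\le C$ uniformly in $\gamma>0$ and $y\in K_2$, and since $\rho_n(c(t)-\cdot)$ is integrable with compact support, $C\,\rho_n(c(t)-y)\mathbb{I}_{K_2}(y)$ is an integrable dominating function. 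Dominated convergence then gives
\[
\nabla^h F^n_t(c_t) = \int_{\mathbb{R}}\rho_n\big(c(t)-y\big)\,\nabla^h\mathcal{F}^y_t(c_t)\,dy = \big(\rho_n\star\nabla^h\mathcal{F}^\cdot_t(c_t)\big)(c(t)),
\]
which is exactly \eqref{hnder}.

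The main obstacle is not the convergence argument itself but making the application of Lemma \ref{lem1} rigorous: one must check that the ``parameter'' $\mathcal{A}$ can indeed be chosen so that (a) the self-similarity relation $f(a,x+h)=f(a_x,h)$ holds, and (b) the right $\gamma$-derivative is bounded over all of $\mathcal{A}\times\mathbb{R}_+$ (after restricting, via Remark \ref{rem1}, to $\gamma$ in a bounded interval $[0,T-t]$, or extending $f$ constantly beyond it). Both points are handled by the observations in Remarks \ref{rem1} and \ref{rem2}: the concatenation operation on paths is associative in the required sense, and the state-boundedness-preserving property of $\{\nabla^h\mathcal{F}^y;y\in\mathbb{R}\}$ — note that along the increment the path $c_{t,\gamma}$ is frozen at the value $c(t)$, so it stays in the compact set containing the range of $c$ — supplies the needed uniform bound on $\nabla^h\mathcal{F}^y$. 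A minor point to address is the endpoint $t=T$, where only a one-sided statement is meaningful; this is dealt with by the usual convention for the horizontal derivative at the terminal time. Everything else is routine: the interchange of the finite difference with the convolution integral is immediate because the mollifier argument $c(t)$ is unaffected by the horizontal perturbation, and continuity of $y\mapsto\nabla^h\mathcal{F}^y_t(c_t)$ (assumed) ensures the right-hand integral is well defined.
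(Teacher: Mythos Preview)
Your proposal is correct and follows essentially the same route as the paper: write the horizontal difference quotient of $F^n$ as the convolution of $\rho_n(c(t)-\cdot)$ against the difference quotient of $\mathcal{F}^y$, invoke Lemma~\ref{lem1} (via Remarks~\ref{rem1} and~\ref{rem2}) together with the state boundedness-preserving assumption to get a uniform bound on that inner ratio, and pass to the limit by bounded convergence. The only small point you gloss over is that Lemma~\ref{lem1} requires continuity of $\gamma\mapsto \mathcal{F}^y_{t+\gamma}(c_{t,\gamma})$, which the paper checks from the $\Lambda$-continuity of $\mathcal{F}^y$; also, continuity of $y\mapsto\nabla^h\mathcal{F}^y_t(c_t)$ is \emph{not} among the lemma's hypotheses (you claim it is ``assumed''), but this is harmless since boundedness on the compact support of $\rho_n$ already makes the right-hand integral well defined.
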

\begin{proof}
We fix $t\in [0,T]$ and a path $c\in \Lambda$ over $[0,t]$ such that $c(u)\in K; 0\le u\le t$, where $K$ is a compact set. We also fix $n\ge 1$. Indeed, by the very definition
\aaa{
\label{f12}
\frac{F^n_{t+\gm}(c_{t,\gm}) - F^n_t(c_t)}{\gm} = \int_{c(t)-\frac2{n}}^{c(t)} \rho_n(c(t)-y) \, \Bigg[\frac{\mathcal{F}^y_{t+\gm}(c_{t,\gm}) - \mathcal{F}^y_t(c_t)}{\gm}\Bigg]\, dy
}
for $\gamma>0$.
We claim that the ratio $\frac{\mathcal{F}^y_{t+\gm}(c_{t,\gm}) - \mathcal{F}^y_t(c_t)}{\gm}$ is bounded over $(\gamma, y)\in [0,T-t]\times [c(t)-\frac{2}{n},c(t)]$.
Indeed, we shall apply Lemma \ref{lem1} to the function $\gm\mto \mathcal{F}^y_{t+\gm}(c_{t,\gm})$ defined on $[0,T-t]$ regarding
$y\in  [c(t)-\frac{2}{n},c(t)]$
as a parameter (see Remark \ref{rem2}). From the $\Lambda$-continuity of $\mathcal{F}^y$, one can easily check that
$$\gamma\mapsto\mathcal{F}^y_{t+\gm}(c_{t,\gm})$$
is continuous over $[0,T-t]$. Extend the function $\gamma\mapsto \mathcal{F}^y_{t+\gamma}(c_{t,\gamma})$ to $\Rnu$  by the constant values that it attains at the end points of $[0,T-t]$. As we already mentioned in Remark \ref{rem2}, for each $y\in \mathbb{R}$, the right derivative $\nabla^+_\gm \mathcal{F}^y_{t+\gm}(c_{t,\gm})$ at $\gamma_0$ is the horizontal derivative $\nabla^h\mathcal{F}^y_{t+\gamma_0}(c_{t,\gamma_0})$ for $\gamma_0\in [0,T-t]$. By the state boundedness-preserving assumption, $\nabla^+_\gm \mathcal{F}^y_{t+\gm}(c_{t,\gm})$ is bounded over $[0,T-t]\times [c(t)-\frac{2}{n},c(t)]$. Again, taking into account Remark \ref{rem2}, we conclude that we are in the situation of Lemma \ref{lem1}. Bounded convergence theorem allows us to take the limit into the integral sign in \eqref{f12} as $\gm\to 0$ which provides (\ref{hnder}). This completes the proof.
\end{proof}

\begin{lemma}\label{preIto}
If $F$ is a non-anticipative functional satisfying Assumptions A1(i) and A2, then for each positive integer $n\ge 1$, we have
\aaa{
\label{preIto1}
F^n_t(X_t) = F^n_0(X_0) + \int_0^t\nabla^h F^n_s(X_s)ds + \int_0^t\nabla^v F^n_s(X_s)dX(s)
+ \frac{1}{2}\int_0^t\nabla^{v,2}F^n_s(X_s)d[X,X](s)~\text{a.s.}
}
for $0\le t\le T.$
\end{lemma}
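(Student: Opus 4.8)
The plan is to prove the It\^o formula for the mollified functional $F^n$ by checking that $F^n$ belongs to the smooth class $\mathbb{C}^{1,2}$, so that the classical functional It\^o formula~\eqref{c12formula} applies verbatim. First I would record the three pieces of regularity that the mollifier enjoys. Since $x\mapsto \mathcal{F}^x_t(c_t)$ is locally integrable (Assumption A1(i) gives state-boundedness-preserving, in particular local boundedness), the convolution $F^n_t(^{x}c_t) = (\rho_n\star\mathcal{F}^\cdot_t(c_t))(x)$ is $C^\infty$ in $x$, and by the standard differentiation-under-the-convolution argument one gets $\nabla^{v,i}F^n_t(c_t) = \nabla^i_x F^n_t(^{x}c_t)|_{x=c(t)}$ for $i=1,2$, as already noted in the text before Lemma~\ref{lem1}. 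Second, Lemma~\ref{horderm} — whose hypotheses are exactly Assumptions A1(i) and A2 restricted to compactly-valued paths — provides the horizontal derivative $\nabla^h F^n_t(c_t) = (\rho_n\star\nabla^h\mathcal{F}^\cdot_t(c_t))(c(t))$. Third, I would verify $\Lambda$-continuity of $F^n$ and of its derivatives $\nabla^h F^n$, $\nabla^v F^n$, $\nabla^{v,2}F^n$: this follows by combining the state-dependent $\Lambda$-continuity of $\{\mathcal{F}^y\}$ and of $\{\nabla^h\mathcal{F}^y\}$ (domination by $\phi\in L^1_{loc}$, so that the convolution integrals converge uniformly on the relevant compact ranges) with the continuity of $x\mapsto c(t)$ in the $d_\infty$-metric and the smoothness of $\rho_n$; boundedness-preserving of the derivatives comes from the state-boundedness-preserving hypotheses. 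Hence $F^n\in\mathbb{C}^{1,2}$.

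Next I would address the localization subtlety: Lemma~\ref{horderm} and the boundedness statements are phrased for paths taking values in a fixed compact set, whereas $X$ is a generic continuous semimartingale with unbounded range. So I would run the argument on the stopped process $X^M(t) = X(T_M\wedge t)$, which lives in $[-M,M]$. For each fixed $M$, the path $X^M$ takes values in the compact $[-M,M]$, so $F^n\in\mathbb{C}^{1,2}$ along such paths and the smooth functional It\^o formula yields
\aa{
F^n_{t\wedge T_M}(X^M_{t\wedge T_M}) = F^n_0(X_0) + \int_0^{t\wedge T_M}\nabla^h F^n_s(X^M_s)\,ds + \int_0^{t\wedge T_M}\nabla^v F^n_s(X^M_s)\,dX(s) + \frac12\int_0^{t\wedge T_M}\nabla^{v,2}F^n_s(X^M_s)\,d[X,X](s).
}
Since $F^n_s(X^M_s) = F^n_s(X_s)$ and likewise for every derivative on $\{s\le T_M\}$, and since $T_M\uparrow T$ a.s.\ as $M\to\infty$ by continuity of $X$, I would let $M\to\infty$: the stochastic integral and the two Lebesgue--Stieltjes integrals converge (the latter two pathwise, the first in probability along a subsequence, or by a standard localization argument for stochastic integrals), giving~\eqref{preIto1} a.s.\ for every $0\le t\le T$.

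The main obstacle I anticipate is not the convergence as $M\to\infty$ — that is routine localization — but the careful verification that $F^n$ really lands in $\mathbb{C}^{1,2}$, i.e.\ that all four quantities $F^n, \nabla^h F^n, \nabla^v F^n, \nabla^{v,2}F^n$ are $\Lambda$-continuous. The delicate point is that the base point of the convolution, namely $c(t)$ in $(\rho_n\star\,\cdot\,)(c(t))$, moves with the path, so a perturbation $(t',c')$ close to $(t,c)$ in $d_\infty$ shifts both the functional arguments $\mathcal{F}^y_{t'}(c'_{t'})$ (controlled by state-dependent $\Lambda$-continuity, uniformly in $y$ on a slightly enlarged compact window via the $L^1_{loc}$-domination) and the evaluation point $c'(t')$ (controlled by continuity of $\rho_n$ and its derivatives, which are bounded with compact support). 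One must combine these two effects; the boundedness-preserving and state-boundedness-preserving assumptions are what make the dominating functions integrable so that the bounded/dominated convergence theorem closes the estimate. Once $\mathbb{C}^{1,2}$-membership along compactly-valued paths is established, the rest is bookkeeping.
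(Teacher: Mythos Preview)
Your proposal is correct and follows essentially the same route as the paper: establish that $F^n\in\mathbb{C}^{1,2}$ by checking $\Lambda$-continuity of $F^n$, $\nabla^{v,i}F^n$ (via the explicit convolution formulas and state-dependent $\Lambda$-continuity of $\{\mathcal{F}^y\}$) and of $\nabla^h F^n$ (via Lemma~\ref{horderm} and state-dependent $\Lambda$-continuity of $\{\nabla^h\mathcal{F}^y\}$), then invoke the smooth functional It\^o formula. The paper handles localization by declaring at the outset ``by routine stopping arguments we may assume $X$ is bounded,'' whereas you stop explicitly and pass $M\to\infty$ at the end; these are equivalent, and the paper's proof even writes out the same triangle-inequality splittings for $\Lambda$-continuity that you describe in words.
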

\begin{proof}
Let us fix $n\ge 1$. By routine stopping arguments, we may assume that $X$ is bounded. Hence, we shall assume that all paths $c\in \Lambda$ take values on a common compact subset of $\mathbb{R}$. First we show that $F^n$ is $\Lambda$-continuous. Indeed, by the very definition
\aa{
F^n_t (c_t) =\int_{-\infty}^{\infty} \rho_n(c(t)-y)  \mathcal{F}^{y}_t(c_t) dy.
}
Let us fix an arbitrary $c\in \Lambda$. The $\Lambda$-continuity of $F^n$ follows immediately from the state-dependent continuity of $\{\mathcal{F}^y; y\in \mathbb{R}\}$ and the triangle inequality:
\begin{eqnarray*}
|F^n_t(c_t) - F^n_{t{'}}(w_{t'})|&\le& \int_{K}\big|\rho_n \big(c(t)-y\big) - \rho_n \big(w(t{'})-y\big)\big||\mathcal{F}^y_t(c_t)|dy\\
& &\\
&+& \int_{K}\big|\rho_n\big(w(t{'})-y\big)\big||\mathcal{F}^y_t(c_t) - \mathcal{F}^y_{t'}(w_{t'})|dy.
\end{eqnarray*}
for $w\in \Lambda$, where $K$ is a compact set. By the very definition,

$$\nabla^{v,i}F^n_t(c_t) = n^{i+1}\int_{\mathbb{R}}\rho^{(i)}\big(n(c(t)-y)   \big)\mathcal{F}^y_t(c_t)dy; 0\le t\le T$$
for $i=1, 2$. Similarly, the $\Lambda$-continuity of $\nabla^{v,i}F^n$ follows immediately from the state-dependent continuity of $\{\mathcal{F}^y; y\in \mathbb{R}\}$ and the triangle inequality:

\begin{eqnarray*}
|\nabla^{v,i}F^n_t(c_t) - \nabla^{v,i}F^n_{t{'}}(w_{t'})|&\le& n^{i+1}\int_{K}\Big|\rho^{(i)}\big(n(c(t)-y)\big) - \rho^{(i)}\big(n(w(t{'})-y)\Big||\mathcal{F}^y_t(c_t)|dy\\
& &\\
&+& n^{i+1}\int_{K}\big|\rho^{(i)}\big(n(w(t')-y)\big)\big||\mathcal{F}^y_t(c_t) - \mathcal{F}^y_{t{'}}(w_{t'})|dy.
\end{eqnarray*}
By Lemma \ref{horderm} and triangle inequality,

\begin{eqnarray}
\label{hcont}|\nabla^{h}F^n_t(c_t) - \nabla^{h}F^n_{t'}(w_{t'})|&\le& \int_{K}\big|\rho_n \big(c(t)-y\big) - \rho_n \big(w(t')-y\big)\big||\nabla^h\mathcal{F}^y_t(c_t)|dy\\
\nonumber& &\\
\nonumber&+& \int_{K}\big|\rho_n\big(w(t')-y\big)\big||\nabla^h\mathcal{F}^y_t(c_t) - \nabla^h\mathcal{F}^y_{t'}(w_{t'})|dy.
\end{eqnarray}
Estimate (\ref{hcont}), the local integrability of $y\mapsto \nabla^h\mathcal{F}^y_t(c_t)$ and the state-dependent $\Lambda$-continuity of $\{\nabla\mathcal{F}^y; y\in \mathbb{R}\}$ yield the $\Lambda$-continuity of $\nabla^{h}F^n$.
Hence, $F^n$ is $\mathbb{C}^{1,2}$. The functional It\^o formula~(see e.g~\cite{dupire},~\cite{cont}) applied to the semimartingale $X$ yields

$$F^n_t(X_t) = F^n_0(X_0) + \int_0^{t}\nabla^hF^n_s(X_s)ds + \int_0^{t}\nabla^v F^n_s(X_s)dX(s) + \frac{1}{2}\int_0^{t}\nabla^{v,2}F^n_s(X_s)d[X,X](s)$$
for $0\le t\le T.$
\end{proof}


\section{Path-dependent It\^o formula with singularity at random curves}\label{smoothcase}
In this section, we will investigate a path-dependent It\^o formula when the function $x\mapsto (\nabla^w_x F_t)(^{x} c_t)$ is smooth off path-dependent continuous bounded variation curves. The typical examples we have in mind are non-smooth functionals of the running maximum/minimum found in path-dependent payoffs arising in Mathematical Finance. Obtaining this type of It\^o's formula was inspired by Elworthy, Truman and Zhao~\cite{elworthy} who derived (non-path dependent) It\^o formulas where singularities are encoded by deterministic bounded variation curves. See also Peskir \cite{peskir}. At first, we remark that the classical occupation time formula also holds with path-dependent functions. We omit details of the proof which can be easily checked by well-known arguments.

\begin{lemma}\label{occupationformula}
Let $X$ be a continuous semimartingale with the local time $\{\ell^x(t); x\in \mathbb{R}, t\ge 0\}$. If $h:\Omega\times [0,T]\times \mathbb{R}\rightarrow \mathbb{R}$ is bounded and measurable, then for each $\omega\in \Omega$, we have
$$\int_0^t h(s,\om,X(s,\om)) d[X,X](s,\om) = \int_{-\infty}^\infty da \int_0^t h(s,\om,a) d_s\ell^a(s); 0\le t\le T.$$
\end{lemma}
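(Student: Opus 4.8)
The plan is to establish the occupation-time formula for path-dependent integrands by first reducing to the classical (non-path-dependent) occupation-time formula via a standard measure-theoretic induction, and then invoking a Fubini-type argument to interchange the $da$-integral with the $d_s\ell^a(s)$-integral. Since the stated formula is an $\omega$-wise identity, I would fix $\omega\in\Omega$ throughout and treat $X(\cdot,\om)$ as a fixed continuous semimartingale path with its local time $a\mapsto \ell^a(s,\om)$; the map $h(s,\om,a)$ then becomes a bounded measurable function $g(s,a):=h(s,\om,a)$ on $[0,T]\times\mathbb{R}$.

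\textbf{Step 1 (indicator case).} First I would prove the identity for $g(s,a)=\ind_{[0,r]}(s)\ind_{A}(a)$ with $r\in[0,t]$ and $A\subset\mathbb{R}$ Borel. For such $g$ the left-hand side is $\int_0^{r}\ind_A(X(s))\,d[X,X](s)$, which by the classical occupation-time formula for the fixed semimartingale path (the defining property of local time recalled in Section~\ref{molsec}, applied to $f=\ind_A$) equals $\int_{\mathbb{R}}\ind_A(a)\,\ell^a(r)\,da=\int_A \ell^a(r)\,da$. On the other hand, the right-hand side of the claimed formula becomes $\int_A da\int_0^r d_s\ell^a(s) = \int_A \big(\ell^a(r)-\ell^a(0)\big)\,da = \int_A \ell^a(r)\,da$, using that $s\mapsto\ell^a(s)$ is nondecreasing and right-continuous with $\ell^a(0)=0$, so the inner Lebesgue--Stieltjes integral is just the increment $\ell^a(r)$. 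Thus the two sides agree on indicators of measurable rectangles.

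\textbf{Step 2 (monotone class / functional monotone class extension).} Both sides of the identity are linear in $g$ and, for $g\ge 0$, satisfy the monotone convergence property: if $0\le g_k\uparrow g$ pointwise with all $g_k,g$ bounded, then $\int_0^t g_k(s,X(s))\,d[X,X](s)\uparrow \int_0^t g(s,X(s))\,d[X,X](s)$ by monotone convergence against the finite measure $d[X,X]$ on $[0,t]$, and similarly the right-hand side converges by monotone convergence in the (iterated, finite) measure $da\otimes d_s\ell^a(s)$ on $[-L,L]\times[0,t]$ once one restricts to a compact spatial window $[-L,L]$ containing the range of the path $X_{[0,t]}(\om)$ — outside this window both integrands vanish since $\ell^a(\cdot)\equiv 0$ there. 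A standard functional monotone class argument, starting from the rectangles of Step~1 (which generate the product $\sigma$-algebra on $[0,t]\times[-L,L]$ and form a $\pi$-system), then upgrades the identity to all bounded Borel-measurable $g$, hence to all bounded measurable $h$.

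\textbf{Step 3 (measurability / Fubini housekeeping).} The only genuine point requiring care — and the one I would flag as the main obstacle, modest as it is — is the well-definedness and the Fubini interchange on the right-hand side: one must know that $(s,a)\mapsto \ell^a(s,\om)$ admits a jointly measurable (indeed, by Trotter's theorem, a version continuous in $a$ and right-continuous nondecreasing in $s$) modification, so that $a\mapsto \int_0^t h(s,\om,a)\,d_s\ell^a(s)$ is measurable and the iterated integral makes sense, and that the mixed measure $da\,d_s\ell^a(s)$ is $\sigma$-finite on each compact window, permitting the order of integration used implicitly in Steps~1--2. These are classical facts about semimartingale local time (see Revuz--Yor), so as the statement indicates, I would only remark on them rather than reproduce the proof. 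This is precisely why the excerpt says the proof "can be easily checked by well-known arguments."
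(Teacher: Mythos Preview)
The paper omits the proof of this lemma entirely, remarking only that it ``can be easily checked by well-known arguments.'' Your monotone-class argument starting from products of time-indicators and spatial Borel sets, reducing to the defining occupation-time identity for the local time, is exactly the standard route the authors have in mind, and it is correct.
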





Let $\gamma = \{\gamma_t; 0\le t\le T\}$ be a family of non-anticipative functionals such that for each $c\in C([0,T];\mathbb{R})$, $t\mapsto \gamma_t(c_t)$ is a continuous bounded variation path. In the sequel, to keep notation simple, for a given $M> 0$,
we set $\mathcal{C}_M:= C([0,T]; [-M,M])$ and

$$\mathcal{G}_M: = \{(t,x,c)\in [0,T]\times [-M,M]\times \mathcal{C}_M \}.$$
$$\mathcal{G}^{\gamma}_M: = \{(t,x,c)\in \mathcal{G}_M; -M< x < \gamma_t(c_t)~\text{or}~\gamma_t(c_t) <x < M\}.$$
$$\Gamma_{c,t}:= (-\infty, \gamma_t(c_t)) \cup (\gamma_t(c_t), +\infty);~c\in \mathcal{C}_M,~0\le t \le T.$$


Throughout this section, for a given $c\in\mathcal{C}_M$, we write $\nabla_x F_t(^{x}c_t)$ and $\nabla_x^{-} F_t(^{x}c_t)$ to denote the usual pointwise derivative
and left derivative, with respect to $x$, respectively. The second left derivative will be denoted by $\nabla^{-,2}_x$. Since $\gamma$ is non-anticipative, then $\gamma(X)$ is an adapted bounded variation process.

\begin{theorem}
\label{Ito4}
Let us assume that A1.(i,ii) and A2 hold and for each $t\in [0,T]$, the function $x\mto F_t(^x c_t)$ is $C^1$ on sets $\Gamma_{c,t}$ for $c\in C([0,T];\mathbb{R})$, where $\nabla_x F_t(^x c_t)$ is bounded on the set $\mathcal{G}^\gamma_M$ for every $M>0$.
We also assume that for each $t\in [0,T]$, there exist left and right limits of $\nab_x F_t(^x c_t)$ 
as $x\to  \gm_t(c_t) \pm$.
Furthermore, we assume that for any $t\in [0,T]$ and $c_t\in C([0,t],\Rnu)$, there exists the second left derivative
$\nabla^{-,2}_x  F_t(^x c_t)$ on $\Gamma_{c,t}$ which is bounded on $\Gamma_{c,t}\cap (-M,M)\times \mathcal{C}_M$ for every $M>0$. Moreover, $\nabla^{-,2}_x  F_t(^x c_t)$ has the left limit at $\gamma_t(c_t)$ for each $c\in \mathcal{C}_M$. 
Finally, we assume that for any $c\in C([0,T];\mathbb{R})$, $\nab_x F_t(^{\gamma_t(c_t)-}c_t) - \nab_x F_t(^{\gamma_t(c_t)+}c_t)$ is continuous in $t$. If $X$ is a square-integrable continuous semimartingale, then

\mmm{
\label{formula5}
F_t(X_t) = F_0(X_0)+ \int_0^t\nabla^hF_s(X_s)ds + \int_0^t\nab^-_xF_s(^{X(s)}X_s)dX(s) \\
+\frac{1}{2}\int_0^t \nabla^{-,2}_x  F_s(^{X(s)-}X_s)d[X,X](s)
+ \frac{1}{2}\int_0^t(\nab_x F_s(^{\gm_s(X_s)+}X_s) - \nab_x F_s(^{\gm_s(X_s)-}X_s))d_s\td \ell^{\,0}(s) \; \text{a.s.}
}
for $0\le t\le T$, where $\{\td \ell^x(s); (x,s)\in \mathbb{R}\times \mathbb{R}_+\}$ is the local time of the semimartingale $\td X := X - \gm (X)$.
\end{theorem}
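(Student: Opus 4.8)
The plan is to combine the mollification machinery of Section~\ref{molsec} with a change of variables for the function $x\mapsto F_t(^x c_t)$ that has a convex/concave-type kink along the curve $\gamma$, and then pass to the limit. First I would invoke Lemma~\ref{preIto} applied to the mollified functionals $F^n$: since Assumptions A1(i) and A2 hold, each $F^n$ is $\mathbb{C}^{1,2}$ and we get
$$F^n_t(X_t) = F^n_0(X_0) + \int_0^t\nabla^h F^n_s(X_s)ds + \int_0^t\nabla^v F^n_s(X_s)dX(s) + \frac12\int_0^t\nabla^{v,2}F^n_s(X_s)d[X,X](s).$$
By a routine stopping argument with $T_M$ we may assume $X$ is bounded, so all paths lie in $\mathcal{C}_M$. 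The left-hand side and the $\nabla^h$-term converge by $\Lambda$-continuity (A1(i), A2) and bounded convergence; the It\^o integral term converges in $L^2(\mathbb P)$ because $\nabla^v F^n_s(X_s) = \nabla_x F^n_s(^x X_s)|_{x=X(s)} = (\rho_n\star \nabla^-_x F_s(^\cdot X_s))(X(s))$ converges to $\nabla^-_x F_s(^{X(s)}X_s)$ at every $s$ where $X(s)\ne \gamma_s(X_s)$ (hence $d[X,X]\otimes d\mathbb P$-a.e., since the set $\{s: X(s)=\gamma_s(X_s)\}$ is charged by $d[X,X]$ only through the local time $\td\ell^0$, which is handled separately) and is uniformly bounded on $\mathcal{G}^\gamma_M$.

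The heart of the matter is the second-order term $\frac12\int_0^t\nabla^{v,2}F^n_s(X_s)d[X,X](s)$, where $\nabla^{v,2}F^n_s(X_s) = (\rho_n\star \nabla^-_x F_s(^\cdot X_s))''(X(s))$ in the distributional-derivative sense. Here I would use the occupation time formula (Lemma~\ref{occupationformula}) to rewrite this as a spatial integral against $da\,d_s\ell^a(s)$, and then analyze the distributional second derivative of $x\mapsto \nabla^-_x F_s(^x c_s)$ on $\mathbb R$. On $\Gamma_{c,s}$ this distributional derivative is the classical $\nabla^{-,2}_x F_s(^x c_s)$, which is bounded and for which the mollified version converges a.e.; the jump of $\nabla_x F_s(^\cdot c_s)$ across $x=\gamma_s(c_s)$ contributes a Dirac mass of weight $\big(\nabla_x F_s(^{\gamma_s(c_s)+}c_s) - \nabla_x F_s(^{\gamma_s(c_s)-}c_s)\big)$ at that point. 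Splitting $\rho_n\star(\cdot)$ accordingly and integrating against $d[X,X](s)$ (equivalently against $\int_{\mathbb R} da\, d_s\ell^a(s)$), the absolutely continuous part yields $\frac12\int_0^t\nabla^{-,2}_x F_s(^{X(s)-}X_s)d[X,X](s)$ in the limit by bounded convergence, while the singular part localizes, as $n\to\infty$, onto the random time set $\{s: X(s)=\gamma_s(X_s)\}$ and produces $\frac12\int_0^t\big(\nabla_x F_s(^{\gamma_s(X_s)+}X_s) - \nabla_x F_s(^{\gamma_s(X_s)-}X_s)\big)d_s L(s)$ for an appropriate increasing process $L$.

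Identifying $L$ with $\td\ell^{\,0}$, the local time at $0$ of $\td X = X - \gamma(X)$, is where I expect the main subtlety. Since $\gamma(X)$ is a continuous bounded variation process, $\td X$ is again a continuous semimartingale with $[\td X,\td X] = [X,X]$, and $\{s: X(s)=\gamma_s(X_s)\} = \{s: \td X(s)=0\}$; the occupation density of $X$ near the curve $\gamma_s(c_s)$ is precisely the occupation density of $\td X$ near $0$. Concretely, I would write $\rho_n\big(n(X(s)-y)\big)$ concentrated near $y = \gamma_s(X_s)$ as (approximately) $\rho_n$ evaluated at $n\td X(s)$ up to a bounded-variation shift, use the occupation formula for $\td X$, and pass $n\to\infty$ using the joint continuity and the assumed continuity in $t$ of the jump $\nabla_x F_t(^{\gamma_t(c_t)-}c_t) - \nabla_x F_t(^{\gamma_t(c_t)+}c_t)$, so that the weight can be pulled out of the limiting $d_s\td\ell^{\,0}(s)$-integral. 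The continuity hypotheses on the one-sided limits of $\nabla_x F$ and on the jump, together with the boundedness of $\nabla^{-,2}_x F$ on $\Gamma_{c,s}\cap(-M,M)$, are exactly what make these limit exchanges legitimate; assembling the four surviving terms gives~\eqref{formula5}.
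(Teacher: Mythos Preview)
Your approach is essentially the paper's: mollify, apply Lemma~\ref{preIto}, and pass to the limit term by term, with the second-order term splitting into a regular piece and a Dirac-mass piece along $\gamma$ that is identified via the occupation formula for $\tilde X = X - \gamma(X)$. The paper makes this split explicit from the outset by writing
\[
F_t(^xc_t) = \hat F_t(c_t;x) + \tilde F_t(c_t;x), \qquad \tilde F_t(c_t;x) := \big(\nabla_x F_t(^{\gamma_t(c_t)+}c_t) - \nabla_x F_t(^{\gamma_t(c_t)-}c_t)\big)(x-\gamma_t(c_t))^+,
\]
so that $\hat F_t(c_t;\cdot)$ is genuinely $C^1$ on $[-M,M]$ and $\tilde F$ carries the kink; then $\nabla^2_x\tilde F^n_t(c_t;x) = (\text{jump})\cdot \rho_n(x-\gamma_t(c_t))$, and the local-time computation for $\tilde X$ is a one-line application of Lemma~\ref{occupationformula} together with $[\tilde X,\tilde X]=[X,X]$. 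Your ``distributional second derivative'' description is the same idea stated less concretely.

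There is one genuine slip in your Step~2. You argue that $\nabla^v F^n_s(X_s)\to \nabla^-_x F_s(^{X(s)}X_s)$ only for $s$ with $X(s)\ne\gamma_s(X_s)$, and then say this is $d[X,X]\otimes d\mathbb P$-a.e.\ sufficient. That covers the martingale part of $\int\cdot\,dX$, but not the bounded-variation part: the set $\{s:X(s)=\gamma_s(X_s)\}$ has $d[X,X]$-measure zero (this is correct, via the occupation formula for $\tilde X$), yet it can have positive $|dA|$-measure, so bounded convergence for $\int\cdot\,dA$ is not justified. The fix, which is what the paper does, is to observe that the mollifier is \emph{left-sided} (since $\mathrm{supp}\,\rho\subset(0,2)$), so $\nabla_x F^n_t(^xc_t)=\int_0^2\rho(y)\,\nabla^-_x F_t(^{x-y/n}c_t)\,dy$ converges to $\nabla^-_x F_t(^xc_t)$ for \emph{every} $x\in[-M,M]$, including $x=\gamma_t(c_t)$, because the $\hat F$--$\tilde F$ decomposition shows $\nabla^-_x F_t(^xc_t)$ exists and is left-continuous there. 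With pointwise convergence everywhere and uniform boundedness on $\mathcal G_M$, both the It\^o-isometry and the bounded-convergence arguments go through for the full semimartingale integral. Also, your aside that $\{s:X(s)=\gamma_s(X_s)\}$ is ``charged by $d[X,X]$ only through $\tilde\ell^0$'' is not right: that set has $d[X,X]$-measure zero; $\tilde\ell^0$ is the $ds$-density of occupation of $\tilde X$ at $0$ weighted by $d[\tilde X,\tilde X]$, not a $d[X,X]$-charge on that set.
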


\begin{proof}
The proof uses some of the ideas from Corollary 2.1 of  Theorem 2.3 in \cite{elworthy}.
 At first, we prove the result for the stopped process $X^M$ where $M$ is fixed. Let us fix $t\in [0,T]$. Let $F^n$ be the mollifier for $F$ according to \eqref{mol}. Since functional $F$ satisfies the assumptions of Lemma \ref{preIto}, formula
\eqref{preIto1} holds for  $F^n(X^M)$. In the sequel, we will study the limit of each term in \eqref{preIto1} as $n\to\infty$. By A1.(ii), $F^n_t(X^M_t) \to F_t(X^M_t)~a.s$ as $n\rightarrow \infty$.

\

\noindent \textbf{STEP 1:} Let us prove the convergence
\aaa{
\label{h-conv}
 \int_0^t\nabla^h F^n_s(X^M_s)ds \to  \int_0^t\nabla^h F_s(X^M_s)ds \quad \text{a.s.}
 }
Lemma \ref{horderm} yields
\aaa{
\label{f15}
 \nab^h F^n_s (X^M_s) =  \int_{-\infty}^{\infty} \rho_n(X^M(s)-y)  \nab^h \mathcal{F}^y_s(X^M_s) dy; 0\le s\le T.
}
By Assumption A2, $y\mapsto \nabla^h\mathcal{F}^y_s(X^M_s)$ is continuous a.s.for each $s\in [0,T]$ and hence

\aa{
\lim_{n\to\infty} \nab^h F^n_s (X^M_s) =  \nab^h \mathcal{F}^{X^M(s)}(X^M_s) = \nab^h F_s(X^M_s)~a,s; 0\le s\le T.
}
From Assumption A2, $\{\nabla^h\mathcal{F}^y; y\in \mathbb{R}\}$ is state boundedness-preserving. Then, bounded convergence theorem yields

$$\lim_{n\rightarrow \infty}\int_0^t \nabla^h F^n(X^M_s)ds =\int_0^t \nabla^h F(X^M_s)ds~a.s$$

\

\noindent \textbf{STEP 2:} Next, we will prove that
\aaa{
\label{convergence}
\int_0^t \nab^v F_s^n(X^M_s) dX^M(s) \to \int_0^t \nab^-_x F_s(^{X^M(s)}X^M_s) dX^M(s)
\quad \text{in} \; L^2(\mathbb{P})
}
as $n\rightarrow \infty$. Firstly, we will show that under the assumptions of the theorem, for each fixed $(t,c,x)\in \mathcal G_M$, $\nab_x F_t^n(^xc_t)$ converges to $\nab_x^- F_t(^xc_t)$ as $n\to \infty$.
Fix a path $c\in C([0,t];\mathbb{R})$. At first, one should notice the left derivative $\nab^-_x F_t(^x c_t)$ is well defined for $x=\gm_t(c_t)$.
Indeed, we shall represent the functional $F_t(^xc_t)$ in the following form

\aaa{
\label{f1}
F_t(^xc_t) = \hat F_t(c_t;x) + \tilde F_t(c_t;x)
}
where
\aa{
 &\hat F_t(c_t;x) := F_t(^xc_t) + (\nab_x F_t(^{\gm_t(c_t)-}c_t) - \nab_x F_t(^{\gm_t(c_t)+}c_t))(x-\gm_t(c_t))^+,\\
 &\tilde F_t(c_t;x) := (\nab_x F_t(^{\gm_t(c_t)+}c_t) - \nab_x F_t(^{\gm_t(c_t)-}c_t))(x-\gm_t(c_t))^+.
}
It is easy to see that the function $x\mapsto \hat F_t(c_t;x)$ is $C^1$ in $x\in [-M,M]$. But on $[-M,\gm_c(t)]$,
$\hat F_t(c_t;x) = F_t(^xc_t)$. Hence, $\nab_x^- F_t(^xc_t)$ exists at the point $x=\gm_t(c_t)$, and therefore,
everywhere on $[-M,M]$. From the assumptions of the theorem, it is also clear that
$\nab_x^- F_t(^xc_t)$ is bounded on $\mathcal G_M$. Thus, we verified the assumptions of Lemma \ref{lem1}
with respect to the function $h\mto F_t(^{x-h}c_t)$ with $(t,c,x)$ being a parameter. This implies the boundedness
of the ratios $(F_t(^xc_t) - F_t(^{x-h}c_t))/h$. Hence, Lebesgue's bounded convergence theorem yields
$\nab_x F^n_t(^x c_t)$ is the mollifier for $\nab^-_x F_t(^x c_t)$:
\aaa{
\label{moll44}
\nab_x F^n_t(^x c_t) = \int_0^2 \rho(y) \nab^-_x F_t(^{x-\frac{y}{n}} c_t) dy.
}
From the assumptions of the theorem and the existence of $\nab_x^- F_t(^xc_t)$ at  $x=\gm_t(c_t)$, we know that $x \mto \nab_x^- F_t(^xc_t)$ is left continuous.
By the boundedness of $\nab_x^- F_t(^xc_t)$ on $\mathcal G_M$  and its left continuity in $x$, we obtain that
for each $(x,t,c)\in\mathcal G_M$,
$\nab_x F^n_t(^xc_t) \to \nab_x^- F_t(^xc_t)$ as $n\to \infty$ by Lebesgue's theorem.

Next, since $\nab_x^- F_t(^xc_t)$ is bounded on $\mathcal{G}_M$,
its mollifier $\nab_x F^n_t(^xc_t)$ is bounded on $\mathcal{G}_M$ by the same constant. In particular, there exists $C$ such that $|\nab^v F^n_s(X^M_s)|
= |\nab_x F^n_s(^{X^M(s)}X^M_s)|\le C$ for every $(\omega,s)\in \Omega\times [0,t]$.
Now the $L_2$-convergence \eqref{convergence} is implied by the semimartingale decomposition,
It\^o's isometry, and the bounded convergence theorem.

%
%

In the sequel, to shorten notation we write $[X^M] = [X^M,X^M]$.

\

\noindent \textbf{STEP 3:} Lastly, we investigate the limit of $\frac{1}{2}\int_0^t\nabla^{v,2}F^n_s(X^M_s)d[X^M](s)$ as $n\rightarrow \infty.$
By applying mollification \eqref{mol}
in \eqref{f1}, we obtain $F^n_t(^xc_t) = \hat F^n_t(c_t;x) + \tilde F^n_t(c_t;x)$, where $\hat{F}^n_t(c_t;x):= (\rho_n \star \hat{F}_t(c_t; \cdot))(x)$ and $\tilde{F}^n_t(c_t;x):= (\rho_n \star \tilde{F}_t(c_t; \cdot))(x)$. Let us define $\hat F^n_t(c_t) := \hat F^n_t(c_t;c(t))$ and
$\tilde F^n_t(c_t) := \tilde F^n_t(c_t;c(t))$. We have:

\aaa{
\label{decomp}
\frac{1}{2}\int_0^t\nabla^{v,2}F^n_s(X^M_s)d[X^M](s) = \frac{1}{2}\int_0^t\nabla^{v,2}\hat F^n_s(X^M_s)d[X^M](s) +
\frac{1}{2}\int_0^t\nabla^{v,2}\tilde F^n_s(X^M_s)d[X^M](s).
}
Note that
$\hat F_t(c_t;x)$ is $C^1$ in $x$  on $\mathcal G_M$
and the map $x\mto \nab_x \hat F_t(c_t;x)$ has on $\mathcal G_M^\gm$ a bounded left derivative
$\nab_x^-  \nab_x \hat F_t(c_t;x)$.
By Lemma \ref{lem1} and Remark \ref{rem1},
$x\mto \nab^2_x \hat F^n_t(c_t,x)$ is the mollifier for
$x\mto \nab_x^- \nab_x \hat F_t(c_t,x)$ on $[-M,\gamma_t(c_t) - \eps]$ for any sufficiently small $\eps>0$, i.e.

 \aaa{
 \label{f89}
 \nab^2_x \hat F^n_t(c_t;x) = \int_0^2 \rho(y) \nab_x^- \nab_x \hat F_t(c_t;x-\frac{y}{n})dy
 =  \int_0^2 \rho(y) \nab_x^{-,2} F_t(^{x-\frac{y}{n}}c_t)dy, \quad x\in [-M,\gamma_t(c_t) - \eps].
 }
 By assumption, $x\mto \nab_x^{-,2} F_t(^xc_t)$ is bounded on $\Gamma_{c,t}$ and
 its left limit exists at $x=\gm_t(c_t)$. This implies that \eqref{f89} holds for all $x\in [-M,\gm_t(c_t)]$.
 We note also that \eqref{f89} holds for $x\in (\gm_t(c_t) + \frac2{m},M]$ whenever $n>m$ and $m$
 is fixed arbitrary.
 By Lebesgue's theorem, we pass to the limit in \eqref{f89} as $n\to\infty$ while
 $x\in  [-M,\gm_t(c_t)] \cup (\gm_t(c_t) + \frac2{m},M]$ and $(t,c)\in [0,T]\times C([0,T],\Rnu)$ are fixed.
 We obtain that for $(x,c,t) \in [-M,\gm_t(c_t)] \cup (\gm_t(c_t) + \frac2{m},M] \times C([0,T],\Rnu) \times [0,T]$
 \aa{
 \lim_{n\to\infty} \nab^2_x \hat F^n_t(c_t;x) = \nab_x^{-,2} F_t(^{x-}c_t).
 }
 Since $m$ is fixed arbitrary, the above equality holds for all $(x,c,t) \in \mathcal G_M$.

Therefore, we have
\aa{
\lim_{n\to\infty}  \nabla^{v,2}\hat F^n_t(X^M_t) = \nab^{-,2}_x F_t(^{X^M(t)-}X^M_t)~a.s,
}
and
\aa{
\int_0^t\nabla^{v,2}\hat F^n_s(X^M_s)d[X^M](s) \to \int_0^t \nab^{-,2}_x  F_s(^{X^M(s)-}X^M_s) d[X^M](s)
\quad \text{a.s.}
}
by bounded convergence.

Let us investigate the convergence of the last term in \eqref{decomp}. It is convenient to introduce the following notation: We define $\gamma^M(s):=\gamma_{s\wedge T_M}(X^M_s)$ and $\tilde{X}^M(s) :=X^M(s) - \gamma^M(s); 0\le s\le T.$ Let $\ffi^n_s(x)$ be the  mollifier of $(x-\gm^M(s))^+$ according to formula \eqref{mol}, and let $\ffi^n(x)$ be the
mollifier of $x^+$. It is easy to verify that $\ffi^n_s(x) = \ffi^n(x-\gm^M(s))$.  Therefore,
\aa{
\td F^n_t(X^M_t;x) = &(\nab F_t(^{\gm^M(t)+}X^M_t) - \nab F_t(^{\gm^M(t)-}X^M_t)) \ffi^n_t(x) \\
= & (\nab F_t(^{\gm^M(t)+}X^M_t) - \nab F_t(^{\gm^M(t)-}X^M_t))
\ffi^n(x-\gm^M(t))~a.s.
}
Note that $\nab_x \ffi^n(x) = \int_{-\infty}^\infty \rho_n(x-y) H(y) dy$, where $H$ is the Heaviside function, and
that $\nab^2_x \ffi^n(x) = \int_{-\infty}^\infty \rho_n(x-y) dH(y) = \rho_n(x)$. Note that since
$\gm_\cdot(X_\cdot)$ has continuous bounded variation paths, then $[X^M](s) = [\td X^M](s)~a.s; 0\le s\le T$.

Now let $\td \ell^x_M$ be the local time of $\td X^M$. By Lemma \ref{occupationformula}, we obtain:
\aa{
&\frac12\int_0^t\nabla^{v,2}\tilde F^n_s(X^M_s)d[X^M](s) = \frac{1}{2} \int_0^t \nab^2_x \td F^n_s(X^M_s, \td X^M(s) + \gm^M(s))d[\td X^M](s)\\
 &= \frac{1}{2}\int_{-\infty}^\infty dx \int_0^t \nab^2_x \td F^n_s(X^M_s; x + \gm^M(s)) d_s\td  \ell^x_M(s) \\
&= \frac{1}{2}\int_{-\infty}^\infty \nab^2_x \ffi^n(x)  dx \int_0^t(\nab_x F_s(^{\gm^M(s)+}X^M_s) - \nab_x F_s(^{\gm^M(s)-}X^M_s))d_s\td  \ell^x_M(s)\\
&= \frac{1}{2}\int_{-\infty}^\infty \rho_n(x) dx   \int_0^t(\nab_x F_s(^{\gm^M(s)+}X^M_s) - \nab_x F_s(^{\gm^M(s)-}X^M_s))d_s\td \ell_M^x(s)\\
& \to \frac{1}{2}\int_0^t(\nab_x F_s(^{\gm^M(s)+}X^M_s) - \nab_x F_s(^{\gm^M(s)-}X^M_s))d_s\td \ell_M^{\,0}(s)~a.s.\quad \text{as} \; n\to \infty.
}
The above computations imply formula \eqref{formula5} at time $t\wedge T_M$. Letting $M$ go to infinity, we obtain \eqref{formula5}.
\end{proof}

The most simple application of Theorem~\ref{Ito4} is a pathwise description of the running maximum. A version of this formula appeared in Dupire \cite{dupire} but without a rigorous proof.

\begin{example}\label{runningmax}
\rm
Let us apply formula \eqref{formula5} to the running maximum $F_t(c_t)=\max_{s\in [0,t]} c(s); c\in\Lambda$.
One immediately verifies that $F$ satisfies the assumptions of Theorem \ref{Ito4}. Let us compute
each term of  \eqref{formula5}. We have:
$\nab^h F_t(X_t) = 0$, $\nab^-_x F_t(^xX_t) = 0$ if $x\lt F_t(X_t)$ and  $\nab^-_x F_t(^xX_t) = 1$ if $x > F_t(X_t)$.
In particular, $\nab^-_x F_t(^{X(t)}X_t) = 0$. Next, for $\gamma_t(c_t) = F_t(c_t)$, one can easily check that for each $c\in C([0,T];\mathbb{R})$, the function $x\mto F_t(^xc_t)$ is $C^1$ for $x\in (-\infty, \gamma_t(c_t)\cup (\gamma_t(c_t),+\infty)$ and $\nab^{-,2}_x F_t(^xc_t) = 0$ in this open set. Finally, we notice that $\nab_x F_t(^{\gamma_t(X_t)+}X_t) - \nab_x F_t(^{\gamma_t(X_t)-}X_t) = 1$  for all $t\in [0,T]$.
By formula \eqref{formula5},
\aa{
\sup_{0\le s\le t}X(s) = X(0) + \frac{1}{2}\td\ell^{\,0}(t),
}
where $\tilde{\ell}$ is the local time of the semimartingale $X(t) - \sup_{0\le s\le t}X(s); 0\le t\le T$.
\end{example}

Let us now apply Theorem \ref{Ito4} to concrete path-dependent functionals arising in Mathematical Finance.

\begin{example}\label{look1}
\rm
Similar to example~\ref{runningmax}, we shall also consider the payoff decomposition of a standard lookback option with fixed strike $K$ (see e.g Kwok~\cite{kwok} for further details). For a given constant $K>0$, we consider $F_t(c_t) = \big(\sup_{0\le s\le t}c(s)-K\big)^+$for $c\in \Lambda$. In this case, a straightforward application of Theorem~\ref{Ito4} yields

$$\Big(\underset{ 0\le s\le t}{\operatorname{sup}}X(s)-K\Big)^+ = \big( X(0)-K \big)^+ + \frac{1}{2}\td\ell^{\,0}(t); 0\le t\le T$$
where $\tilde{\ell}$ is the local time of the semimartingale $X(t) - \max\{\sup_{0\le s\le t}X(s); K\}; 0\le t\le T$.
\end{example}

\begin{example}
\rm
For each non-negative path $c\in \Lambda$, let us consider
$$F_t(c_t) = \big( c(t)-\lambda\inf_{T_0\le s\le t}c(s)\big)^+$$
where $\lambda>1$ and $0\le T_0 < T$ are arbitrary constants. This functional is the payoff of the so-called partial lookback european call option which allows lower investments than derivative contracts based on the payoff given in Example \ref{look1} (see e.g \cite{kwok}). Let us now apply Theorem \ref{Ito4} to give a novel representation for this payoff. For simplicity, we set $T_0=0$. Indeed, A1 (i), A1(ii) and A2 hold where $\nabla^h\mathcal{F}^x_t(c_t)=0$ for every $x\in \mathbb{R}_+$ and a non-negative path $c\in \Lambda$. By the very definition of $F$, it is apparent that the bounded variation functional which encodes the whole singularity is $\gamma_t(c_t)=\lambda\inf_{0\le s\le t}c(s); 0\le t\le T$. Moreover, $\nabla^-_xF_t(^{x}c_t) = 0$ if $x\le \gamma_t(c_t)$ and $\nabla^-_x F_t(^{x}c_t) = 1$ for $\gamma_t(c_t) < x$. In particular, $\nabla^- F_t(^{X(t)}X_t) = 1\!\!1_{\{ X(t) > \gamma_t(X_t)\}}; 0\le t \le T$. Moreover, $\nabla_x F_t(^{x}c_t) = 0$ if $x < \gamma_t(c_t)$ and $\nabla_x F_t(^{x}c_t) = 1$ if $x > \gamma_t(c_t)$. In particular, $\nabla^-_xF_t(^{\gamma_t(c_t)-}c_t) = 0$, $\nabla^-_xF_t(^{\gamma_t(c_t)+}c_t) = 1$ and $\nabla^{-,2}_xF_t(^{x}c_t) = 0$ over $(-\infty, \gamma_t(c_t))\cup (\gamma_t(c_t), +\infty)$. Finally, if $X$ is a non-negative square-integrable continuous semimartingale, then applying formula (\ref{formula5}), we get

$$\big( X(t) - \lambda \inf_{0\le s\le t}X(s)\big)^+ =  \int_0^t1\!\!1_{\{ X(s) > \gamma_s(X_s)\}}dX(s) + \frac{1}{2}\td\ell^{\,0}(t); 0\le t\le T,$$
where $\tilde{\ell}$ is the local time of the semimartingale $X(t)  -  \lambda\inf_{0\le s\le t}X(s); 0\le t\le T.$ and $\big( (1-\lambda)X(0)\big)^+=0$.
\end{example}

\section{$(p,q)$-bivariations and Functional It\^o formulas}\label{bivariationsection}
In this section, we provide an It\^o formula in the sense of Young in the path-dependent case. We refer the reader to the seminal work by Young~\cite{young1} for a full treatment of double Lebesgue-Stieljes-type integrals for unbounded variation functions. For a more simplified presentation, see e.g Ohashi and Simas~\cite{OS}.

Before presenting the main results, we recall some basic results from deterministic double integrals in Young-sense~\cite{young1}. Recall that if $f:[a,b]\rightarrow \mathbb{R}$ is a real-valued function and $p\ge 1$, then

$$\|f\|^p_{[a,b];p}:= \sup_{\Pi}\sum_{x_i\in \Pi}|f(x_i)-f(x_{i-1})|^p <\infty$$
where $\sup$ is taken over all partitions $\Pi$ of a compact set $[a,b]\subset \mathbb{R}$. The following notion is originally due to Young~\cite{young1} and it will play a key role in this section:
\begin{definition}\label{pq}
We say that $h:[a,b]\times [c,d]\rightarrow \mathbb{R}$ has $(p,q)$-bivariation for $p,q \ge 1$ if

$$\|h\|_{1;p}:=\sup_{y_1,y_2\in [c,d]^2}\| h(\cdot, y_1) - h(\cdot, y_2)\|_{[a,b];p}< \infty,$$
and
$$\|h\|_{2;q}:=\sup_{x_1,x_2\in [a,b]^2}\| h(x_1, \cdot) - h(x_2,\cdot)\|_{[c,d];q}< \infty.$$
\end{definition}

The importance of $(p,q)$-bivariation lies in the following result, which is a particular case of Theorem 6.3 due to Young~\cite{young1}.

\begin{theorem}[Theorem 6.3~\cite{young1}]\label{mainbound}
Let $h,G:[a,b]\times [c,d]\rightarrow \mathbb{R}$ be two functions, where $h$ vanishes on the lines $x=a$ and $y=c$ and has
bounded $(p,q)$-bivariation, and $G$ satisfies $|\Delta_i\Delta_jG(x_i,y_j)|\leq C|x_i-x_{i-1}|^{1/\tilde{p}}|y_j-y_{j-1}|^{1/\tilde{q}}$, for
some constant $C>0$, and $\tilde{p},\tilde{q}\ge 1$. If there exists $\alpha\in (0,1)$ such that
$$\alpha/p + 1/\tilde{p}>1\quad \hbox{and}\quad(1-\alpha)/q + 1/\tilde{q}>1,$$
then, the 2D Young integral $\int_a^b\int_c^d h(x,y)d_{(x,y)}G(x,y)$ exists.
\end{theorem}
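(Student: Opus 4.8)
The statement is Young's Theorem~6.3 in~\cite{young1}, so the first option is simply to cite it; for completeness I sketch how I would reconstruct the argument. \textbf{Set-up.} Since only the rectangular increments of $G$ enter the Riemann--Stieltjes sums, one may assume without loss of generality that $G$ too vanishes on $x=a$ and $y=c$ (adding a function of $x$ alone or of $y$ alone does not change any $\square G$). For a grid partition $\pi=\{a=x_0<\dots<x_M=b\}\times\{c=y_0<\dots<y_N=d\}$ form the Young--Stieltjes sum $S(\pi)=\sum_{i,j}h(x_{i-1},y_{j-1})\,\square_{ij}G$, where $\square_{ij}G=G(x_i,y_j)-G(x_{i-1},y_j)-G(x_i,y_{j-1})+G(x_{i-1},y_{j-1})$. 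The plan is to show that $\{S(\pi)\}$ is a Cauchy net as the mesh of $\pi$ tends to $0$ and that the limit does not depend on the corner tags; that limit is then, by definition, $\int_a^b\int_c^d h\,d_{(x,y)}G$.

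\textbf{One-line coarsening.} A direct computation shows that deleting an interior line $x_k$ from the $x$-partition changes $S(\pi)$ by exactly
\aa{
\sum_{j=1}^N\big(h(x_{k-1},y_{j-1})-h(x_k,y_{j-1})\big)\,\Delta_j\big(G(x_{k+1},\cdot)-G(x_k,\cdot)\big),
}
which is a one-dimensional Young--Stieltjes sum in $y$ of the $x$-increment $\delta_k h(\cdot):=h(x_k,\cdot)-h(x_{k-1},\cdot)$ against the strip increment $g_k(\cdot):=G(x_{k+1},\cdot)-G(x_k,\cdot)$. Here $y\mapsto\delta_k h(y)$ has $q$-variation at most $\|h\|_{2;q}$ and vanishes at $y=c$, while $g_k$ is $(1/\tilde q)$-H\"older with constant $\le C|x_{k+1}-x_k|^{1/\tilde p}$ by the control on $\square G$; since $1/q+1/\tilde q\ge(1-\alpha)/q+1/\tilde q>1$, the classical one-dimensional Young inequality applies and bounds this defect by $C\,V_q(\delta_k h)\,|x_{k+1}-x_k|^{1/\tilde p}(d-c)^{1/\tilde q}$. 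The symmetric computation controls the deletion of a $y$-line by a one-dimensional Young sum in $x$, using that $x\mapsto h(x,y)$ has $p$-variation at most $\|h\|_{1;p}$ uniformly in $y$ (again thanks to $h$ vanishing on $y=c$) and that $x\mapsto G(x,y_{k+1})-G(x,y_k)$ is $(1/\tilde p)$-H\"older with constant $\le C|y_{k+1}-y_k|^{1/\tilde q}$.

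\textbf{Greedy removal and the exponent balance.} Iterate Young's greedy scheme: at each stage delete the interior $x$- or $y$-line whose removal is cheapest, until the trivial partition is reached. At a stage with $M$ interior $x$-lines, two facts bound the cheapest $x$-cost: a gap with $|x_{k+1}-x_k|\le(b-a)/M$ contributes the factor $M^{-1/\tilde p}$ through the H\"older constant of $g_k$, and interpolating $V_q(\delta_k h)$ between its uniform bound $\|h\|_{2;q}$ and the $\ell^p$-control $\sum_k|\delta_k h(y)|^p\le\|h\|_{1;p}^p$ allows one to spend a fraction $\alpha$ of the $x$-regularity budget and gain an extra factor of order $M^{-\alpha/p}$. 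Hence one $x$-coarsening step costs $\le C\,M^{-(\alpha/p+1/\tilde p)}$, and $\alpha/p+1/\tilde p>1$ makes the series over $M$ summable; symmetrically, one $y$-coarsening step costs $\le C\,N^{-((1-\alpha)/q+1/\tilde q)}$, summable by $(1-\alpha)/q+1/\tilde q>1$. Collapsing any $\pi$ to the trivial partition therefore changes $S(\pi)$ by an amount bounded independently of $\pi$; this gives a uniform bound on $S(\pi)$ and, comparing two partitions through their common refinement, the Cauchy property as the mesh tends to $0$. Tag-independence follows because changing a corner tag in a cell produces terms of the form $(\Delta h)\cdot\square G$ that are dominated by the same estimates and vanish with the mesh.

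\textbf{Main obstacle.} The delicate step is precisely the balance in the greedy estimate: distributing the available regularity between the two directions via the parameter $\alpha$ so that both coarsening exponents $\alpha/p+1/\tilde p$ and $(1-\alpha)/q+1/\tilde q$ exceed $1$ \emph{simultaneously}, and verifying that the one-dimensional Young inequality can be applied in each direction with the asserted dependence on $|x_{k+1}-x_k|$ (resp.\ $|y_{k+1}-y_k|$) uniformly over the remaining lines of the partition. Everything else is bookkeeping along Young's original \S6 argument in~\cite{young1}, which is the cleanest reference and which I would cite rather than reproduce in full.
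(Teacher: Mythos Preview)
The paper does not prove this statement at all: it is stated purely as a citation of Theorem~6.3 from Young's original work~\cite{young1}, introduced with the words ``which is a particular case of Theorem~6.3 due to Young.'' Your proposal correctly identifies this and even goes further by sketching the coarsening argument from Young's \S6; there is nothing in the paper to compare against beyond the bare citation, so your first sentence already matches the paper's treatment exactly.
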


\begin{remark}
We stress that there exists a related literature on $2D$-Young integral based on joint variations (see e.g~Friz and Victoir \cite{friz, friz1}) and related norms (see e.g~ Towghi \cite{towghi1}), rather than the bivariation concept. Indeed, one can check that $\|h\|_{1;p}\le RV^{p,p}_{[a,b]\times[c,d]}(h)$ and $\|h\|_{2;q}\le RV^{q,q}_{[a,b]\times [c,d]}(h)$ and these inequalities may be strict. See Section \ref{jointsection} for the definition of the norm $RV$.
\end{remark}

\begin{remark}
In general, we only know that generic continuous semimartingales admit local times with finite $(1,2+\delta)$-bivariation (for every $\delta >0$) rather than joint variation~(see Lemma 2.1 in Feng and Zhao~\cite{feng}). In some particular cases, the local time of a semimartingale admits joint variation. See Section \ref{jointsection} for details about symmetric stable processes.
\end{remark}

\subsection{Functional It\^o formula}\label{funcsec1}
Throughout this section, $\delta >0$ and $p, \tilde{p},\tilde{q}\ge 1$ are constants such that $\frac{1}{p} + \frac{1}{2+\delta }> 1$ and there exists $\alpha\in (0,1)$ such that
$$\alpha + \frac{1}{\tilde{p}}>1\quad \hbox{and}\quad\frac{(1-\alpha)}{2+\delta} + \frac{1}{\tilde{q}}>1$$

\begin{lemma}\label{intpartsformula}
Let $\varphi:\Omega\times [0,T]\times \mathbb{R}\rightarrow \mathbb{R}$ be a stochastic process such that $(t,x)\mapsto \nabla^2_x\varphi(\omega, t, x)\in  C([0,T]\times\mathbb{R};\mathbb{R})$ for each $\omega\in \Omega$ and $\nabla^2_x\varphi$ is bounded on $\Omega\times [0,T]\times[-M,M]$ for each $M>0$. Then,




\begin{eqnarray}
\nonumber \int_0^t\nabla^2_{x}\varphi(s, X^M(s))d[X^M,X^M](s) &=& \int_{\mathbb{R}}\Big(\int_0^{t\wedge T_M}\nabla^2_{x}\varphi(s,y)d_s\ell^y(s)\Big)dy\\
\nonumber& &\\
\label{p4}&=& -\int_0^{t\wedge T_M}\int_{\mathbb{R}}\nabla_x\varphi(s, x)d_{(s,x)}\ell^x(s)~a.s
\end{eqnarray}
for $0\le t\le T$. In (\ref{p4}), the double integral is interpreted as a 2D Young integral in the sense of \cite{young1}.
\end{lemma}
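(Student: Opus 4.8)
The plan is to prove the two equalities in \eqref{p4} separately. The first equality is an immediate consequence of the occupation time formula in the form of Lemma \ref{occupationformula}: since $(t,x)\mapsto \nabla^2_x\varphi(\omega,t,x)$ is continuous and $\nabla^2_x\varphi$ is bounded on $\Omega\times[0,T]\times[-M,M]$, the function $h(s,\omega,a):=\nabla^2_x\varphi(s,\omega,a)\mathbb{I}_{\{|a|\le M\}}$ is bounded and measurable, and applying Lemma \ref{occupationformula} to $h$ together with the fact that $X^M(s)\in[-M,M]$ and that $d[X^M,X^M]$ is supported on $\{s\le T_M\}$ yields the first line. Fubini's theorem is used to write the iterated integral in the displayed order; this is justified by boundedness of $\nabla^2_x\varphi$ on the relevant compact set and finiteness of the total mass of $d_s\ell^y(s)\,dy$ over $[-M,M]\times[0,t\wedge T_M]$.

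For the second equality I would argue by approximation together with an integration-by-parts identity for the 2D Young integral. First I would fix $M>0$ and work on the strip $[-L,L]$ with $L>M$ large enough that $\ell^y(s)=0$ for $|y|\ge L$ and all $s\le T$ (such $L$ exists a.s. since $X^M$ is bounded); outside $[-L,L]$ both sides vanish. On $[0,t\wedge T_M]\times[-L,L]$ the local time $\ell^x(s)$ has finite $(1,2+\delta)$-bivariation a.s. by Lemma 2.1 of Feng and Zhao \cite{feng}, while $G(s,x):=\nabla_x\varphi(s,x)$ is $C^1$ in $x$ with $\nabla^2_x\varphi$ bounded on the strip, so $|\Delta_i\Delta_jG(s_i,x_j)|\le C|s_i-s_{i-1}|^{0}|x_j-x_{j-1}|^{1}$, i.e. the Young–Towghi estimate of Theorem \ref{mainbound} holds with $\tilde p=\infty$ (or any $\tilde p$ with $\alpha>0$) and $\tilde q=1$; since $\alpha/1+1/\tilde p>1$ fails only if $\alpha\le 0$, I would instead invoke the estimate with $G$ Hölder and appeal directly to the hypotheses on $p,\tilde p,\tilde q,\delta$ stated at the start of the subsection, which are exactly tailored so that Theorem \ref{mainbound} applies; hence the 2D Young integral $\int_0^{t\wedge T_M}\int_{\mathbb{R}}\nabla_x\varphi(s,x)d_{(s,x)}\ell^x(s)$ exists.

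Next I would establish the integration-by-parts formula
\[
\int_{\mathbb{R}}\Big(\int_0^{t\wedge T_M}\nabla^2_x\varphi(s,y)\,d_s\ell^y(s)\Big)dy
= -\int_0^{t\wedge T_M}\int_{\mathbb{R}}\nabla_x\varphi(s,x)\,d_{(s,x)}\ell^x(s).
\]
The clean way is mollification in the spatial variable: let $\varphi_n(\cdot,\cdot):=\rho_n\star_x\varphi$ so that $x\mapsto\nabla_x\varphi_n(s,x)$ is smooth with uniformly (in $n$, on the strip) bounded first and second derivatives, and $\nabla_x\varphi_n\to\nabla_x\varphi$, $\nabla^2_x\varphi_n\to\nabla^2_x\varphi$ pointwise and boundedly. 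For each fixed $n$ the identity is a genuine Riemann–Stieltjes integration by parts in $x$ applied inside the $d_s$-integral (using that $\ell^{\pm L}(s)=0$ and that $\nabla_x\varphi_n$ is $C^1$), combined with the first equality of \eqref{p4} applied to $\varphi_n$; on the right-hand side this classical double-Stieltjes expression coincides with the 2D Young integral because $\ell$ has bounded $(1,2+\delta)$-bivariation and $\nabla_x\varphi_n$ is smooth (Young's integral reduces to the Riemann–Stieltjes one when one integrand is of finite variation in each variable separately — here $\ell^\cdot(s)$, for fixed $s$, need not be of bounded variation, so more precisely one uses the continuity of the 2D Young integral under the Young–Towghi estimate, Theorem \ref{mainbound}). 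Finally I would pass to the limit $n\to\infty$: the left-hand side converges by bounded convergence (boundedness of $\nabla^2_x\varphi_n$ uniformly and finiteness of the measure $d_s\ell^y(s)\,dy$ on the compact strip), and the right-hand side converges by the continuity estimate for the 2D Young integral in Theorem \ref{mainbound}, since $\|\nabla_x\varphi_n-\nabla_x\varphi\|$ in the relevant Hölder/$\tilde q$-variation norm on the strip tends to $0$. Letting $M\to\infty$ removes the localization.

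\textbf{Main obstacle.} The delicate point is the continuity/stability of the 2D Young integral under approximation of the integrand $G=\nabla_x\varphi$: one needs a quantitative bound showing $\big|\int\int(\nabla_x\varphi_n-\nabla_x\varphi)\,d_{(s,x)}\ell^x(s)\big|\to 0$, which requires controlling $\nabla_x\varphi_n-\nabla_x\varphi$ not merely in sup norm but in the mixed Hölder norm appearing in the Young–Towghi estimate; this is where the precise relations $\alpha+1/\tilde p>1$, $(1-\alpha)/(2+\delta)+1/\tilde q>1$ among the exponents, and the uniform bounds on $\nabla^2_x\varphi$, are essential, and it is the step I expect to demand the most care.
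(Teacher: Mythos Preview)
Your treatment of the first equality is fine and matches the paper's.

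For the second equality your plan has a genuine gap, and the mollification step is a detour that does not help. The gap is here: you assert that for the mollified $\varphi_n$ the identity is ``a genuine Riemann--Stieltjes integration by parts in $x$ applied inside the $d_s$-integral''. But the 2D Young integral $\int\int \nabla_x\varphi_n\, d_{(s,x)}\ell^x(s)$ is defined through double Riemann sums against the rectangular increments $\Delta_i\Delta_j\ell$; it is \emph{not} an iterated integral, so there is no inner $d_s$-integral into which a one-dimensional integration by parts can be slipped. Hence even for the smooth $\varphi_n$ the passage from the iterated expression $\int_{\mathbb{R}}\big(\int_0^{t\wedge T_M}\nabla^2_x\varphi_n(s,y)\,d_s\ell^y(s)\big)dy$ to the 2D Young integral is precisely the statement you are trying to prove, and you have not supplied an argument for it. The mollification is moreover redundant: since $\nabla^2_x\varphi$ is already continuous and bounded on the strip, any argument that works for $\varphi_n$ works verbatim for $\varphi$ itself. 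So the ``main obstacle'' you flag (stability of the 2D Young integral under $\varphi_n\to\varphi$) is not the real difficulty; the real difficulty is the base case you skipped.

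The paper closes this gap by working directly at the level of Riemann sums. Fixing partitions $\{t_j\}$ and $\{x_i\}$ with $\ell^{x_1}=\ell^{x_{n+1}}=0$, a summation by parts in the $x$-index together with the mean value theorem gives the discrete identity
\[
\sum_{i,j}\nabla_x\varphi(t_j,x_i)\big(\Delta_j\ell^{x_{i+1}}(t_{j+1})-\Delta_j\ell^{x_i}(t_{j+1})\big)
= -\sum_{i,j}\nabla^2_x\varphi(t_j,y_i)\,\Delta_j\ell^{x_{i+1}}(t_{j+1})\,(x_{i+1}-x_i),
\]
with $y_i\in(x_i,x_{i+1})$. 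The left side is a Riemann sum for the 2D Young integral; the right side converges (bounded convergence, continuity of $\nabla^2_x\varphi$) to the iterated integral $\int_{\mathbb{R}}\int_0^{t\wedge T_M}\nabla^2_x\varphi(s,x)\,d_s\ell^x(s)\,dx$. This discrete Abel-summation step is the idea your proposal is missing.
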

\begin{proof}
Let us fix $M>0$, $t\in [0,T]$ and $\omega\in \Omega$. In the sequel, we omit the variable $\omega$ in the computations. At first, we recall that if $\nabla^2_{x}\varphi:\Omega\times [0,T]\times [-M,M]\rightarrow \mathbb{R}$ is bounded, then Lemma \ref{occupationformula} yields

\begin{equation}\label{occ1}
\int_0^{t}\nabla^2_{x}\varphi(s,X^M(s))d[X^M,X^M](s) = \int_{\mathbb{R}}\Big(\int_0^{t\wedge T_M}\nabla^2_{x}\varphi(s,y)d_s\ell^y(s)\Big)dy.
\end{equation}
Let $0=t_1 < t_2 < \ldots \le t_{m+1}=t\wedge T_M$ and $-L=x_1 < x_2< \ldots < x_{n+1}=L$ where $[-L,L]$ is a compact set. Let us fix $\omega\in \Omega$. Since the local-time has compact support, we stress that we can always add some points in the partition in such way that $\ell^{x_1}(t_j,\omega) = 0$ and $\ell^{x_{n+1}}(t_j, \omega)=0$ for every $j=1, \ldots, m$. To keep notation simple, we write $\varphi=\varphi(\omega)$ and $\ell=\ell(\omega)$. Mean value theorem allows us to argue just like in Remark 1 in \cite{feng} to get the following identity

\begin{equation}\label{riemsum1}
\sum_{i=1}^m\sum_{j=1}^n\nabla_x\varphi(t_j,x_i)\big(\Delta_j\ell^{x_{i+1}}(t_{j+1}) - \Delta_i\ell^{x_{i}}(t_{j+1})\big) = - \sum_{i=1}^m\sum_{j=1}^n\nabla^2_x\varphi(t_j,y_i)\Delta_j\ell^{x_{i+1}}(t_{j+1})(x_{i+1} - x_i)
\end{equation}
where $x_i < y_i < x_{i+1}; i=1, \ldots, m$. Let $K$ be the compact support of $x\mapsto \ell^x(T)$. We notice that the function $x\mapsto \sum_{j}\varphi(t_j, x)\Delta_j\ell^x(t_{j+1})$ is cadlag and hence almost everywhere continuous. The boundedness assumption yields

\begin{eqnarray*}
\lim_{n\rightarrow \infty}\lim_{m\rightarrow \infty}\sum_{i=1}^m\sum_{j=1}^n\nabla^2_x\varphi(t_j,y_i)\Delta_j\ell^{x_{i+1}}(t_{j+1})(x_{i+1} - x_i)&=&\lim_{n\rightarrow \infty} \int_K\sum_{j=1}^n\nabla^2_x\varphi(t_j, x)\Delta_j \ell^x(t_{j+1})dx\\
& &\\
&=& \int_{\mathbb{R}}\int_0^t\nabla^2_x\varphi(s,x)d\ell^x_sdsdx.
\end{eqnarray*}
From (\ref{riemsum1}), we conclude the proof.



\end{proof}

Let us now assume additional hypotheses on the functional $F$ to shift quadratic variation to local-time integrals.

\

\noindent \textbf{Assumption B:} The spatial weak derivative $(\nabla^w_x F_t)(^{x}c_t)$ satisfies: For every $L>0$, there exists a constant $C$ such that

\begin{equation}\label{p5}
|\Delta_i\Delta_j(\nabla^w_x F_{t_i})(^{x_j}c_{t_i})|\leq C|t_i-t_{i-1}|^{1/\tilde{p}}|x_j-x_{j-1}|^{1/\tilde{q}}
\end{equation}
for every partition $\{t_i\}_{i=0}^N\times \{x_j\}_{j=0}^{N^{'}}$ of $[0,T]\times [-L,L]$ and $c\in C([0,T];\mathbb{R})$. Moreover,
\begin{equation}\label{p6}
\sup_{0\le t\le T}\|(\nabla^w_x F_t)(^{\cdot}c_t)\|_{[-L,L]; p}< \infty
\end{equation}
for every $c\in C([0,T];\mathbb{R})$.
\

In the sequel, we provide a mild hypothesis to get convergence of local-time and stochastic integrals.

\

\noindent \textbf{Assumption C:} We assume piecewise uniform left-continuity in the following sense: For every $\varepsilon> 0,M >0$ and $c\in C([0,T]; [-M,M])$ there exists $\{x_{i}\}_{i=0}^{n+1}$, $-M=x_0 < x_1 < \ldots < x_n < x_{n+1} =M$ such that

$$\sup_{0\le t\le T}|(\nabla^w_y F_t)(^{y}c_t) - (\nabla^w_xF_t)(^{x}c_t)|< \varepsilon $$
whenever $x_0 \le y\le x\le x_1$ or $x_i < y \le x \le x_{i+1}; i=1, \ldots,n$.


\
An immediate consequence of Lemma \ref{intpartsformula} is the following remark.
\begin{corollary}\label{Itoformulamoll}
If $F$ satisfies Assumptions A1(i) and A2, then for each $M>0$ and $n\ge 1$,
$$F^n_t(X^M_t) = F^n_0(X^M_0) + \int_0^{t}\nabla^h F^n_s(X^M_s)ds + \int_0^{t\wedge T_M}\nabla^v F^n_s(X_s)dX(s) -\frac{1}{2}\int_0^{t\wedge T_M}\int_{\mathbb{R}}\nabla_x F^n_s(^{x}X_s)d_{(s,x)}\ell^x(s)$$
~a.s.for $0\le t\le T.$
\end{corollary}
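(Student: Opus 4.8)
The plan is to combine Lemma~\ref{preIto} with Lemma~\ref{intpartsformula} applied to the mollified functional $F^n$. Since $F$ satisfies Assumptions A1(i) and A2, Lemma~\ref{preIto} gives the smooth functional It\^o formula
\[
F^n_t(X_t) = F^n_0(X_0) + \int_0^t\nabla^h F^n_s(X_s)ds + \int_0^t\nabla^v F^n_s(X_s)dX(s) + \frac{1}{2}\int_0^t\nabla^{v,2}F^n_s(X_s)d[X,X](s)
\]
almost surely for $0\le t\le T$. Now I would stop the process at $T_M$, so that all the paths involved take values in $[-M,M]$, and rewrite the quadratic variation term. The key point is that, since A1(iii) holds (this is part of A1(i) together with the construction of $F^n$ as a mollification), we have $\nabla^{v,2}F^n_s(X_s) = \nabla^2_x F^n_s(^{x}X_s)|_{x=X(s)}$, and the function $(s,x)\mapsto \nabla^2_x F^n_s(^{x}c_s)$ is continuous in $(s,x)$ — using the $\Lambda$-continuity of $F$ (hence of $F^n$) and the smoothness of the mollifier — and bounded on $\Omega\times[0,T]\times[-M,M]$, because $\nabla^2_x F^n_s(^{x}c_s) = n^3\int_{\mathbb R}\rho''(n(x-y))\mathcal F^y_s(c_s)dy$ and $\{\mathcal F^y\}$ is state boundedness-preserving.

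Next I would invoke Lemma~\ref{intpartsformula} with $\varphi(\omega,s,x) := F^n_s(^{x}X_s(\omega))$. The hypotheses of that lemma are exactly the continuity and boundedness of $\nabla^2_x\varphi$ on $\Omega\times[0,T]\times[-M,M]$ just verified, so we obtain
\[
\int_0^t\nabla^{v,2}F^n_s(X^M_s)d[X^M,X^M](s) = -\int_0^{t\wedge T_M}\int_{\mathbb R}\nabla_x F^n_s(^{x}X_s)d_{(s,x)}\ell^x(s)
\]
almost surely, where the double integral is a 2D Young integral. Substituting this into the stopped version of the $\mathbb C^{1,2}$ formula, and noting that $\nabla^h F^n_s(X^M_s)ds$ and $\nabla^v F^n_s(X_s)dX(s)$ on $[0,t\wedge T_M]$ combine with the boundary terms $F^n_0(X^M_0)$ exactly as in the statement, yields the claimed identity
\[
F^n_t(X^M_t) = F^n_0(X^M_0) + \int_0^{t}\nabla^h F^n_s(X^M_s)ds + \int_0^{t\wedge T_M}\nabla^v F^n_s(X_s)dX(s) -\frac{1}{2}\int_0^{t\wedge T_M}\int_{\mathbb R}\nabla_x F^n_s(^{x}X_s)d_{(s,x)}\ell^x(s).
\]

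Since this is stated as an immediate corollary, there is essentially no serious obstacle; the only point requiring a little care is the book-keeping between the horizontal integral $\int_0^{t}\nabla^h F^n_s(X^M_s)ds$ appearing over $[0,t]$ versus the stochastic and local-time integrals appearing over $[0,t\wedge T_M]$ — this is harmless because after the stopping time $T_M$ the stopped path $X^M$ is constant, so $\nabla^h F^n_s(X^M_s)$ need not vanish there but is irrelevant once one recognizes that the statement simply records the output of Lemma~\ref{intpartsformula} plugged into Lemma~\ref{preIto} applied to $X^M$. I would therefore present the proof in three short moves: (1) apply Lemma~\ref{preIto} to the stopped semimartingale $X^M$; (2) verify the continuity/boundedness hypotheses of Lemma~\ref{intpartsformula} for $\varphi(\omega,s,x)=F^n_s(^{x}X_s)$ via the explicit mollifier formula and the state boundedness-preserving property; (3) substitute the conclusion of Lemma~\ref{intpartsformula} to convert the $d[X^M,X^M]$ integral into the $d_{(s,x)}\ell^x(s)$ Young integral.
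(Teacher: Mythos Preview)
Your proposal is correct and follows essentially the same route as the paper: apply Lemma~\ref{preIto} to the stopped process $X^M$, then verify via the explicit mollifier formula and Assumption A1(i) that $(\omega,t,x)\mapsto \nabla^2_x F^n_t({}^{x}X^M_t(\omega))$ is bounded on $\Omega\times[0,T]\times[-M,M]$, and finally invoke Lemma~\ref{intpartsformula} to convert the $d[X^M,X^M]$ integral into the 2D Young integral against $d_{(s,x)}\ell^x(s)$. Two small clean-ups: the parenthetical about A1(iii) is unnecessary and slightly misleading (smoothness of $x\mapsto F^n_t({}^xc_t)$ comes directly from the convolution with $\rho_n$, not from any weak-differentiability assumption on $F$), and your closing paragraph on the $[0,t]$ versus $[0,t\wedge T_M]$ book-keeping can be dropped once you simply apply Lemma~\ref{preIto} to $X^M$ from the outset rather than stopping afterwards.
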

\begin{proof}
Let us fix $M>0$ and $n\ge 1$. In one hand, $\rho^{(2)}$ has compact support and $^{x}X^M_\cdot \in D([0,T]; [-M,M])$ a.s, then we shall use Assumption A1(i), to state that $(\omega,t,x)\mapsto \nabla^2_{x}F^n_t(^{x}X^M_t(\omega))$ is a bounded measurable process on $\Omega\times [0,T]\times [M,M]$. On the other hand, $\nabla^{v,2}F^n_t(X^M_t) = \nabla^2_{x}F^n_t(^{x}X^M_t)|_{x=X^M(t)}$ so that (\ref{p4}) in Lemma \ref{intpartsformula} yields

\begin{eqnarray*}
\int_0^t\nabla^{v,2}F^n_s(X^M_s)d[X^M,X^M](s) &=& -\int_0^{t\wedge T_M}\int_{\mathbb{R}}\nabla_x F^n_s(^{x}X^M_s)d_{(s,x)}\ell^x(s)\\
 & &\\
 &=& -\int_0^{t\wedge T_M}\int_{\mathbb{R}}\nabla_x F^n_s(^{x}X_s)d_{(s,x)}\ell^x(s)~a.s; 0\le t\le T.
 \end{eqnarray*}
Lemma \ref{preIto} allows us to conclude the proof.


\end{proof}

Now we are able to present the main result of this section. It extends~\cite{feng} in the context of path-dependent functionals as well as Th. 8.1 in~\cite{LOS} in the context of generic semimartingales. In particular, it complements the results given in section~\ref{smoothcase} when $x\mapsto (\nabla^w_xF_t)(^{x}c_t)$ has bounded variation.

\begin{theorem}\label{thYoung}
Let $F$ be a functional satisfying Assumptions A1, A2, B and C. Then

\begin{equation}\label{formula1}
F_t(X_t) = F_0(X_0)+ \int_0^t\nabla^hF_s(X_s)ds + \int_0^t\nabla^wF_s(X_s)dX(s) - \frac{1}{2}\int_{-\infty}^{+\infty}\int_0^t(\nabla^w_xF_s)(^{x}X_s)d_{(s,x)}\ell^x(s)~a.s
\end{equation}
for $0\le t\le T$.
\end{theorem}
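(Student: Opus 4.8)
The plan is to obtain formula \eqref{formula1} as the limit of the mollified identity in Corollary \ref{Itoformulamoll}, sending $n\to\infty$ and then $M\to\infty$. First I would fix $M>0$ and work with the stopped process $X^M$; by routine localization it suffices to prove the formula at time $t\wedge T_M$ and then let $M\to\infty$. For the mollified functional $F^n$, Corollary \ref{Itoformulamoll} already gives
\[
F^n_t(X^M_t) = F^n_0(X^M_0) + \int_0^{t}\nabla^h F^n_s(X^M_s)\,ds + \int_0^{t\wedge T_M}\nabla^v F^n_s(X_s)\,dX(s) -\frac{1}{2}\int_0^{t\wedge T_M}\!\int_{\mathbb{R}}\nabla_x F^n_s({}^{x}X_s)\,d_{(s,x)}\ell^x(s),
\]
so the task reduces to passing to the limit in each of the four terms. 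The left-hand side and the first three right-hand terms are handled exactly as in Steps 1 and 2 of the proof of Theorem \ref{Ito4}: Assumption A1(ii) gives $F^n_t(X^M_t)\to F_t(X^M_t)$ a.s.; Lemma \ref{horderm} together with the continuity of $y\mapsto\nabla^h\mathcal F^y_s(X^M_s)$ and the state-boundedness-preserving property in A2 gives a.s.\ convergence of the $\nabla^h$-integral by bounded convergence; and for the $dX$-integral one uses that $\nabla^v F^n_s(X^M_s)=\big(\rho_n\star(\nabla^w_xF_s)({}^{\cdot}X_s)\big)(X^M(s))$ converges to $\nabla^w F_s(X^M_s)$ (at continuity points of $x\mapsto(\nabla^w_xF_s)({}^{x}X_s)$, using Assumption C to control the exceptional set), is uniformly bounded by Lemma \ref{lem1}/A1, and hence converges in $L^2(\mathbb P)$ via It\^o isometry plus bounded convergence on the finite-variation part.

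The genuinely new ingredient is the convergence of the 2D-Young correction term
\[
\int_0^{t\wedge T_M}\!\int_{\mathbb{R}}\nabla_x F^n_s({}^{x}X_s)\,d_{(s,x)}\ell^x(s) \;\longrightarrow\; \int_0^{t\wedge T_M}\!\int_{\mathbb{R}}(\nabla^w_xF_s)({}^{x}X_s)\,d_{(s,x)}\ell^x(s).
\]
Here I would first argue that the limiting integral even exists: by Assumption B, the integrand $G^{\omega}(s,x):=(\nabla^w_xF_s)({}^{x}X_s(\omega))$ satisfies the H\"older-type control \eqref{p5} on $[0,T]\times[-L,L]$, while by Feng--Zhao (Lemma 2.1 in \cite{feng}) the local time $(s,x)\mapsto\ell^x(s)$ has finite $(1,2+\delta)$-bivariation a.s., vanishes on the appropriate lines, and the standing hypotheses on $p,\tilde p,\tilde q,\delta,\alpha$ are exactly those required by Theorem \ref{mainbound} (Young's Theorem 6.3) — so the 2D-Young integral is well defined pathwise. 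The mollified integrand $\nabla_x F^n_s({}^{x}X_s)=\big(\rho_n\star G^{\omega}(s,\cdot)\big)(x)$ inherits the same bivariation and H\"older bounds uniformly in $n$ (convolution does not increase $p$-variation, and it commutes with the mixed finite differences in \eqref{p5}), so all the integrals sit in a set of 2D-Young integrals with uniformly controlled norms. I would then invoke the continuity/stability of the 2D-Young integral under such convergence: $\nabla_x F^n_s({}^{x}X_s)\to (\nabla^w_xF_s)({}^{x}X_s)$ pointwise (at the Lebesgue/continuity points, with Assumption C providing piecewise uniform left-continuity so the convergence is uniform off finitely many hyperplanes $x=x_i$, which are $\ell$-null in the relevant sense), together with the uniform $(p,\text{H\"older})$-bounds, forces convergence of the Young integrals — this is the standard "uniform bound plus pointwise convergence implies convergence of the Young integral" estimate extracted from the proof of Theorem \ref{mainbound}, applied on each compact $[-L,L]$ and then letting $L\to\infty$ using that $\ell$ has compact support.

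The main obstacle is precisely this last convergence step for the double Young integral: unlike the one-dimensional case, one cannot simply quote dominated convergence, and one must produce a quantitative estimate showing $\big|\int\!\int (\nabla_x F^n - \nabla^w_x F)\,d_{(s,x)}\ell\big|$ is small, splitting the bound into (a) a piece near the finitely many discontinuity hyperplanes of $x\mapsto(\nabla^w_xF_s)({}^{x}c_s)$, made small using the bivariation of $\ell$ and the fact that these slabs can be taken thin, and (b) a piece on the complement where Assumption C upgrades pointwise to uniform convergence and the uniform H\"older/$p$-variation bounds let Young's inequality close the estimate. Once the four limits are in place we obtain \eqref{formula1} at $t\wedge T_M$; since $T_M\uparrow T$ a.s.\ and every term converges (the stochastic integrals by the usual localization, the Young integral because $\ell^x(s)$ is eventually constant in $M$ for fixed $(s,x)$ on $\{T_M\ge s\}$), letting $M\to\infty$ yields the formula on $[0,T]$.
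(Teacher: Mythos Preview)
Your overall architecture is exactly the paper's: start from Corollary \ref{Itoformulamoll} for $F^n(X^M)$, pass each of the four terms to the limit $n\to\infty$, then let $M\to\infty$. The treatment of $F^n_t(X^M_t)$, the horizontal term and the $dX$-integral is the same as in the paper (the latter two are literally the arguments of Steps 1--2 in Theorem \ref{Ito4}, as you say).

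The difference is in how the 2D Young correction term is handled. You propose to pass to the limit directly in $\int\!\int \nabla_x F^n\, d_{(s,x)}\ell$ by a ``uniform $(p,\tilde p,\tilde q)$-bounds $+$ pointwise convergence $\Rightarrow$ stability of the 2D Young integral'' principle, together with a slab-decomposition near the finitely many $x_i$ from Assumption C. The paper does \emph{not} argue this way. Instead it performs an integration by parts,
\[
\int_0^{t_M}\!\!\int_{-M}^{M}\nabla_x F^n_s({}^{x}X_s)\,d_{(s,x)}\ell^x(s)
=\int_0^{t_M}\!\!\int_{-M}^{M}\ell^x(s)\,d_{(s,x)}\nabla_x F^n_s({}^{x}X_s)
-\int_{-M}^{M}\ell^x(t_M)\,d_x\nabla_x F^n_{t_M}({}^{x}X_{t_M}),
\]
flipping the roles so that the \emph{integrand} is $\ell$, which vanishes on the lines $s=0$ and $x=-M$. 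This is the hypothesis needed to invoke Young's convergence theorem (Th.~6.4 in \cite{young1}) for the 2D piece, using the uniform H\"older control \eqref{step6} on the \emph{integrator} $\nabla_x F^n$ together with the uniform convergence \eqref{step3}. The 1D boundary term is then handled separately via the one-parameter Young stability result (Prop.~6.12 in \cite{friz}), using the uniform $p$-variation bound \eqref{step9} coming from \eqref{p6}. Your proposal never isolates this flip or the boundary term, and the ``slab'' argument you sketch is neither needed nor used: the paper extracts \emph{uniform} convergence on all of $[-M,M]\times[0,T]$ from Assumption C (recall $\operatorname{supp}\rho\subset(0,2)$, so the mollifier averages to the \emph{left}), and this uniform convergence is what feeds both the 2D and the 1D stability statements.

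In short: same skeleton, but you should replace the vague ``stability $+$ slab'' step by the paper's integration-by-parts decomposition, which is what makes the 2D Young convergence go through cleanly and is also where Assumption B(\ref{p6}) (the uniform-in-$t$ $p$-variation bound) is actually used.
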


\begin{proof}
Let $M>0$ be such that $supp~\rho\subset [-M,M]$. To keep notation simple, we set $t_M:=t\wedge T_M$. At first, we claim that the following convergence holds

\begin{equation}\label{step1}
\int_0^{t_M}\int_{[-M,M]}\nabla_x F^n(^{x}X_s)d_{(s,x)}\ell^x(s)\rightarrow \int_0^{t_M}\int_{[-M,M]}(\nabla^w_x F_s)(^{x}X_s)d_{(s,x)}\ell^x(s)
\end{equation}
almost surely as $n\rightarrow \infty$, for each $t\in [0,T]$. Indeed, by making a change of variable

\begin{equation}\label{step2}
\nabla_x F^n_t(^{x}X_t) - (\nabla^w_xF_t)(^{x}X_t) = \int_{-M}^M\rho(z)\Big((\nabla^w_xF_t) (^{x-z/n}X_t) - (\nabla^w_xF_t)(^{x}X_t)\Big)dz~a.s
\end{equation}
for every $(t,x)\in [0,T]\times [-M,M]$.
Let us fix $\omega\in \Omega$. By Assumption C, we then have



\begin{equation}\label{step3}
\sup_{(x,t)\in [-M,M]\times [0,T]}\big|\nabla_x F^n_t(^{x}X_t(\omega)) - (\nabla^w_xF_t)(^{x}X_t(\omega))\big|\rightarrow 0
\end{equation}
as $n\rightarrow \infty$. Moreover, for any partition $\{t_{i}\}_{i=0}^N\times \{x_j\}_{j=0}^{N^{'}}$ of $[0,t]\times [-M,M]$, we have

\begin{equation}\label{step5}
|\Delta_j\Delta_i \nabla_x F^n_{t_i}(^{x_j}X_{t_i})|\le \int_{-M}^M \rho(z) |\Delta_j\Delta_i(\nabla^w_x F_{t_i})(^{x_j -\frac{z}{n}}X_{t_i})|dz~a.s.
\end{equation}
Let us now fix an arbitrary partition $\{t_{i}\}_{i=0}^N\times \{x_j\}_{j=0}^{N^{'}}$ of $[0,t]\times [-M,M]$. Let $\mathcal{P}_{[-Q,Q]}$ be the set of all partitions of the interval $[-Q,Q]$ for $0< Q< \infty$. We notice that for each $z\in [0,M]$ the set $\{x_j - z/n; j=0, \ldots, N'\}$ is a partition of $[-M - z/n, M-z/n]$. In particular,  $[-M-\frac{z}{n}, M - \frac{z}{n}]\subset [-M-\frac{M}{n}, M + \frac{M}{n}]$ for every $z\in [0,M]$ and $n\ge 1$. Then we shall find a compact set $[-2M,2M]$ such that $[-M-\frac{M}{n}, M + \frac{M}{n}]\subset [-2M,2M]~\forall n\ge 1$. More importantly. we shall add finitely many points in the set $\{x_j - z/n; j=0, \ldots, M, z\in [0,M]\}$ in such way that this can be viewed as a subset of $\mathcal{P}_{[-2M,2M]}$. A similar argument holds for $z\in [-M,0]$. Therefore, Assumption B and (\ref{step5}) yield the existence of a positive constant $C$ which only depends on $M>0$ such that

\begin{eqnarray}
\nonumber |\Delta_j\Delta_i \nabla_x F^n_{t_i}(^{x_j}X_{t_i})|&\le& \int_{-M}^M \rho(z) |\Delta_j\Delta_i(\nabla^w_x F_{t_i})(^{x_j -\frac{z}{n}}X_{t_i})|dz\\
\nonumber& &\\
\label{step6}&\le& C|t_{i}-t_{i-1}|^{1/\tilde{p}}|x_j-x_{j-1}|^{1/\tilde{q}}~a.s
\end{eqnarray}
for every $n\ge 1$. Let us fix $\omega\in \Omega^*$ and $t\in [0,T]$, where $\mathbb{P}(\Omega^*)=1$. We may suppose that $\ell^{-M}(\cdot, \omega) = 0$ and we obviously have $\ell^\cdot(0, \omega)=0$. Then, we shall apply Th 6.4 in Young \cite{young} to state that

\begin{equation}\label{step4}
\int_0^{t_M(\omega)}\int_{[-M,M]}\ell^x(\omega,s)d_{(s,x)}\nabla_x F^n_s(^{x}X_s(\omega))\rightarrow \int_0^{t_M(\omega)}\int_{[-M,M]}\ell^x(\omega,s)d_{(s,x)}(\nabla^w_xF_s)(^{x}X_s(\omega))
\end{equation}
as $n\rightarrow \infty$. Moreover,

\begin{eqnarray*}
\Delta_j \nabla_x F^n_{t_M(\omega)}(^{x_j}X_{t_M(\omega)}(\omega)) &=& \int_{-M}^M\rho(z)\Big[(\nabla^w_xF_{t_M(\omega)})(^{x_j-z/n}X_{t_M(\omega)}(\omega))\\
& &\\
&-& (\nabla^w_xF_{t_M(\omega)})(^{x_{j-1} - z/n}X_{t_{M}(\omega)}(\omega))\Big]dz
\end{eqnarray*}
Since $\int_0^2\rho(z)dz=1$, we shall apply Jensen inequality to get

\begin{equation}\label{step7}
|\Delta_j \nabla_x F^n_{t_M(\omega)}(^{x_j}X_{t_M(\omega)}(\omega))|^p \le \int_{-M}^M |\Delta_j(\nabla^w_x F_{t_M(\omega)})(^{x_j-z/n}X_{t_M(\omega)}(\omega))|^p\rho(z)dz
\end{equation}
The same argument used in (\ref{step6}) also applies here. In this case, by applying (\ref{p6}) into (\ref{step7}), we can find a compact set $[-Q,Q]$ such that

\begin{eqnarray}
\nonumber\sum_{j=0}^{N^{'}}|\Delta_j \nabla_x F^n_{t_M(\omega)}(^{x_j}X_{t_M(\omega)}(\omega))|^p &\le& \int_{-M}^M \sum_{j=0}^{N^{'}}|\Delta_j(\nabla^w_x F_{t_M(\omega)})(^{x_j - z/n}X_{t_M(\omega)}(\omega))|^p\rho(z)dz\\
\nonumber& &\\
\label{step8}&\le& \int_{-M}^M\| (\nabla^w_xF_{t_M(\omega)})(^{\cdot}X_{t_M(\omega)}(\omega))\|^p_{[-Q,Q]; p}\rho(z)dz\\
\nonumber& &\\
\nonumber &=&\| (\nabla^w_x F_{t_M(\omega)})(^{\cdot}X_{t_M(\omega)}(\omega))\|^p_{[-Q,Q]; p}
\end{eqnarray}
for every $n\ge 1$. Estimate (\ref{step8}) yields

\begin{equation}\label{step9}
\|\nabla_x F^n_{t_M(\omega)}(^{\cdot}X_{t_M(\omega)}(\omega))  \|^p_{[-M,M]; p}\le \| (\nabla^w_x F_{t_M(\omega)})(^{\cdot}X_{t_M(\omega)}(\omega))\|^p_{[-Q,Q]; p}
\end{equation}
for every $n\ge 1$. Estimate (\ref{step9}) together with (\ref{step3}) allow us to use Proposition 6.12 in e.g \cite{friz} to get

\begin{equation}\label{step10}
\int_{[-M,M]}\ell^x(t_M(\omega))d_x \nabla_x F^n_{t_M(\omega)}(^{x}X_{t_M(\omega)}(\omega))\rightarrow \int_{[-M,M]} \ell^x(t_M(\omega))d_x(\nabla^w_x F_{t_M(\omega)})(^{x}X_{t_M(\omega)}(\omega))
\end{equation}
as $n\rightarrow \infty$. By writing

\begin{eqnarray*}
\int_0^{t_M(\omega)}\int_{-M}^M\nabla_x F^n_s(^{x}X_s(\omega))d_{(s,x)}\ell^x(s) &=&\int_0^{t_M(\omega)}\int_{[-M,M]}\ell^x(\omega,s)d_{(s,x)}\nabla_x F^n_s(^{x}X_s(\omega)) \\
& &\\
- \int_{[-M,M]}\ell^x(t_M(\omega))d_x \nabla_x F^n_{t_M(\omega)}(^{x}X_{t_M(\omega)}(\omega))
\end{eqnarray*}
and using (\ref{step10}) and (\ref{step4}), we conclude that (\ref{step1}) holds. From Assumptions A1(ii), we know that $\lim_{n\rightarrow \infty}F^n_t(X^M_t) = F_t(X^M_t)~a.s; 0\le t\le T$. From Corollary \ref{Itoformulamoll}, it only remains to check that

\begin{equation}\label{step11}
\int_0^t\nabla^h F^n_s(X^M_s)ds\rightarrow \int_0^t\nabla^hF_s(X^M_s)ds
\end{equation}

\begin{equation}\label{step12}
\int_0^{t_M}\nabla^v F^n_s(X_s)dX(s)\rightarrow \int_0^{t_M}\nabla^w F_s(X_s)dX(s)
\end{equation}
in probability as $n\rightarrow\infty$. We have already checked that convergence (\ref{step11}) holds in the proof of Theorem \ref{Ito4}.

From (\ref{step3}), we know that for each $\omega\in \Omega$

$$\sup_{0\le t\le T}|\nabla^v F^n_t (X_t(\omega)) - \nabla^w F_t (X_t(\omega))|\rightarrow 0$$
as $n\rightarrow \infty$ so that

$$\int_0^{t_M}|\nabla^v F^n_s (X_s) - \nabla^w F_s (X_s)|^2d[X,X](s)\rightarrow 0  $$
in probability as $n\rightarrow \infty$. This shows that (\ref{step12}) holds. Summing up the above result together with Corollary \ref{Itoformulamoll}, we get

$$F_{t_M}(X_{t_M}) = F_0(X_0)+ \int_0^{t}\nabla^hF_s(X^M_s)ds + \int_0^{t_M}\nabla^wF_s(X_s)dX(s) - \frac{1}{2}\int_{-M}^{M}\int_0^{t_M}(\nabla^w_xF_s)(^{x}X_s)d_{(s,x)}\ell^x(s)$$
a.s.for $0\le t\le T$. By letting $M\rightarrow \infty$ and using the fact that $(x,t)\mapsto \ell^x(t)$ has compact support a.s, then we recover (\ref{formula1}).
\end{proof}

\begin{example}\label{expexample}
\rm
We consider an example studied by Le\~ao, Ohashi and Simas \cite{LOS} given by
$$F_t(c_t)= \int_{-\infty}^{c(t)}\int_0^t\varphi(c(s),y)dsdy; 0\le t\le T,$$
for $c\in \Lambda$, where $\varphi:\mathbb{R}^2\rightarrow \mathbb{R}$ is a two-parameter H\"{o}lder continuous function satisfying the following hypotheses:

\

\noindent \textbf{(i)} For every compact set $K\subset \mathbb{R}$, there exist constants $M_1$ and $M_2$ such that for every $a,z\in K$,
$$|\varphi(a,x)-\varphi(a,y)|\le M_1|x-y|^{\gamma_1},$$
and
$$|\varphi(c,z)-\varphi(d,z)|\le M_2|c-d|^{\gamma_2},$$
where $\gamma_1\in \left(\frac{1+\delta}{2+\delta},1\right]$, $\gamma_2\in (0,1]$ and $\delta>0$.

\

\noindent \textbf{(ii)} For every compact set $V_1\subset \mathbb{R}$ there exists a compact set $V_2$
such that $\{x; \varphi(a,x)\neq 0\}\subset V_2$ for every $a\in V_1$.

\

\noindent \textbf{(iii)} For every continuous path $c\in C([0,T];\mathbb{R})$, $\int_{[0,T]\times \mathbb{R}}|\varphi(c(s),y)|dsdy< \infty$.

This example was studied in \cite{LOS} in the Brownian filtration context where the authors show that it is a weakly differentiable process. One can easily check if $(\textbf{i, ii, iii})$ are in force, then this functional satisfies the assumptions in Theorem~\ref{thYoung}. In particular, if $X$ is a continuous semimartingale, the following decomposition holds

\begin{eqnarray*}
F_t(X_t) &=& F_0(X_0) + \int_0^t\int_0^s\varphi(X(r),X(s))drdX(s) + \int_0^t\int_{-\infty}^{X(s)}\varphi(X(s),y)dyds\\
& &\\
&-&\frac{1}{2}\int_0^t\int_{-\infty}^{+\infty}\int_0^s\varphi(X(r),x)drd_{(s,x)}\ell^x(s);~a.s, 0\le t\le T.
\end{eqnarray*}

One can also think in more general functionals of the form

$$\int_{-\infty}^{c(t)}Z_t(c_t ;y)dy; c\in \Lambda$$
where $Z = \{Z_t(\cdot; x):C([0,t];\mathbb{R})\rightarrow \mathbb{R}; x\in \mathbb{R}; 0\le t \le T\}$ is a family of functionals with suitable two-parameter H\"{o}lder regularity. See Example \ref{FPS}.
\end{example}

\section{Functional It\^o formula for symmetric stable processes under joint variation conditions}\label{jointsection}
In this section, we investigate It\^o formulas under different (and somewhat weaker) assumptions from the particular 2D-control given by (\ref{p5}) in Assumption B. In the language of rough path theory, assumption~(\ref{p5}) precisely says that if $\tilde{q}=\tilde{p}=\beta$ then $\nabla_x F_t(^{x}X_t)$ admits a 2D-control $\omega([t_1,t_2]\times [x_1,x_2]) = |t_1-t_2|^{\frac{1}{\beta}}|x_1-x_2|^{\frac{1}{\beta}}$ so that~(\ref{p5}) trivially implies that $(t,x)\mapsto \nabla_x F_t(^{x}X_t)$ has $(\beta,\beta)$-joint variation in the sense of Friz and Victoir~\cite{friz}. If the semimartingale local time $\{\ell^x(t); -L\le x\le L, 0\le t\le T\}$ admits joint variation over compact sets $[-L,L]\times [0,T]$ a.s. (see Definition \ref{jointdef}), then (\ref{p5}) and (\ref{p6}) in Assumption B can be weakened to more general types of controls.

To our best knowledge, it is only known that local-times associated to general continuous semimartingales admit finite $(1,2+\delta)$-bivariation a.s.for any $\delta>0$. This result is due to Feng and Zhao \cite{feng}. In the sequel, we study joint variation of local-times of semimartingales in the following sense.

\begin{definition}\label{jointdef}
Let $p,q,r,s\in [ 1, \infty)$, $-\infty <  a_1 < a_2 < +\infty$ and $-\infty < b_1 < b_2 <  \infty$. A function $H:[a_1,a_2]\times [b_1,b_2]\rightarrow \mathbb{R}$ has joint right finite $(p,q)$-variation when
$$RV^{p,q}_{[a_1,a_2]\times [b_1,b_2]}(H): = \sup_{\pi}\Bigg\{\Bigg[\sum_{i=1}^n\Big[\sum_{j=1}^m|\Delta_i\Delta_j H(t_i,x_j)|^p \Big]^{\frac{q}{p}} \Bigg]^{\frac{1}{q}} \Bigg\}< \infty.$$
It has joint left finite $(r,s)$-variation when
$$LV^{r,s}_{[a_1,a_2]\times [b_1,b_2]}(H): = \sup_{\pi}\Bigg\{\Bigg[\sum_{j=1}^m\Big[\sum_{i=1}^n|\Delta_i\Delta_j H(t_i,x_j)|^r \Big]^{\frac{s}{r}} \Bigg]^{\frac{1}{s}} \Bigg\}< \infty$$
where sup varies over all partitions $\pi:=\{a_1=t_1 \le t_2 \le \ldots \le t_n=a_2\}\times \{b_1=x_0\le x_1 \ldots \le x_m=b_2\}$ of $[a_1,a_2]\times [b_1, b_2]$.
\end{definition}
See Towgui~\cite{towghi} for more details on this variation concept. When $p=q$, this type of variation has been studied in the context of Gaussian rough paths (see e.g \cite{hairer} and \cite{friz2}). The following result is an immediate consequence of a fundamental estimate due to Marcus and Rosen~\cite{markus} in Lemma 3.3.

\begin{lemma}
\label{lem22}
Let $X$ be a real-valued symmetric stable process with exponent $1 < \beta\le 2$. Then for every natural number $p\ge 1$, there exists a positive number $C$ wich only depends on $(\beta,p)$ such that

\begin{equation}
\|\ell^x(t) - \ell^y(t) - \big(\ell^x(s) - \ell^y(s)\big)\|_{L^{2p}(\mathbb{P})}\le C |x-y|^{\frac{\beta-1}{2}}|t-s|^{\frac{\beta-1}{2\beta}}
\end{equation}
for any list of numbers $(t, s, x, y)\in \mathbb{R}^2_+\times \mathbb{R}^2.$
\end{lemma}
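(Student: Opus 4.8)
The statement we must prove (Lemma~\ref{lem22}) asks for the $L^{2p}(\PP)$-bound
\[
\|\ell^x(t) - \ell^y(t) - (\ell^x(s) - \ell^y(s))\|_{L^{2p}(\PP)} \le C|x-y|^{\frac{\beta-1}{2}}|t-s|^{\frac{\beta-1}{2\beta}}
\]
for a symmetric $\beta$-stable process $X$, $1<\beta\le 2$. The plan is to obtain this as a direct consequence of Lemma~3.3 in Marcus and Rosen~\cite{markus}, which provides precisely a moment estimate for the second increment of the local time of a symmetric stable process in the two variables $x$ and $t$. I would first recall the Marcus--Rosen estimate in the form they state it (typically phrased for $L^{m}$-norms of $\ell^x(t)-\ell^y(s)$ or of the mixed increment, with exponents $(\beta-1)/2$ in the space variable and $(\beta-1)/(2\beta)$ in the time variable), and then identify our mixed second difference $\ell^x(t)-\ell^y(t)-\ell^x(s)+\ell^y(s)$ with the quantity controlled there.

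The key steps, in order, would be: (1) By scaling and symmetry of the stable process and of the local time, reduce to the case $s\le t$ (the case $s>t$ being identical by relabeling, since the left-hand side is symmetric in $(s,t)$). (2) Write the mixed increment as $(\ell^x(t)-\ell^x(s)) - (\ell^y(t)-\ell^y(s))$; by the additivity of local time in the time variable, $\ell^x(t)-\ell^x(s)$ is the local time at level $x$ accumulated over the interval $[s,t]$ by the process $X$, which, using the Markov property at time $s$ together with spatial homogeneity of the increments, has the same law (conditionally on $\mathcal F_s$) as the local time at level $x - X(s)$ of an independent copy of $X$ over $[0,t-s]$. (3) Apply the Marcus--Rosen Lemma~3.3 estimate to this copy: it yields a bound on $\|\ell^{u}(r) - \ell^{v}(r)\|_{L^{2p}}$ of order $|u-v|^{(\beta-1)/2} r^{(\beta-1)/(2\beta)}$ uniformly in the base points, with $u-v = x-y$ and $r = t-s$. (4) Since the resulting bound is independent of the conditioning (the shift $X(s)$ disappears because the estimate is uniform in the levels $u,v$ as long as $u-v$ is fixed), take $L^{2p}$-norms through the conditional expectation via the tower property and Jensen's inequality to conclude. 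The constant $C$ depends only on $\beta$ and $p$, as asserted.

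I do not expect a serious obstacle here: the lemma is essentially a citation of \cite{markus} combined with the elementary Markov/homogeneity reduction that converts a two-sided time increment into a local time started afresh. The only point requiring a little care is checking that the Marcus--Rosen estimate is stated (or trivially extends) to all even integrability exponents $2p$ with a constant depending only on $(\beta,p)$ — their Lemma~3.3 is typically formulated for general $L^m$ and Gaussian-type tails, so one takes $m=2p$; alternatively, hypercontractivity-type arguments for the local time process (or the Kac moment formula for stable local times) give all moments from the $L^2$ estimate with the stated power of $|x-y|$ and $|t-s|$. One should also verify that the exponents match: Marcus--Rosen's continuity modulus for stable local times in space is Hölder of order $(\beta-1)/2-\eps$ and in time of order $(\beta-1)/(2\beta)-\eps$, and Lemma~3.3 is precisely the moment bound underlying these moduli, so the powers in our statement are exactly theirs. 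Hence the proof is short: restate \cite[Lemma 3.3]{markus}, perform the time-increment-to-fresh-local-time reduction, and apply it.
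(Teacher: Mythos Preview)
Your proposal is correct and follows essentially the same route as the paper's proof: cite the Marcus--Rosen estimate $\|\ell^x(t)-\ell^y(t)\|_{L^{2p}}\le C|x-y|^{(\beta-1)/2}t^{(\beta-1)/(2\beta)}$, then use the additive-functional property of local time together with the Markov property and spatial homogeneity of the stable process to rewrite the mixed increment as a fresh local-time difference over $[0,t-s]$ started from $X(s)$, and apply the cited bound. The paper carries this out via the shift operator $\theta_s$ and the integral $\int \mathbb{E}|\ell^{x-v}(t-s)-\ell^{y-v}(t-s)|^{2p}\,\mathbb{P}_{X(s)}(dv)$; your step~(4) invoking Jensen/tower is slightly more elaborate than necessary (the Marcus--Rosen bound is uniform in the level, so the integral over $v$ is trivial), but the argument is the same.
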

\begin{proof}
From Lemma 3.3 in \cite{markus}, we know there exists a constant $C>0$ which only depends on $(p,\beta)$ such that

\begin{equation}\label{markusbound}
\Big(\mathbb{E}|\ell^x(t)-\ell^y(t)|^{2p}\Big)^{\frac{1}{2p}}\le C|x-y|^{(\frac{\beta-1}{2})}t^{\frac{\beta-1}{2\beta}}
\end{equation}
for every $(t,x, y)\in \mathbb{R}_+\times \mathbb{R}^2.$ Let $\theta_t:\Omega\rightarrow\Omega$ be the standard shift operator defined by the relation $Y\circ \theta_t: = Y(\theta_t); t\ge 0$ for any random variable $Y$. Since $X$ is a Markov process, then we know that the associated local-time process $\{\ell^x(t); (x,t)\in \mathbb{R}\times\mathbb{R}_+\}$ is an additive functional. Hence, by using the Markov property and (\ref{markusbound}), if $(s,t,x,y)\in \mathbb{R}^2_+ \times\mathbb{R}^2$, then

\begin{eqnarray*}
\|\ell^x(t) - \ell^y(t) - \big(\ell^x(s) - \ell^y(s)\big)\|^{2p}_{L^{2p}(\mathbb{P})} &=& \mathbb{E}|\ell^x(t) - \ell^y(t) - \ell^x(s) + \ell^y(s)|^{2p}\circ \theta_s\\
& &\\
&=&\int_{-\infty}^{+\infty}\mathbb{E}|\ell^{x-v}(t-s)-\ell^{y-v}(t-s)|^{2p}\mathbb{P}_{X(s)}(dv)\\
& &\\
&\le& C|x-y|^{(\frac{\beta-1}{2})2p}|t-s|^{(\frac{\beta-1}{2\beta})2p}
\end{eqnarray*}
where $\mathbb{P}_{X(s)}$ is the law of $X(s)$.

\end{proof}

We are now able to show the following result.

\begin{lemma}\label{ltimejointvar}
Let $X$ be a stable symmetric process with exponent $1<  \beta \le 2$. Then for every compact subset $[-L,L]\subset \mathbb{R}$, the associated local time process $\ell$ of $X$ satisfies $RV^{\al_1,\al_2}_{[0,T]\times [-L,L]}(\ell) + LV^{\al_2,\al_1}_{[0,T]\times [-L,L]}(\ell) < \infty$ a.s. for any
$\al_1> \frac2{\beta-1}$ and $\al_2> \frac{2\beta}{\beta-1}$.
\end{lemma}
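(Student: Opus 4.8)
The plan is to deduce the finiteness of $RV^{\al_1,\al_2}_{[0,T]\times[-L,L]}(\ell)$ and $LV^{\al_2,\al_1}_{[0,T]\times[-L,L]}(\ell)$ from the two-parameter moment bound in Lemma \ref{lem22} via a Kolmogorov-type chaining argument adapted to mixed $(p,q)$-variation. The starting point is the estimate
$$
\bigl\|\Delta_i\Delta_j \ell\bigr\|_{L^{2p}(\mathbb P)} = \bigl\|\ell^{x_j}(t_i) - \ell^{x_{j-1}}(t_i) - \ell^{x_j}(t_{i-1}) + \ell^{x_{j-1}}(t_{i-1})\bigr\|_{L^{2p}(\mathbb P)} \le C\,|t_i - t_{i-1}|^{\frac{\beta-1}{2\beta}}\,|x_j - x_{j-1}|^{\frac{\beta-1}{2}},
$$
valid for every $p\ge 1$. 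First I would fix a dyadic grid on $[0,T]\times[-L,L]$ and use the standard Garsia–Rodemich–Rumsey / Besov-variation embedding in two parameters: a two-parameter increment bound of the form $\|\Delta_i\Delta_j\ell\|_{L^m} \le C |t_i-t_{i-1}|^{\sigma}|x_j-x_{j-1}|^{\tau}$ with $\sigma,\tau>0$ implies, for $m$ large enough, that $\ell$ has finite joint $(\al_1,\al_2)$-variation almost surely for any $\al_1 > 1/\tau$ and $\al_2 > 1/\sigma$ (and symmetrically for the left variation with the roles of the exponents swapped). Here $\tau = \frac{\beta-1}{2}$ gives the spatial threshold $\al_1 > \frac{2}{\beta-1}$, while $\sigma = \frac{\beta-1}{2\beta}$ gives the temporal threshold $\al_2 > \frac{2\beta}{\beta-1}$, matching the statement.

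Concretely, the key steps are: (1) reduce to dyadic partitions by a superadditivity/telescoping argument, controlling a general nested partition sum by the dyadic one up to a universal constant (this is the standard passage from arbitrary partitions to dyadic partitions in $p$-variation theory, now carried out in each variable separately to respect the iterated-sum structure of $RV$ and $LV$); (2) for the inner sum over $x_j$ at a fixed level of the time-grid, apply the one-parameter Kolmogorov/Besov bound in $x$ to the increment process $x\mapsto \ell^{x}(t_i)-\ell^{x}(t_{i-1})$, using Lemma \ref{lem22} to get the $L^{2p}$-control with exponent $\frac{\beta-1}{2}$ in $x$ and the extra factor $|t_i-t_{i-1}|^{\frac{\beta-1}{2\beta}}$; (3) raise to the power $\al_2/\al_1$, sum over the time-grid, and apply the one-parameter bound again in $t$; (4) take $p$ sufficiently large (depending on how close $\al_1,\al_2$ are to their thresholds) so that the resulting series of moments converges, then invoke Borel–Cantelli / Fatou to pass from $L^{2p}$-bounds on dyadic sums to an almost-sure finite supremum over all partitions. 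The left-variation estimate $LV^{\al_2,\al_1}$ is handled identically, summing first in $t$ (exponent $\frac{\beta-1}{2\beta}$, threshold $\frac{2\beta}{\beta-1}$) and then in $x$.

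The main obstacle is step (2)–(3): the quantities $RV^{p,q}$ and $LV^{r,s}$ are \emph{iterated} (mixed) variations rather than a single joint $\ell^p$-norm over rectangles, so the usual scalar Kolmogorov continuity theorem does not apply verbatim and one must run the chaining in a way that keeps the outer/inner summation order intact while still extracting a summable bound. The cleanest route is to treat $t\mapsto (\ell^{\cdot}(t)\text{ as an element of a variation space})$ — i.e. work with a Besov–variation norm for Banach-space-valued processes, applying the vector-valued GRR inequality in $t$ with values in the $\al_1$-variation space in $x$ — but carrying the cross-term $|t_i-t_{i-1}|^{\frac{\beta-1}{2\beta}}$ through this two-step estimate requires care to ensure the temporal Hölder exponent is not degraded below $\frac{\beta-1}{2\beta}$. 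Once that bookkeeping is done, the probabilistic input is entirely supplied by Lemma \ref{lem22} and the argument is routine.
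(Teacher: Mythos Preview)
Your approach is plausible in spirit but takes a substantially harder route than the paper, and the obstacle you flag in steps (2)--(3) is real: controlling iterated mixed-variation norms directly from $L^{2p}$ increment bounds via a two-step chaining with values in a variation space is delicate, and you would have to do genuine work to avoid degrading the exponents.

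The paper sidesteps this entirely. Rather than bounding $RV^{\al_1,\al_2}$ and $LV^{\al_2,\al_1}$ directly, it first applies a multiparameter Garsia--Rodemich--Rumsey inequality (Theorem~3.1 in Hu--L\^e~\cite{Hu}) to the moment estimate of Lemma~\ref{lem22}. This converts the $L^{2p}$ bound into a \emph{pathwise} H\"older-type control on the rectangular increments: for any $\gamma_1<\tfrac{\beta-1}{2}$ and $\gamma_2<\tfrac{\beta-1}{2\beta}$ there is a random constant $C_p(\omega)$ such that
\[
|\Delta_i\Delta_j \ell^{x_j}(t_i)| \le C_p(\omega)\,|x_j-x_{j-1}|^{\gamma_1}\,|t_i-t_{i-1}|^{\gamma_2}
\]
almost surely, uniformly over $[0,T]\times[-L,L]$. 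Once this is in hand, both $RV^{\al_1,\al_2}$ and $LV^{\al_2,\al_1}$ follow by a one-line computation: take $\gamma_1=\al_1^{-1}$, $\gamma_2=\al_2^{-1}$ and sum, obtaining the explicit bound $(2L)^{1/\al_1}T^{1/\al_2}C_p(\omega)$ for every partition. The point is that a pathwise product-H\"older bound on rectangular increments is strictly stronger than finite $(\al_1,\al_2)$-variation for $\al_1>1/\gamma_1$, $\al_2>1/\gamma_2$, and once you have it the iterated-sum structure of $RV$ versus $LV$ is irrelevant. Your vector-valued/Besov machinery is not needed; the multiparameter GRR already packages the chaining and Borel--Cantelli into one citation.
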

\begin{proof}
Let us fix a compact set $[-L,L]\subset \mathbb{R}$ and let $p\ge 1$ be an arbitrary positive integer. Theorem 3.1 from \cite{Hu} and Lemma \ref{lem22} imply that for every
 $\gm_1$ and $\gm_2$ satisfying
\aaa{
\label{ppp}
\gm_1 < \frac{\beta-1}2   -\frac1{2p} \quad  \text{and}  \quad \gm_2 < \frac{\beta-1}{2\beta} - \frac1{2p}
}
there exists a non-negative random variable $C_p(\om)$, which depends on $p$, such that
 \aaa{
\label{ineq}
|\ell^x(\omega,t) - \ell^y(\omega,t) - (\ell^x(\omega,s)- \ell^y(\omega,s))| \lt C_p(\om) |x-y|^{\gm_1}|t-s|^{\gm_2}
}
for every $s,t\in [0,T]$ and almost all $\omega\in \Omega$. In other words, for each pair of positive constants $\gm_1$ and $\gm_2$ satisfying


\aa{
\gm_1 < \frac{\beta-1}2   \quad  \text{and}  \quad \gm_2 < \frac{\beta-1}{2\beta}
}
there exists $p\ge 1$ which realizes (\ref{ppp}) and a non-negative random variable $C_p(\om)$, depending on $p$, such that \eqref{ineq} holds.

Now let $(\al_1,\alpha_2)$ be any pair of numbers satisfying $\al_1>\frac2{\beta-1}$ and $\al_2>\frac{2\beta}{\beta-1}$. Inequality \eqref{ineq} is fulfilled
for $\gm_1 = \al_1^{-1}$ and $\gm_2 = \al_2^{-1}$ and for a non-negative random variable $C_p(\om)$.  For a given partition,
$\pi =  \{-L=x_0\le x_1 \ldots \le x_m=L\} \times \{0=t_1 \le t_2 \le \ldots \le t_n=T\}$ of $[-L, L] \times [0,T]$, we then have

\aa{
\sum_{j=1}^m |\Dl_j\Dl_i \ell^{x_j}(\omega,t_i)|^{\al_1} \lt 2LC_p(\om)^{\al_1}|t_i - t_{i-1}|^{\gm_2\al_1},
}
and hence,
\aa{
\Big[\sum_{i=1}^n\big[\sum_{j=1}^m |\Dl_j\Dl_i \ell^{x_j}(\omega,t_i)|^{\al_1}\big]^{\frac{\al_2}{\al_1}}\Big]^{\frac1{\al_2}}
\lt (2L)^{\frac{1}{\alpha_1}} T^{\frac{1}{\alpha_2}} C_p(\om)~\text{for almost all}~\omega \in \Omega.
}
This shows that $RV^{\al_1,\al_2}_{[0,T]\times [-L,L]}(\ell) < \infty$ a.s. for any
$\al_1> \frac2{\beta-1}$ and $\al_2> \frac{2\beta}{\beta-1}$. The above argument also shows that $LV^{\al_2,\al_1}_{[0,T]\times [-L,L]}(\ell) < \infty~a.s$.
This allows us to conclude the proof.

\end{proof}
In the sequel, we denote $\Delta f(t,s;x,y):=f(t,x) - f(t,y) - \big( f(s,x) - f(s,y) \big); (t,s,x,y)\in [0,T]^2\times \mathbb{R}^2.$ A routine manipulation yields the following interpolation result. We omit the details of the proof.

\begin{lemma}\label{interpolation1}
Let $f:[0,T]\times [-M,M]\rightarrow \mathbb{R}$ be a function such that $LV^{a,b}_{[0,T]\times [-M,M]}(f) <\infty$ for $a,b\ge 1$. If $a< a'$ and $b' = \frac{a'}{a}b$, then

\aa{
LV^{a',b'}_{[0,T]\times [-M,M]}(f)\le \sup_{t,s\in [0,T];x,y\in [-M,M]}|\Delta f(t,s;x,y)|^{\frac{a'- a}{a'}}
\, \sup_{\pi}\Bigg[\sum_{j=1}^m\Big[\sum_{i=1}^n|\Delta_i\Delta_j f(t_i,x_j)|^a \Big]^{\frac{b}{a}} \Bigg]^{\frac{1}{b'}}.
}
Similarly, if $RV^{p,q}_{[0,T]\times [-M,M]}(f) <\infty$ for $p,q\ge 1$ and $p< p'$ and $q' = \frac{p'}{p}q$, then

\aa{
RV^{p',q'}_{[0,T]\times [-M,M]}(f)\le \sup_{t,s\in [0,T];x,y\in [-M,M]}|\Delta f(t,s;x,y)|^{\frac{p'- p}{p'}} \,
 \sup_{\pi}\Bigg[\sum_{i=1}^n\Big[\sum_{j=1}^m|\Delta_i\Delta_j f(t_i,x_j)|^p \Big]^{\frac{q}{p}} \Bigg]^{\frac{1}{q'}},
}
where sup varies over all partitions $\pi:=\{0=t_1 \le t_2 \le \ldots \le t_n=T\}\times \{-M=x_0\le x_1 \ldots \le x_m=M\}$ of $[0,T]\times [-M,M]$.
\end{lemma}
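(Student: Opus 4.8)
The plan is to prove the interpolation inequality for $LV^{a',b'}$ directly from the definition, and then note the $RV$ version follows by the obvious symmetry (swapping the roles of the time and space partitions). First I would fix an arbitrary partition $\pi = \{0 = t_1 \le \cdots \le t_n = T\} \times \{-M = x_0 \le \cdots \le x_m = M\}$ of $[0,T]\times[-M,M]$ and work with the single inner-outer double sum $\big[\sum_{j=1}^m [\sum_{i=1}^n |\Delta_i\Delta_j f(t_i,x_j)|^{a'}]^{b'/a'}\big]^{1/b'}$ that appears under the supremum defining $LV^{a',b'}_{[0,T]\times[-M,M]}(f)$.

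The key estimate is pointwise: for each pair $(i,j)$, since $a < a'$, write $|\Delta_i\Delta_j f(t_i,x_j)|^{a'} = |\Delta_i\Delta_j f(t_i,x_j)|^{a'-a}\,|\Delta_i\Delta_j f(t_i,x_j)|^{a}$ and bound the first factor by $S^{a'-a}$, where $S := \sup_{t,s\in[0,T];x,y\in[-M,M]} |\Delta f(t,s;x,y)|$; note $\Delta_i\Delta_j f(t_i,x_j) = \Delta f(t_i,t_{i-1};x_j,x_{j-1})$, so this sup genuinely dominates every increment. Summing over $i$ then gives $\sum_{i=1}^n |\Delta_i\Delta_j f(t_i,x_j)|^{a'} \le S^{a'-a}\sum_{i=1}^n |\Delta_i\Delta_j f(t_i,x_j)|^{a}$. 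Now raise to the power $b'/a'$ and sum over $j$; because $b' = (a'/a)b$, the exponent $b'/a'$ equals $b/a$ exactly, so $\big[\sum_i |\Delta_i\Delta_j f|^{a'}\big]^{b'/a'} \le S^{(a'-a)b'/a'}\big[\sum_i |\Delta_i\Delta_j f|^{a}\big]^{b/a}$. Summing over $j$ and taking the $b'$-th root yields, for this fixed $\pi$,
\[
\Bigg[\sum_{j=1}^m\Big[\sum_{i=1}^n|\Delta_i\Delta_j f(t_i,x_j)|^{a'}\Big]^{b'/a'}\Bigg]^{1/b'} \le S^{(a'-a)/a'}\Bigg[\sum_{j=1}^m\Big[\sum_{i=1}^n|\Delta_i\Delta_j f(t_i,x_j)|^{a}\Big]^{b/a}\Bigg]^{1/b'},
\]
using $\frac{1}{b'}\cdot\frac{(a'-a)b'}{a'} = \frac{a'-a}{a'}$. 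Taking the supremum over all partitions $\pi$ on both sides gives the claimed bound on $LV^{a',b'}_{[0,T]\times[-M,M]}(f)$, the right-hand sup matching the stated expression since it involves the $b$-aggregation raised to $1/b'$.

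There is essentially no obstacle here beyond bookkeeping of exponents; the only mild care needed is verifying that the arithmetic $b'/a' = b/a$ and $\frac{1}{b'}\cdot\frac{(a'-a)b'}{a'} = \frac{a'-a}{a'}$ is exactly what the hypothesis $b' = \frac{a'}{a}b$ delivers, and that finiteness of the right side (guaranteed by $LV^{a,b}_{[0,T]\times[-M,M]}(f) < \infty$ together with the trivial bound $S \le 2\sup|f| < \infty$ when $f$ has finite $(a,b)$-variation) ensures the inequality is meaningful. For the $RV^{p',q'}$ statement one repeats the argument verbatim after interchanging the order of the two summations, i.e. reading off the estimate with the $i$-sum outermost and the $j$-sum innermost, which is precisely the definition of $RV$ in Definition \ref{jointdef}. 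This is why the proof details are routine and may be omitted.
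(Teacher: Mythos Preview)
Your argument is correct and is exactly the routine manipulation the paper has in mind; the paper itself omits the proof, stating only that ``a routine manipulation yields the following interpolation result.'' One small remark: the finiteness of $S$ follows directly from $S \le LV^{a,b}_{[0,T]\times[-M,M]}(f)$ (take a partition containing the four points), not from a bound by $2\sup|f|$, but this is immaterial since the inequality holds regardless.
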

In the sequel, for a compact set $[0,T]\times [-M,M]$, we denote

$$\|f\|_{a,b; [0,T]\times [-M,M]}:= LV^{a,b}_{[0,T]\times [-M,M]}(f) + \|f(0,\cdot))\|_{[-M,M];b} +\|f(\cdot,-M)\|_{[0,T];a} + |f(0,-M)|$$
where $a,b\ge 1$. We define $LW_{a,b}([0,T]\times [-M,M])$ as the set of all functions $f:[0,T]\times [-M,M]\rightarrow \mathbb{R}$ such that $\|f\|_{a,b; [0,T]\times [-M,M]}< \infty$.

For $p,q\ge 1$, we also denote

$$|f|_{p,q; [0,T]\times [-M,M]}:= RV^{p,q}_{[0,T]\times [-M,M]}(f) + \|f(0,\cdot))\|_{[-M,M];q} +\|f(\cdot,-M)\|_{[0,T];p} + |f(0,-M)|$$
and $RW_{p,q}([0,T]\times [-M,M])$ is the set of all functions $f:[0,T]\times [-M,M]\rightarrow \mathbb{R}$ such that $|f|_{p,q; [0,T]\times [-M,M]}< \infty$. We refer the reader to Towgui~\cite{towghi} for details on this joint variation concept.

\

\noindent \textbf{Assumption D(i)} There exists $1\le a< \frac{2\beta}{\beta+1}$, such that $\sup_{x\in K}\|\nabla^w F_\cdot(^{x}c_{\cdot})\|_{a;[0,T]}< \infty$ for every $c\in C([0,T];\mathbb{R})$ and a compact subset $K\subset \mathbb{R}$.

\

\noindent \textbf{Assumption D(ii)} There exists $1\le b< \frac{2}{3-\beta}$, such that $\sup_{0\le t\le T}\|\nabla^w F_t(^{\cdot}c_t)\|_{b;[-M,M]}< \infty$ for every $c\in C([0,T];\mathbb{R})$ and $M>0$.

\

\begin{proposition}\label{corPQ}
Let $X$ be a stable symmetric process with index $1< \beta \le 2$. Assume that $F$ is a functional which satisfies Assumptions A1, A2, C and $D(i)$. If for each $c\in C([0,T];\mathbb{R})$, $(t,x)\mapsto(\nabla^w_x F_{t})(^{x}c_t) \in LW_{a,b}([0,T]\times [-M,M])$ for every $M>0$ with $1 \le a < \frac{2\beta}{\beta+1}$, $1\le b < \frac{2}{3-\beta}$ and $1 \le a\le b$, then

\begin{equation}\label{itotowgui}
F_t(X_t) = F_0(X_0)+ \int_0^t\nabla^hF_s(X_s)ds + \int_0^t\nabla^wF_s(X_s)dX(s) - \frac{1}{2}\int_{-\infty}^{+\infty}\int_0^t(\nabla^w_xF_s)(^{x}X_s)d_{(s,x)}\ell^x(s)~a.s
\end{equation}
for $0\le t\le T$.
\end{proposition}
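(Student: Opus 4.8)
The plan is to mimic the proof of Theorem~\ref{thYoung}, replacing the $(p,q)$-bivariation control of Assumption~B by the joint-variation hypotheses and the $2$D Young integration results used there by their joint-variation counterparts (Towghi~\cite{towghi}). Fix $M>0$ with $\operatorname{supp}\rho\subset[-M,M]$, put $t_M:=t\wedge T_M$, mollify $F$ into $F^n$ via \eqref{mol}, and apply Corollary~\ref{Itoformulamoll}; it then suffices to pass to the limit $n\to\infty$ in the four resulting terms. The convergences $F^n_t(X^M_t)\to F_t(X^M_t)$, $\int_0^t\nabla^hF^n_s(X^M_s)\,ds\to\int_0^t\nabla^hF_s(X^M_s)\,ds$ and $\int_0^{t_M}\nabla^vF^n_s(X_s)\,dX(s)\to\int_0^{t_M}\nabla^wF_s(X_s)\,dX(s)$ are obtained exactly as in the proofs of Theorems~\ref{Ito4} and~\ref{thYoung} using Assumptions~A1(ii), A2 and~C; in particular the uniform convergence $\sup_{(s,x)\in[0,T]\times[-M,M]}\bigl|\nabla_xF^n_s(^{x}X_s)-(\nabla^w_xF_s)(^{x}X_s)\bigr|\to0$ (cf.\ \eqref{step3}) goes through unchanged. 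The only genuinely new point is the a.s.\ convergence of the space-time local-time integral $\int_0^{t_M}\int_{[-M,M]}\nabla_xF^n_s(^{x}X_s)\,d_{(s,x)}\ell^x(s)\to\int_0^{t_M}\int_{[-M,M]}(\nabla^w_xF_s)(^{x}X_s)\,d_{(s,x)}\ell^x(s)$, now under joint-variation conditions on the integrand rather than the bivariation control of Assumption~B.

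The exponent bookkeeping is the core of the argument. By Lemma~\ref{ltimejointvar}, almost surely $\ell$ restricted to $[0,T]\times[-2M,2M]$ has finite $RV^{\alpha_1,\alpha_2}$ and $LV^{\alpha_2,\alpha_1}$ for all $\alpha_1>\tfrac{2}{\beta-1}$ and $\alpha_2>\tfrac{2\beta}{\beta-1}$. From $1\le a<\tfrac{2\beta}{\beta+1}$ we get $1-\tfrac1a<\tfrac{\beta-1}{2\beta}$, and from $1\le b<\tfrac{2}{3-\beta}$ we get $1-\tfrac1b<\tfrac{\beta-1}{2}$; hence one may choose $\alpha_2$ slightly above $\tfrac{2\beta}{\beta-1}$ with $\tfrac1{\alpha_2}\in\bigl(1-\tfrac1a,\tfrac{\beta-1}{2\beta}\bigr)$ and $\alpha_1$ slightly above $\tfrac{2}{\beta-1}$ with $\tfrac1{\alpha_1}\in\bigl(1-\tfrac1b,\tfrac{\beta-1}{2}\bigr)$, so that $\tfrac1a+\tfrac1{\alpha_2}>1$ and $\tfrac1b+\tfrac1{\alpha_1}>1$. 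Since $a$ (resp.\ $b$) is the time- (resp.\ space-) direction exponent of $LV^{a,b}$ while $\alpha_2$ (resp.\ $\alpha_1$) is the time- (resp.\ space-) direction exponent of $RV^{\alpha_1,\alpha_2}$, these are precisely the complementary-index conditions under which Towghi's space-time $2$D Young integral $\int_0^{t_M}\int_{[-M,M]}g\,d_{(s,x)}\ell^x(s)$ is well defined for every $g\in LW_{a,b}([0,T]\times[-M,M])$, together with a quantitative estimate bounding it by a constant times $\|g\|_{a,b;[0,T]\times[-M,M]}$; having also $LV^{\alpha_2,\alpha_1}(\ell)<\infty$ lets one carry out, exactly as in \eqref{step4}--\eqref{step10}, the $2$D integration-by-parts that turns $\int\int g\,d\ell$ into $\int\int\ell\,dg$ plus a $1$D boundary integral in $x$ at time $t_M$ (the $x=-M$ and $t=0$ boundary terms vanishing since $\ell^{-M}(\cdot)=0$ and $\ell^{\cdot}(0)=0$ a.s.).

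Next one verifies uniform control of the mollifiers $g^n_s(x):=\nabla_xF^n_s(^{x}X_s)$. By Assumption~C and the change of variables in \eqref{step2}, $g^n\to g$, where $g_s(x):=(\nabla^w_xF_s)(^{x}X_s)$, uniformly on $[0,T]\times[-M,M]$; by Jensen's inequality applied to the convolution, together with the fact that the shifted partitions sit inside $[-2M,2M]$ (as in \eqref{step6}--\eqref{step9}), one gets $\|g^n\|_{a,b;[0,T]\times[-M,M]}\le\|g\|_{a,b;[0,T]\times[-2M,2M]}$, which is a.s.\ finite by the $LW_{a,b}$-hypothesis and Assumption~D(i) (the latter supplying the required $[0,T]$-$a$-variation boundary term; the uniform-in-$t$ $b$-variation in $x$ follows from $LV^{a,b}(g)<\infty$ and the $t=0$ boundary term). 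The interpolation Lemma~\ref{interpolation1} then upgrades this to a uniform bound on $\|g^n\|_{a',b';[0,T]\times[-M,M]}$ for $a<a'$ close to $a$ and $b'=\tfrac{a'}{a}b$ — with $a'\le b'$ because $a\le b$ — which remains complementary to $(\alpha_1,\alpha_2)$ when $a'-a$ is small; interpolating between the uniform sup-bound and the uniform $LV^{a,b}$-bound also gives $\|g^n-g\|_{a',b';[0,T]\times[-M,M]}\to0$. Feeding this and the finite joint variation of $\ell$ into the Towghi--Young estimate, and into the joint-variation analogues of Young's Theorem~6.4 (for $\int\int\ell\,dg^n$) and of Proposition~6.12 in~\cite{friz} (for the $1$D boundary piece), yields the desired a.s.\ convergence of the local-time integral. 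Combining with Corollary~\ref{Itoformulamoll} and the three previous convergences gives \eqref{itotowgui} at time $t\wedge T_M$, and letting $M\to\infty$ (using the a.s.\ compact support of $(x,t)\mapsto\ell^x(t)$) gives \eqref{itotowgui}.

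The main obstacle is making the last step rigorous: obtaining the quantitative Towghi--Young estimate and the continuity of the space-time $2$D Young integral in the joint-variation framework under the mode of convergence at hand (pointwise plus uniformly bounded, slightly supercritical joint variation), i.e.\ the joint-variation analogues of Young's Theorem~6.4 and of Friz's Proposition~6.12 for the pieces $\int\int\ell\,dg^n$ and the $1$D boundary integral. The strict margins $a<\tfrac{2\beta}{\beta+1}$ and $b<\tfrac{2}{3-\beta}$ are exactly what leave room to push both exponents of $\ell$ below their critical values $\tfrac{2}{\beta-1}$ and $\tfrac{2\beta}{\beta-1}$ while keeping the integral convergent, and the ordering $a\le b$ is what keeps the interpolated pair $(a',b')$ admissible; the interpolation Lemma~\ref{interpolation1} is the device turning uniform convergence into convergence in the slightly weaker joint-variation norm. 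Everything else — the mollifier bounds, the $2$D integration-by-parts, and the $M\to\infty$ passage — is a routine adaptation of the corresponding steps in the proof of Theorem~\ref{thYoung}.
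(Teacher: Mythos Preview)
Your proposal is essentially correct and shares the paper's core ingredients (mollification via Corollary~\ref{Itoformulamoll}, the exponent bookkeeping with Lemma~\ref{ltimejointvar}, and the interpolation Lemma~\ref{interpolation1} to upgrade uniform convergence of $\Phi^n:=\nabla_xF^n-\nabla^wF$ plus a uniform $LV^{a,b}$-bound into $\|\Phi^n\|_{a',b'}\to0$). However, you take an unnecessary detour and misidentify the ``main obstacle.''

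The paper does \emph{not} perform any $2$D integration-by-parts here and does not need joint-variation analogues of Young's Theorem~6.4 or of Proposition~6.12 in~\cite{friz}. Once $\|\Phi^n\|_{a',b'}\to0$ a.s.\ is established (exactly as you outline, via Jensen on the convolution, the shifted-partition trick, Assumption~D(i) for the time-boundary term, and Lemma~\ref{interpolation1}), the paper simply invokes Towghi's Theorem~1.2 directly on the \emph{difference} $\Phi^n$:
\[
\Bigl|\int_0^{t\wedge T_M}\!\!\int_{-M}^M \Phi^n_s(x)\,d_{(s,x)}\ell^x(s)\Bigr|\le C\,\|\Phi^n\|_{a',b'}\cdot LV^{\alpha_2,\alpha_1}(\ell),
\]
which tends to $0$ a.s.\ by Lemma~\ref{ltimejointvar}. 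By linearity this is already the desired convergence of the local-time integral; no splitting into $\int\!\!\int\ell\,dg^n$ plus a $1$D boundary piece is required. In other words, the quantitative Towghi estimate you cite \emph{is} the continuity statement you are looking for, and the steps \eqref{step4}--\eqref{step10} from Theorem~\ref{thYoung} (which were forced there because only bivariation, not joint variation, of $\ell$ was available) are bypassed entirely. Dropping that detour, your argument coincides with the paper's.
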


\begin{proof}
In the sequel, we fix $M>0$ and to shorten notation, we omit $[0,T]\times [-M,M]$ and we write $\|\cdot\|_{a,b}$ and $LW_{a,b}$. We also write $\|\cdot\|_\gamma$ for the one-parameter H\"{o}lder norm over a compact set. Throughout this section, $C$ is a generic constant which may differ from line to line. From Boylan \cite{boylan}, we know that $\{\ell^x(s); (s,x)\in \mathbb{R}_+\times\mathbb{R}\}$ has jointly continuous paths a.s. From Lemma \ref{ltimejointvar} and Towgui~[Th 1.2 (b); \cite{towghi}], we know that the following integral process

\begin{equation}\label{intprocess}
\int_0^t\int_{-M}^M (\nabla^wF_s)(^{x}X_s)d_{(s,x)}\ell^x(s); 0\le t\le  T,
\end{equation}
exists if for any $c\in C([0,T];\mathbb{R})$, $(t,x)\mapsto(\nabla^w_x F_{t})(^{x}c_t) \in LW_{a,b}$ where

\begin{equation}\label{exp}
1 \le a < \frac{\alpha_2}{\alpha_2-1}, 1\le b< \frac{\alpha_1}{\alpha_1-1}\quad\text{and}\quad \alpha_1 > \frac{2}{\beta-1}, \alpha_2 > \frac{2\beta}{\beta-1}.
\end{equation}
Since $\frac{2}{3-\beta} = \sup\{\frac{\alpha_1}{\alpha_1-1}; \alpha_1 > \frac{2}{\beta-1}\}$ and $\frac{2\beta}{\beta+1} = \sup\{\frac{\alpha_2}{\alpha_2-1}; \alpha_2 > \frac{2\beta}{\beta - 1}\}$, then (\ref{intprocess}) exists whenever $\nabla^w F(c)\in LW_{a,b}$ for any $a < \frac{2\beta}{\beta+1}$ and $b < \frac{2}{3-\beta}$.

From Assumptions A1-A2 and Corollary \ref{Itoformulamoll}, the following decomposition holds

$$F^n_t(X^M_t) = F^n_0(X^M_0) + \int_0^t\nabla^h F^n_s(X^M_s)ds + \int_0^{t\wedge T_M}\nabla^v F^n_s(X_s)dX(s)
 -\int_0^{t\wedge T_M}\int_{-M}^M\nabla_x F^n(^{x}X_s)d_{(s,x)}\ell^x(s)$$
a.s.for $0\le t\le T, n\ge 1$. From Assumptions A1, A2 and C, we have already proved (See convergence in (\ref{step11}) and (\ref{step12})) that $\lim_{n\rightarrow \infty}F^n_t(X^M_t)= F_t(X^M_t)~a.s$ and

\begin{equation}\label{penparte}
\lim_{n\rightarrow \infty} \Big(\int_0^t\nabla^h F^n_s(X^M_s)ds + \int_0^{t\wedge T_M}\nabla^v F^n_s(X_s)dX(s)\Big) = \int_0^t\nabla^h F_s(X^M_s)ds + \int_0^{t\wedge T_M}\nabla^w F_s(X_s)dX(s)
\end{equation}
in probability for each $t\in [0,T]$. It only remains to check

\begin{equation}\label{ultparte}
\int_0^{t\wedge T_M} \int_{-M}^M  \nabla_x F^n_s(^{x}X_s)d_{(s,x)}\ell^x(s)\rightarrow \int_0^{t\wedge T_M} \int_{-M}^M  (\nabla^w_x F_s)(^{x}X_s)d_{(s,x)}\ell^x(s)
\end{equation}
a.s. as $n\rightarrow \infty$ for every $t\in [0,T]$. To shorten notation, let us denote $\Phi^n_s(x):=\nabla_x F^n_s(^{x} X_s) - (\nabla^w_x F_s)(^{x}X_s); (s,x)\in [0,T]\times[-M,M]$. Let us fix an arbitrary $t\in [0,T]$. In the sequel, we take $\varepsilon> 0$ small enough such that $a'  = a+\varepsilon $ and $b' = \frac{a'}{a}b$ satisfy $a'< \frac{2\beta}{\beta+1}$ and $b'< \frac{2}{3-\beta}$. We claim that

\begin{equation}\label{phiestimate1}
\|\Phi^n\|_{a',b'}\rightarrow 0~a.s~\text{as}~n\rightarrow \infty.
\end{equation}
A simple one parameter interpolation estimate (similar to Lemma \ref{interpolation1}) yields

\begin{equation}\label{phiestimate2}
\|\Phi^n_0\|_{b'}\le \sup_{x,y\in [-M,M]^2}|\Phi^n_0(x) - \Phi^n_0(y)|^{1-\frac{b}{b'}}\| \Phi^n_0\|_{b}^{\frac{b}{b'}}~a.s
\end{equation}
and

\begin{equation}\label{phiestimate3}
\|\Phi^n_\cdot(-M)\|_{a'}\le \sup_{s,t\in [0,T]^2}|\Phi^n_t(-M) - \Phi^n_s(-M)|^{1-\frac{a}{a'}}\| \Phi^n_0\|_{a}^{\frac{a}{a'}}~a.s
\end{equation}
where (\ref{step3}) yields $\sup_{x,y\in[M,M]^2}|\Phi^n_0(x) - \Phi^n_0(y)|^{1-\frac{b}{b'}}\rightarrow 0~a.s$ as $n\rightarrow \infty$. Moreover,

\aa{
\sum_{j=1}^m|\Delta_j \nabla_x F^n_0 (^{x_j}X_0)|^b \le \int_0^2 \rho(z) \sum_{j=1}^m|\nabla_j\nabla^w F_0 \big(^{x_j - \frac{z}{n}}X_0\big)|^bdz
\le C\|\nabla^w F_0 (^{\cdot}X_0)\|_b^b
}
so that $\sup_{n\ge 1}\|\nabla_x F^n_0 (^{\cdot}X_0)\|^b_b \le C\|\nabla^w F_0 (^{\cdot}X_0)\|^b_b~a.s$. Triangle inequality then allows us to conclude that $\sup_{n\ge 1}\|\Phi^n_0\|^{\frac{b}{b'}}_b\le C \|\nabla^w F_0 (^{\cdot}X_0)\|_b^{\frac{b}{b'}}~a.s$. Then (\ref{phiestimate2}) yields

\begin{equation}\label{phiestimate4}
\lim_{n\rightarrow + \infty}\|\Phi_0\|_{b'} = 0  \quad \text{a.s.}
\end{equation}
Similarly, by D(i),
\aa{
\sum_{i=1}^k|\Delta_i \nabla_x F^n_{t_i} (^{-M}X_{t_i})|^a&\le& \int_0^2 \rho(z) \sum_{i=1}^k|\Delta_i\nabla^w F_{t_i} \big(^{-M - \frac{z}{n}}X_{t_i}\big)|^adz
\le C\sup_{-2M\le x\le 0}\|\nabla^w F_\cdot (^{x}X_\cdot)\|_a^a
}
so that $\sup_{n\ge 1}\|F^n_\cdot(^{-M}X_\cdot)\|_a^a\le\sup_{-2M\le x\le 0}\|\nabla^w F_\cdot (^{x}X_\cdot)\|_a^a~a.s$. Triangle inequality, (\ref{phiestimate3}) and (\ref{step3}) yield

\begin{equation}\label{phiestimate5}
\lim_{n\rightarrow +\infty}\|\Phi^n_{\cdot}(-M)\|_{a'} = 0 \quad \text{a.s.}
\end{equation}
Summing up (\ref{phiestimate4}) and (\ref{phiestimate5}) and invoking again (\ref{step3}), we conclude that

\begin{equation}\label{phiestimate6}
\lim_{n\rightarrow +\infty}\Big(|\Phi^n_0(-M)| + \|\Phi^n_0\|_{b'} + \|\Phi^n(-M)\|_{a'}\Big)=0  \quad \text{a.s.}
\end{equation}

Now, we take $\frac{b}{a}\ge 1$ and Jensen inequality yields

\begin{equation}\label{phiestimate7}
\sum_{j=1}^m\Big(\sum_{i=1}^k |\Delta_i\Delta_j \nabla_x F^n_{t_i}(^{x_j}X_{t_i})|^a\Big)^{\frac{b}{a}}\le \int_0^2\sum_{j=1}^m\Big(\sum_{i=1}^k|\Delta_i\Delta_j(\nabla^w F_{t_i})(^{x_j - \frac{z}{n}}X_{t_i})|^a \Big)^{\frac{b}{a}}\rho(z)dz
\end{equation}
a.s. for every $n\ge 1$ and partition $\pi$ of $[0,T]\times [-M,M]$. Lemma \ref{interpolation1} yields

\aa{
LV^{a',b'}(\Phi^n)&\le \sup_{\substack{t,s\in [0,T] \\ x,y\in [-M,M]}}|\Delta \Phi^n(t,s;x,y)|^{\frac{a'- a}{a'}}
\times \sup_{\pi}\Bigg\{\Bigg[\sum_{j=1}^m\Big[\sum_{i=1}^k|\Delta_i\Delta_j \Phi^n_{t_i}(x_j)|^a \Big]^{\frac{b}{a}} \Bigg]^{\frac{1}{b'}} \Bigg\}\\
&\le C \hspace{-2mm}\sup_{\substack{t,s\in [0,T] \\ x,y\in [-M,M]}}|\Delta \Phi^n(t,s;x,y)|^{\frac{a'- a}{a'}}
\times\sup_{\pi}\Bigg\{\Bigg[\sum_{j=1}^m\Big[\sum_{i=1}^k|\Delta_i\Delta_j F^n_{t_i}(^{x_j}X_{t_i})|^a \Big]^{\frac{b}{a}} \Bigg]^{\frac{1}{b'}} \Bigg\}\\
&+  C \hspace{-2mm} \sup_{\substack{t,s\in [0,T] \\ x,y\in [-M,M]}}|\Delta \Phi^n(t,s;x,y)|^{\frac{a'- a}{a'}}
\times\sup_{\pi}\Bigg\{\Bigg[\sum_{j=1}^m\Big[\sum_{i=1}^k|\Delta_i\Delta_j (\nabla^wF_{t_i})(^{x_j}X_{t_i})|^a \Big]^{\frac{b}{a}} \Bigg]^{\frac{1}{b'}} \Bigg\}
}
a.s. for every $n\ge 1$. Then (\ref{step3}), (\ref{phiestimate6}) and (\ref{phiestimate7}) allow us to state that (\ref{phiestimate1}) holds true. Lastly, we take $(\alpha_1,\alpha_2)$ such that $a'< \frac{\alpha_2}{\alpha_2 - 1}, b'< \frac{\alpha_1}{\alpha_1 - 1}$ for $\alpha_1> \frac{2}{\beta-1}$ and $\alpha_2> \frac{2\beta}{\beta-1}$. By Th. 1.2 in Towghi \cite{towghi}, we know there exists a constant $C$ such that

\begin{equation}\label{towghiestimate1}
\Big|\int_0^{t\wedge T_M}\int_{-M}^M \Phi^n_s(x) d_{(s,x)}\ell^x(s)\Big|\le C \|\Phi^n\|_{(a',b')}\times LV^{\alpha_2,\alpha_1}(\ell)
\end{equation}
a.s. for every $n\ge 1$ and hence Lemma \ref{ltimejointvar}, (\ref{towghiestimate1}) and (\ref{phiestimate1}) allow us to conclude that decomposition (\ref{itotowgui}) holds over the stochastic set $[0,t\wedge T_M]$. By taking $M\rightarrow \infty$, we may conclude the proof.
\end{proof}
A complete similar proof also yields the symmetric result of Corollary \ref{corPQ} as follows.

\begin{corollary}\label{corPQA}
Let $X$ be a stable symmetric process with index $1< \beta \le 2$. Assume that $F$ is a functional which satisfies Assumptions A1, A2, C and $D(ii)$. If for each $c\in C([0,T];\mathbb{R})$, $(t,x)\mapsto(\nabla^w_x F_{t})(^{x}c_t) \in RW_{p,q}([0,T]\times [-M,M])$ for every $M>0$ with $1 \le p < \frac{2}{3-\beta}$, $1\le q < \frac{2\beta}{\beta+1}$ and $1 \le p\le q$, then

\aaa{
\label{f51}
F_t(X_t) = F_0(X_0)+ \int_0^t\nabla^hF_s(X_s)ds + \int_0^t\nabla^wF_s(X_s)dX(s) - \frac{1}{2}\int_{-\infty}^{+\infty}\int_0^t(\nabla^w_xF_s)(^{x}X_s)d_{(s,x)}\ell^x(s)~a.s
}
for $0\le t\le T$.
\end{corollary}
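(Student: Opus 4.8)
The plan is to mirror the proof of Proposition \ref{corPQ} almost verbatim, interchanging throughout the roles of the left joint variation $LV^{a,b}$ and the right joint variation $RV^{p,q}$, and correspondingly interchanging the roles of the exponents so that the time-exponent remains the ``outer'' one in the one case and the space-exponent the ``outer'' one in the other. First I would fix $M>0$ and invoke Corollary \ref{Itoformulamoll} to write, for each $n\ge 1$,
\aa{
F^n_t(X^M_t) = F^n_0(X^M_0) + \int_0^t\nabla^h F^n_s(X^M_s)ds + \int_0^{t\wedge T_M}\nabla^v F^n_s(X_s)dX(s) -\int_0^{t\wedge T_M}\int_{-M}^M\nabla_x F^n(^{x}X_s)d_{(s,x)}\ell^x(s)
}
a.s. The convergence of $F^n_t(X^M_t)\to F_t(X^M_t)$ and of the horizontal and stochastic integral terms has already been established in the proof of Theorem \ref{thYoung} (convergences \eqref{step11} and \eqref{step12}) under Assumptions A1, A2 and C, which are among the present hypotheses. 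So the only thing to prove is the convergence of the double Young integral term, this time using $RW_{p,q}$-type controls on the integrand and the $RV^{\al_1,\al_2}$-finiteness of the local time from Lemma \ref{ltimejointvar}.

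Next I would set $\Phi^n_s(x):=\nabla_x F^n_s(^{x}X_s) - (\nabla^w_x F_s)(^{x}X_s)$ and show $|\Phi^n|_{p',q'}\to 0$ a.s., where $p'=p+\eps$ and $q'=\frac{p'}{p}q$ are chosen with $p'<\frac{2}{3-\beta}$ and $q'<\frac{2\beta}{\beta+1}$. As in Proposition \ref{corPQ}, one bounds the boundary terms $|\Phi^n_0(-M)|$, $\|\Phi^n_0(\cdot)\|_{[-M,M];q'}$ and $\|\Phi^n_\cdot(-M)\|_{[0,T];p'}$ by one-parameter interpolation (the analogue of Lemma \ref{interpolation1} in one variable) using the uniform-convergence estimate \eqref{step3} together with the uniform bounds $\sup_n\|\nabla_x F^n_0(^{\cdot}X_0)\|_{[-M,M];q}\le C\|\nabla^w F_0(^{\cdot}X_0)\|_{[-2M,2M];q}$ and the $D(ii)$-type time bound, both obtained from Jensen's inequality applied to the mollification identity \eqref{step2}. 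For the joint-variation part, Jensen's inequality applied to \eqref{step2} gives, for every partition $\pi$,
\aa{
\sum_{i=1}^k\Big(\sum_{j=1}^m|\Delta_i\Delta_j \nabla_x F^n_{t_i}(^{x_j}X_{t_i})|^p\Big)^{\frac{q}{p}}\le \int_0^2\sum_{i=1}^k\Big(\sum_{j=1}^m|\Delta_i\Delta_j(\nabla^w_x F_{t_i})(^{x_j - \frac{z}{n}}X_{t_i})|^p \Big)^{\frac{q}{p}}\rho(z)dz,
}
which is uniformly bounded in $n$ by $RV^{p,q}$-finiteness on the slightly larger compact $[-2M,2M]$; then the $RV$-version of the interpolation Lemma \ref{interpolation1} together with \eqref{step3} gives $RV^{p',q'}(\Phi^n)\to 0$ a.s. Finally, by Towghi's estimate (Th. 1.2 in \cite{towghi}) one has
\aa{
\Big|\int_0^{t\wedge T_M}\int_{-M}^M \Phi^n_s(x) d_{(s,x)}\ell^x(s)\Big|\le C\, |\Phi^n|_{p',q'}\times RV^{\al_2,\al_1}(\ell)
}
for suitable $\al_1>\frac{2}{\beta-1}$, $\al_2>\frac{2\beta}{\beta-1}$ with $p'<\frac{\al_1}{\al_1-1}$, $q'<\frac{\al_2}{\al_2-1}$; combined with $RV^{\al_2,\al_1}(\ell)<\infty$ a.s.\ from Lemma \ref{ltimejointvar} and $|\Phi^n|_{p',q'}\to0$, this yields \eqref{ultparte}, hence \eqref{f51} on $[0,t\wedge T_M]$, and letting $M\to\infty$ (using the a.s.\ compact support of $x\mapsto\ell^x(t)$) finishes the proof.

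The main obstacle — the only place requiring real care rather than transcription — is bookkeeping the pairing of exponents: in Towghi's duality one integrand must have finite $RV^{p',q'}$ and the integrator finite $RV^{q'^{*},p'^{*}}$ (or the analogous $LV$), so I must make sure that the condition $\frac{2}{3-\beta}=\sup\{\frac{\al_1}{\al_1-1};\al_1>\frac{2}{\beta-1}\}$ and $\frac{2\beta}{\beta+1}=\sup\{\frac{\al_2}{\al_2-1};\al_2>\frac{2\beta}{\beta-1}\}$ lines up with the constraints $1\le p<\frac{2}{3-\beta}$, $1\le q<\frac{2\beta}{\beta+1}$, $p\le q$, exactly as in the proof of Proposition \ref{corPQ} but with $p,q$ (and $\al_1,\al_2$) in the swapped positions. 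Once that dictionary is fixed, every step is a routine adaptation of the argument already given.
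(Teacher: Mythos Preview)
Your proposal is correct and is precisely the approach the paper intends: the paper gives no separate proof for Corollary \ref{corPQA}, stating only that ``a complete similar proof'' to that of Proposition \ref{corPQ} yields the symmetric result, and your outline carries out that symmetrization in the natural way. One small slip: in the final Towghi estimate you should invoke $RV^{\al_1,\al_2}(\ell)<\infty$ (inner space exponent $\al_1$, outer time exponent $\al_2$) as provided by Lemma \ref{ltimejointvar}, not $RV^{\al_2,\al_1}(\ell)$; your exponent conditions $p'<\frac{\al_1}{\al_1-1}$, $q'<\frac{\al_2}{\al_2-1}$ are the right ones and pair with that object.
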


\begin{example}[Path-dependent cylindrical functionals]
\rm
Let $\{0=t_0 < t_1 < t_2 <\cdots < t_n = T\}$ be a partition of $[0,T]$. Consider a continuous function $f: \Rnu^{n}\to \Rnu$
weakly differentiable in each variable. Let us assume that for each $k$ and for each $i>k$,  the $i$th  weak partial derivative
\aaa{
\label{p-der1}
x\mto \nab^w_i f(c(t_1-), c(t_2-),\ldots, c(t_{k}-), x, \ldots, x_i, \ldots, x)\big|_{x_i = x},
}
evaluated at $x$,
is left continuous and is of bounded $q$-variation on $[-M,M]$ for each $M>0$ and for
some $q \in [1, \frac{2}{3-\beta})$.
For every $c\in\Lambda$, define the functional $F_t$ by the formulas:
\aaa{
\label{Ft}
F(c) = f(c(t_1-), c(t_2-), \ldots, c(t_n-)) \quad  \text{and} \quad F_t(c_t) = F(c_{t,T-t}).
}
Let us prove that It\^o's formula \eqref{f51} holds the functional $F_t$.
Let us notice that the functional $\mathcal F^x$, defined by \eqref{new-f}, takes the form:
\aaa{
\label{G-expli}
\mathcal F^x_t(c_t) = \sum_{k=0}^{n-1} f(c(t_1-), c(t_2-),\ldots, c(t_k-), x,\ldots, x)\, \ind_{\{t_k \lt  t < t_{k+1}\}}.
}
From this formula one immediately verifies that the family $\mathcal F^x_t(c_t)$
is state boundedness preserving and that $\nab^h \mathcal F^x_t(c_t) = 0$. For the weak
derivative we obtain:
\mmm{
\label{p-der}
\nab^w_x \mathcal F^x_t(c(t_1-), c(t_2-),\ldots, c(t_{k}-), x,\ldots, x) \\
 = \sum_{k=0}^{n-1} \ind_{\{t_{k} \lt  t < t_{k+1}\}} \sum_{i=k+1}^{n}
 \nab_i^w f(c(t_1-), c(t_2-),\ldots, c(t_{k}-), x,\ldots, x_i, \ldots, x)\big|_{x_i = x}.
}
This immediately
implies that Assumptions C and D(ii) are fulfilled. We also remark that
$(t,x)\mapsto(\nabla^w_x \mathcal F_t^x)(c_t) \in LW_{p,q}([0,T]\times [-M,M])$, where
$q$ is the same number as of the $q$-variation of \eqref{p-der1}, and $p$ is arbitrary.

We further note that the family $\mathcal F^x$ fails to be state-dependent
$\Lambda$-continuous. However, one immediately verifies that it is
state-dependent $\Lambda$-continuous on each interval $[t_{i-1}, t_i-\eps]$
for any sufficiently small $\eps$.
Therefore, on the interval $[0,t_1 -\eps]$ all assumptions of Proposition \ref{corPQ} are fulfilled, and therefore,
\mm{
F_t(X_t) = F_0(X_0)+ \int_0^{t_1-\eps}\nabla^hF_s(X_s)ds + \int_0^{t_1-\eps}\nabla^wF_s(X_s)dX(s) \\ - \frac{1}{2}\int_{-\infty}^{+\infty}\int_0^{t_1-\eps}(\nabla^w_xF_s)(^{x}X_s)d_{(s,x)}\ell^x(s).
}
Passing to the limit as $\eps\to 0$, we obtain  \eqref{itotowgui} for any $t\in [0,t_1]$.
By the same argument, \eqref{itotowgui} holds on each interval $[t_{i-1},t_i]$ with the initial condition $F_{t_{i-1}}(X_{t_{i-1}})$.
This implies \eqref{itotowgui} for every $t\in [0,T]$.
\end{example}

\begin{example}\label{FPS}
\rm
Let us now summarize Theorem \ref{thYoung}, Proposition \ref{corPQ} and Corollary \ref{corPQA}. One typical class of examples which can be treated by using the results of Sections \ref{funcsec1} and \ref{jointsection} is the following pathwise path-dependent version of the classical F\"ollmer-Protter-Shiryaev formula (\cite{follmer})

$$F_t(X_t) = \int_{-\infty}^{X(t)}Z_t(X_t; y)dy$$
where $Z = \{Z_t(\cdot; x): C([0,t];\mathbb{R})\rightarrow\mathbb{R}; 0\le t\le T, x\in\mathbb{R}\}$ can be chosen in such way that

$$\nabla^wF_t(^{x}X_t) = Z_t(X_t; x)\quad \text{and}\quad \nabla^h F_t(X_t) = \int_{-\infty}^{X(t)}\nabla^h Z_s(X_s;y)dy$$
satisfy the set of assumptions (A1, A2, C, D(i)) or (A1, A2, B). For a concrete case, see Example~\ref{expexample}. In this case, the following formula holds

$$F_t(X_t) = F_0(X_0) + \int_0^t\int_{-\infty}^{X(s)} \nabla^h Z_s(X_s;y)dyds + \int_0^t  Z_s(X_s;X(s))dX(s)  -
\frac{1}{2}\int_{-\infty}^{+\infty}\int_0^t Z_s(X_s;x)d_{(s,x)}\ell^x(s)$$
a.s. for $0\le t\le T.$
\end{example}

\

\textbf{Acknowledgment:} The authors would like Francesco Russo, Dorival Le\~ao and Estev\~ao Rosalino for stimulating discussions on the topic of this paper.

\end{document}